\newtheorem{Lemma}      {Lemma} [section]
\newtheorem{Theorem}    [Lemma] {Theorem}
\newtheorem*{Theorem2}    {Theorem 1.8}
\newtheorem{Corollary}  [Lemma] {Corollary}
\newtheorem{Proposition}[Lemma] {Proposition}
\theoremstyle{definition}
\newtheorem{Definition} [Lemma] {Definition}
\newtheorem*{Definition*}{Definition A.0.1}
\theoremstyle{definition}
\newtheorem{Remark} [Lemma] {Remark}
\newtheorem{Example}[Lemma] {Example}
\numberwithin{equation}{section}
\newcommand{\core}{\mathop{\mathrm{core}}}
\newcommand{\Stab}{\mathop{\mathrm{Stab}}}
\newcommand{\Rad}{\mathop{\mathrm{Rad}}}
\newcommand{\ab}{{a_{ab}}}
\newcommand{\anab}{{a_{nonab}}}
\newcommand{\nab}{{c_{nonab}}}
\newcommand{\Sym}{\mathop{\mathrm{Sym}}}
\newcommand{\Aut}{\mathop{\mathrm{Aut}}}
\newcommand{\Hom}{\mathop{\mathrm{Hom}}}
\newcommand{\Ker}{\mathop{\mathrm{Ker}}}
\newcommand{\Alt}{\mathop{\mathrm{Alt}}}
\newcommand{\bl}{\mathop{\mathrm{bl}}}
\newcommand{\HS}{\mathop{\mathrm{HS}}}
\newcommand{\ML}{\mathop{\mathrm{McL}}}
\newcommand{\Co}{\mathop{\mathrm{Co}}}
\newcommand{\Out}{\mathop{\mathrm{Out}}}
\newcommand{\Sub}{\mathop{\mathrm{Sub}}}
\newcommand{\Inn}{\mathop{\mathrm{Inn}}}
\newcommand{\lpp}{\mathop{\mathrm{lpp}}}
\newcommand{\as}{{a_{prim}}}
\newcommand{\ws}{{\widetilde{\omega}}}
 \DeclareMathOperator{\Sp}{Sp}
\DeclareMathOperator{\PSp}{PSp} \DeclareMathOperator{\Ps}{P}
 \newcommand{\ol}{\overline}
\numberwithin{equation}{section}
\begin{document}
\title{Minimal generation of transitive permutation groups}
\author{Gareth M. Tracey%
\thanks{Electronic address: \texttt{G.M.Tracey@warwick.ac.uk}}} 
\affil{Mathematics Institute, University of Warwick,\\Coventry CV4 7AL, United Kingdom}
\date{October 30, 2017}
\maketitle
\begin{abstract} This paper discusses upper bounds on the minimal number of elements $d(G)$ required to generate a transitive permutation group $G$, in terms of its degree $n$, and its order $|G|$. In particular, we reduce a conjecture of L. Pyber on the number of subgroups of the symmetric group $\Sym(n)$. We also prove that our bounds are best possible.\end{abstract}
\section{Introduction}
A well-developed branch of finite group theory studies properties of certain classes of permutation groups as functions of their degree. The purpose of this paper is to study the minimal generation of transitive permutation groups.

For a group $G$, let $d(G)$ denote the minimal number of elements required to generate $G$. In \cite{KovNew}, \cite{BKR}, \cite{Luc} and \cite{LucMenMor}, it is shown that $d(G)=O(n/\sqrt{\log{n}})$ whenever $G$ is a transitive permutation group of degree $n\ge 2$ (here, and throughout this paper, $``\log"$ means $\log$ to the base $2$). A beautifully constructed family of examples due to L. Kov\'{a}cs and M. Newman shows that this bound is `asymptotically best possible' (see Example \ref{TransExample}), thereby ending the hope that a bound of $d(G)=O(\log{n})$ could be proved. 

The constants involved in these theorems, however, were never estimated. We prove: 
\begin{Theorem}\label{TransTheorempre} Let $G$ be a transitive permutation group of degree $n\ge 2$. Then\begin{enumerate}[(1)]
\item $d(G)\le \left\lfloor \frac{cn}{\sqrt{\log{n}}}\right\rfloor,$where $c:=1512660\sqrt{\log{(2^{19}15)}}/(2^{19}15)=0.920581\hdots$, and;
\item $d(G)\le \left\lfloor \frac{c_{1}n}{\sqrt{\log{n}}}\right\rfloor,$
where $c_{1}:=\sqrt{3}/2=0.866025\hdots$, unless each of the following conditions hold:\begin{enumerate}[(i)]
\item $n=2^{k}v$, where $v=5$ and $17\le k\le 26$, or $v=15$ and $15\le k\le 35$, and; 
\item $G$ contains no soluble transitive subgroups.\end{enumerate}\end{enumerate}
\end{Theorem}
In fact, we prove a slightly stronger version of Theorem \ref{TransTheorempre}, which is given as Theorem \ref{TransTheorem}. The following corollary is immediate.
\begin{Corollary}\label{TransTheoremCor2} Let $G$ be a transitive permutation group of degree $n$, containing a soluble transitive subgroup. Then
$$d(G)\le \left\lfloor \frac{c_{1}n}{\sqrt{\log{n}}}\right\rfloor,$$
 where $c_{1}=\sqrt{3}/2$.\end{Corollary}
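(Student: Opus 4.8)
The plan is simply to read this off from Theorem \ref{TransTheorem}. The key observation is that the exceptional cases in that theorem are precisely those transitive groups $G$ of degree $d$ for which conditions (i), (ii) and (iii) \emph{all} hold simultaneously; for any transitive $G$ that fails even one of these three conditions, the unconditional bound $d(G)\le\lfloor cd/\sqrt{\log d}\rfloor$ with $c=\sqrt{3}/2$ applies. Now condition (ii) asserts exactly that $G$ contains no soluble transitive subgroup. Hence, if $G$ is transitive of degree $d$ and \emph{does} contain a soluble transitive subgroup, then condition (ii) fails for $G$, so $G$ is not one of the exceptional groups of Theorem \ref{TransTheorem}, and therefore $d(G)\le\lfloor cd/\sqrt{\log d}\rfloor$ with $c$ as in that theorem.

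So the entire proof consists of quoting Theorem \ref{TransTheorem} and noting that the hypothesis of the corollary is precisely the logical negation of clause (ii) of the exceptional list; there is no genuine obstacle, since the statement is a formal consequence of the main theorem. I would record it as a separate corollary only because it isolates the clean constant $c=\sqrt{3}/2$ for exactly the class of groups treated (with no explicit constant) by Bryant--Kov\'acs--Robinson \cite{BKR} and Lucchini \cite{Luc}, and makes transparent that the slightly larger constant $c_1=0.920584\ldots$ of Corollary \ref{TransTheoremCor1} is ever needed only for groups possessing \emph{no} soluble transitive subgroup and lying in the very restricted family pinned down by (i) and (iii).
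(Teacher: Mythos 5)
Your argument is correct and is exactly the paper's: the paper presents this corollary with only the remark that it ``is also immediate from Theorem \ref{TransTheorem},'' and your observation that containing a soluble transitive subgroup is precisely the negation of clause (ii) in the exceptional list supplies the one-line justification.
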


As shown in \cite{KovNew}, apart from the choice of constants, the bounds in our results are of the right order. Moreover, the infimum of the set of constants $\overline{c}$ satisfying $d(G)\le \overline{c}n/\sqrt{\log{n}}$, for all soluble transitive permutation groups $G$ of degree $n\ge 2$, is the constant $c_{1}$ in Theorem \ref{TransTheorem1}, since $d(G)=4$ when $n=8$ and $G\cong D_{8}\circ D_{8}$. We conjecture that the best `asymptotic' bound, that is, the best possible upper bound when one is permitted to exclude finitely many cases, is 
$$d(G)\le \left\lfloor \frac{\widetilde{c}n}{\sqrt{\log{n}}}\right\rfloor,$$ where $\widetilde{c}$ is some constant satisfying 
$$b/2\le \widetilde{c}<b:=\sqrt{2/\pi}$$ (see Example \ref{TransExample} for more details). 

The constant $b$, and the function $n/\sqrt{\log{n}}$, enter our work by means of the following combinatorial result. For a partially ordered set $P$, $w(P)$ denotes the \emph{width} of $P$, that is $w(P)$ denotes the size of the largest antichain in $P$.
\begin{Theorem}\label{usefulposet} Suppose that a partially ordered set $P$, of cardinality $s\ge 2$, is a cartesian product of the chains $P_{1}$, $P_{2}$, $\hdots$, $P_{t}$, where each $P_i$ has cardinality $k_i$. Let $K:=\sum_{i=1}^t k_i$. Then 
$$w(P)\le \left\lfloor\frac{s}{2^{K}}\binom{K}{\lfloor K/2\rfloor}\right\rfloor\le \left\lfloor\frac{bs}{\sqrt{K}}\right\rfloor\le\left\lfloor \frac{bs}{\sqrt{\log{s}}}\right\rfloor,$$where $b=\sqrt{2/\pi}$. Furthermore, if each chain has the same cardinality $p$, then $w(P)\le \lfloor bp^{t}/\sqrt{t(p-1)}\rfloor$.\end{Theorem}
We remark that an asymptotic version of this bound is proved in \cite[Theorem 1.4]{BKR}.

To state the key application of Theorem \ref{usefulposet}, we require two definitions.  
\begin{Definition}\label{KDef} For a positive integer $s$ with prime factorisation $s=p_{1}^{r_{1}}p_{2}^{r_{2}}\hdots p_{t}^{r_{t}}$, set\\ $\omega{(s)}:=\sum r_{i}$, $\omega_{1}{(s)}:=\sum r_{i}p_{i}$, $K(s):=\omega_{1}{(s)}-\omega(s)=\sum r_i(p_i-1)$ and 
$$\ws(s)=\frac{s}{2^{K(s)}}\binom{K(s)}{\left\lfloor\frac{K(s)}{2}\right\rfloor}.$$ \end{Definition} 
For a prime $p$, write $s_{p}$ for the $p$-part of $s$. 
\begin{Definition}\label{KDef2} Let $s$ be a positive integer, and let $p$ be prime. We define $s_p$ to be the $p$-part of $s$, $\lpp{(s)}=\max\{s_q\text{ : }q\text{ prime}\}$, and
$$E(s,p):=\min\left\{{\left\lfloor \frac{bs}{\sqrt{(p-1)\log_{p}{s_{p}}}}\right\rfloor,\frac{s}{\lpp{(s/s_{p})}}}\right\}\text{ and }E_{sol}(s,p):=\min\left\{\ws(s),s_{p}\right\}$$
where we take $\left\lfloor bs/\sqrt{(p-1)\log_{p}{s_{p}}}\right\rfloor$ to be $\infty$ if $s_{p}=1$. \end{Definition}

The mentioned application can now be given as follows.
\begin{Theorem}\label{pmodlemmaMainCorp} Let $G$ be a finite group, let $\mathbb{F}$ be a field of characteristic $p>0$, let $H$ be a subgroup of $G$, and let $V$ be an $\mathbb{F}[H]$-module, of dimension $a$. Let $S$ be the group induced by $G$ on the set of (right) cosets of $H$. Define $E'$ to be $E_{sol}$ if $S$ contains a soluble transitive subgroup, and $E':=E$ otherwise. Let $M$ be a submodule of the induced module $V\uparrow^G_H$. Then $d_G(M)\le aE'(s,p)$.\end{Theorem}

Here, $d_G(M)$ denotes the minimal number of elements required to generate $M$ as a $G$-module. We actually prove slightly stronger results than Theorem \ref{pmodlemmaMainCorp} - see Theorem \ref{BKGARTheorem} and Theorem \ref{pmodlemma}.

Our next main result is motivated by a conjecture of L. Pyber, which states that: \emph{The number of subgroups $|\Sub(\Sym(n))|$ of $\Sym(n)$ is precisely $2^{(\frac{1}{16}+o(1))n^{2}}$} \cite{PybAnnals}. For $m\in \mathbb{N}$, let $\Sub_{m}(\Sym(n))$ denote the set of subgroups $H$ of $\Sym(n)$ with the property that all $H$-orbits are of length at most $m$. J.C. Schlage-Puchta (private correspondence) has proven that if the quantity 
$$f(n):=\max\{d(G)\log{|G|}/n^{2}\text{ : }G\le \Sym(n)\text{ transitive}\}$$
approaches $0$ as $n$ tends to $\infty$, then there exists an absolute constant $\underline{c}$ such that the number of subgroups of $\Sym(n)$ is at most $2^{o(n^{2})}\Sub_{\underline{c}}(\Sym(n))$. This reduces Pyber's conjecture to counting the number of subgroups that have all orbit lengths bounded above by $\underline{c}$.

Motivated by this, we prove the following.
\begin{Theorem}\label{TransOrderTheorem} There exists an absolute constant $C$ such that
$$d(G)\le \left\lfloor \dfrac{Cn^2}{\log{|G|}\sqrt{\log{n}}}\right\rfloor$$
whenever $G$ is a transitive permutation group of degree $n\ge 2$.\end{Theorem}

In particular, the discussed reduction of Pyber's conjecture follows. We remark that the bound in Theorem \ref{TransOrderTheorem} is `asymptotically best possible'. See Example \ref{TransExample} for more details.

Finally, we also discuss minimally transitive permutation groups. A transitive permutation group $G$ is said to be \emph{minimally transitive} if every proper subgroup of $G$ is intransitive. Since every transitive group contains a minimally transitive subgroup, these groups arise naturally in reduction arguments.

Minimally transitive groups also have applications in Combinatorics (for counting vertex transitive graphs; for example, see \cite{BabaiSos}), and in the theory of BFC-groups (see \cite{NVL} and \cite{ShepWie}). In this paper, we use them to study minimal generator numbers in modules for permutation groups.  Thus, some information on their structure is desirable. Our main result is as follows.
\begin{Theorem}\label{MinTransTheorem1} Let $G$ be a minimally transitive permutation group of degree $n=2^m3$. Then one of the following holds:\begin{enumerate}[(i)]
\item $G$ is soluble, or:
\item $G$ has a unique nonabelian chief factor, which is a direct product of copies of $L_{2}(p)$, where $p$ is a Mersenne prime.
\end{enumerate}\end{Theorem}

A minimally transitive group of prime power degree is a $p$-group (see Lemma \ref{MinTransLemma1}), so is in particular soluble; the motivation behind Theorem \ref{MinTransTheorem1} is to study how far away from being soluble a minimally transitive group of degree $n:=2^{m}3$ is. It would be interesting to study the same question for minimally transitive groups of degree $n:=p^{m}q$, for arbitrary primes $p$ and $q$. For an analysis of the case $n=pq$, for distinct primes $p$ and $q$, see \cite{Suppq}, \cite{Koppq} and \cite{DallaVolta}. 

For information about minimal generator numbers in minimally transitive groups, see \cite{GT}.

The layout of the paper is as follows: In Section 2, we discuss preliminary results in Permutation Group Theory and Representation Theory. In Section 3 we discuss minimally transitive groups and prove Theorem \ref{MinTransTheorem1}. Section 4 is the critical step of the paper: there, we prove upper bounds on the minimal number of elements $d_G(M)$ required to generate a submodule $M$ of an induced module $V\uparrow^G_H$ for a finite group $G$, and a subgroup $H\le G$. These bounds are derived in terms of $\dim{V}$, $|G:H|$, and some additional data when either the field involved is finite, or when $G$ is insoluble. In particular, we prove Theorem \ref{pmodlemmaMainCorp}. We also prove Theorem \ref{usefulposet} in Section 4. In Section 5, we prove a stronger version of Theorem \ref{TransTheorem}, while in Section 6 we prove Theorem \ref{TransOrderTheorem}.

Our proofs are theoretical, although we do use MAGMA \cite{MAGMA} for computations of generator numbers and composition factors for some groups of small order. In particular, we compute the maximum values of $d(G)$ as $G$ runs over the transitive groups of degree $n$, for $2\le n\le 32$.

{\bf Notation:} The following is a table of constants which will be used throughout the paper.
\begin{center}\begin{tabular}{|c|c|}
  \hline
  $b$ & $\sqrt{2/\pi}=0.797885\hdots$ \\
  \hline
  $b_{1}$ & $\sqrt{2}b=1.12838\hdots$\\
  \hline
  $c_{1}$ & $\sqrt{3}/2=0.866025\hdots$\\
  \hline
  $c$ & $1512660\sqrt{\log{(2^{19}15)}}/(2^{19}15)=0.920581\hdots$\\
  \hline
  $c_{0}$ & $\log_{9}{48}+(1/3)\log_{9}{24}=2.24399\hdots$\\
  \hline
  $c'$ & $\ln{2}/1.25506=0.552282\hdots$\\
  \hline
\end{tabular}\end{center} 
We will adopt the $\mathbb{ATLAS}$ \cite{Atlas} notation for group names, although we will usually write $\Sym(n)$ and $\Alt(n)$ for the symmetric and alternating groups of degree $n$. Furthermore, these groups, and their subgroups, act naturally on the set $\{1,\hdots,n\}$; we will make no further mention of this.

The centre of a group $G$ will be written as $Z(G)$, the Frattini subgroup as $\Phi(G)$, and the Fitting subgroup as $F(G)$. The letters $G$, $H$, $K$ and $L$ will usually be used for groups, while $U$, $V$ and $W$ will usually be modules. The letter $M$ will usually denote a submodule. Finally, group homomorphisms will be written on the right.

We finish by recording a definition which will be used throughout the paper.
\begin{Definition} Let $G$ be a group.\begin{enumerate}[(a)]
\item Write $a(G)$ to denote the composition length of $G$.
\item Let $\ab(G)$ and $\anab(G)$ denote the number of abelian and non-abelian composition factors of $G$, respectively. 
\item Let $\nab(G)$ denote the number of non-abelian chief factors of $G$.
\end{enumerate}\end{Definition}
 
The author is hugely indebted to his supervisor Professor Derek Holt for many useful discussions and suggestions; without them, this paper would not be possible.  He would also like to thank both Dr. Tim Burness and the referee for many useful comments and suggestions. Finally, he would also like to thank the Engineering and Physical Sciences Research Council for their financial support.

\section{Preliminaries}\label{IntroChapter}
\subsection{Permutation groups}
We begin with some notation. Suppose that $G$ is a group acting on a set $\Omega$, via the homomorphism $\theta:G\rightarrow \Sym(\Omega)$. When there is no ambiguity, we will abbreviate $\omega^{g\theta}$ to $\omega^{g}$, for $g\in G$, $\omega\in \Omega$. We will also write 
$$G^{\Omega}:=G\theta\text{, and }{\Ker}_{G}(\Omega):=\Ker(\theta)$$ 
to denote the image and kernel of $\theta$, respectively. The orbit $\omega^{G\theta}$ of $\omega\in \Omega$ under the action of $G$ will be abbreviated to $\omega^{G}$, while the stabiliser will be written as $\Stab_{G}(\omega)$. If $\Omega$ is finite of cardinality $n$, we have
$$(\Sym(\Omega),\Omega)\cong (\Sym(n),\{1,\hdots,n\}).$$
Thus, in this case, we will usually write $\Sym{(\Omega)}=\Sym{(n)}=S_n$, and say that a subgroup $G\le \Sym(\Omega)$ is a \emph{permutation group of degree $n$}. If, for $1=1$, $2$, $G_i$ is a group acting on the set $\Omega_i$, we will write $(G_1,\Omega_1)\cong (G,\Omega_2)$ if $(G_1,\Omega_1)\cong (G,\Omega_2)$ are permutation isomorphic.

Let $\omega_{i}^{G}$,  $i\in I$, denote the orbits in $\Omega$ under the action of $G$ (the set $I$ is an index set). The groups $G^{\omega_{i}^{G}}$ are called the \emph{transitive constituents} of $G$ on $\Omega$. 

\begin{Definition} Let $G_i$, $i\in I$, be a set of groups. A subgroup $G$ of the direct product $\prod_i G_i$ is called a \emph{subdirect product} of the $G_i$ if $\pi_i|_{G}:G\rightarrow G_i$ is surjective for each projection map $\pi_i:\prod_i G_i\rightarrow G_i$.\end{Definition}

We note the following easily proved proposition, which will be used frequently.
\begin{Proposition}[{\bf \cite{CamPerm}, Theorem 1.1}]\label{SubdirectProp} Let the group $G$ act on the finite set $\Omega$. Then $G^{\Omega}$ is isomorphic to a subdirect product of its transitive constituents.\end{Proposition}

\subsection{Wreath products}\label{TransIntro}
Let $R$ be a finite group, let $S$ be a permutation group of degree $s$, and consider the wreath product $R\wr S$, as constructed in \cite{CamPerm}. Let $B$ be the base group of $R\wr S$, so that $B$ is isomorphic to the direct product of $s$ copies of $R$. Thus, for a subgroup $L$ of $R$, $B$ contains the direct product of $s$ copies of $L$: we will denote this direct product by $B_{L}$ (so that $B_{1}=1$ and $B_{R}=B$). 

Now, for each $1\le i\le s$, set 
$$R_{(i)}:=\{(g_1,\hdots,g_s)\in B\text{ : }g_j=1\text{ for all }j\neq i\}\unlhd B.$$ Then $R_{(i)}\cong R$, and $B=\prod_{1\le i\le s} R_{(i)}$. Furthermore, $N_{R\wr S}(R_{(i)})\cong R_{(i)}\times (R\wr \Stab_{S}(i))$. Hence, we may define the projection maps 
\begin{align}\rho_{i}:N_{R\wr S}(R_{(\gamma)})\rightarrow R_{(i)}.\end{align} We also define $\pi:R\wr S\rightarrow S$ to be the quotient map by $B$. This allows us to define a special class of subgroups of $R\wr S$.
\begin{Definition}\label{LargeDef} A subgroup $G$ of $R\wr S$ is called \emph{large} if \begin{enumerate}[(a)]
\item $G\rho_{i}=R_{(i)}$ for all $i$ in $1\le i\le s$, and;
\item $G\pi=S$.\end{enumerate}\end{Definition}

\begin{Remark}\label{WreathBlockRemark} Suppose, in addition, that $R$ is a permutation group of degree $r>1$. If $s>1$ and $G$ is a large subgroup of $R\wr S$, then $G$ is a transitive, and imprimitive, permutation group of degree $rs$, with a system of $s$ blocks, each of cardinality $r$. ($G$ acts on the cartesian product $\{1,\hdots,r\}\times \{1,\hdots,s\}$ in this case. \end{Remark}
In fact, it turns out that all imprimitive permutation groups arise as a large subgroup of a certain wreath product. 

\begin{Theorem}[{\bf \cite{Sup}, Theorem 3.3}]\label{SupPerm} Let $G$ be an imprimitive permutation group on a set $\Omega_1$, and let $\Delta$ be a block for $G$. Also, let $\Gamma:=\Delta^{G}$ be the set of $G$-translates of $\Delta$, and set $\Omega_2:=\Delta\times \Gamma$. Denote by $R$ and $S$ the permutation groups $\Stab_{G}(\Delta)^{\Delta}$, and $G^{\Delta^{G}}$, on $\Delta$ and $\Gamma$ respectively. Then\begin{enumerate}[(i)]
\item $G\cong G^{\Omega_{2}}$ is isomorphic to a large subgroup of $R\wr S$, and;
\item $(G,\Omega_{1})$ and $(G,\Omega_{2})$ are permutation isomorphic.\end{enumerate}\end{Theorem}

If $G$ is an imprimitive permutation group, and the block $\Delta$ as in Theorem \ref{SupPerm} is assumed to be a minimal block for $G$, then the group $R=\Stab_{G}(\Delta)^{\Delta}$ is primitive. When $\Omega$ is finite we can iterate this process, and deduce the following.
\begin{Corollary}\label{PrimCom} Let $G$ be a transitive permutation group on a finite set $\Omega$. Then there exist primitive permutation groups $R_{1}$, $R_{2}$, $\hdots$, $R_{t}$ such that $G$ is a subgroup of $R_{1}\wr R_{2}\wr \hdots\wr R_{t}$.\end{Corollary}

\begin{Remark}\label{WreathRemark} The wreath product construction is associative, in the sense that $R\wr (S\wr T)\cong (R\wr S)\wr T$, so the iterated wreath product in Corollary \ref{PrimCom} is well-defined.\end{Remark}

\begin{Definition} The tuple $(R_{1},R_{2},\hdots,R_{t})$, where the $R_i$ are as in Corollary \ref{PrimCom}, is called \emph{a tuple of primitive components} for $G$ on $\Omega$.\end{Definition}

We caution the reader that a tuple of primitive components for an imprimitive permutation group $G$ on a set $\Omega$ is not necessarily unique - see \cite[Page 13]{CamPerm} for an example.

We close this subsection with an easy lemma concerning the alternating group $\Alt(d)$.
\begin{Lemma}\label{AltOrbits} Let $D\cong \Alt(d)$ be the alternating group of degree $d\ge 5$, and let $p$ be prime. Then $D$ contains a soluble subgroup $E$ with at most two orbits, such that each orbit has $p'$-length.\end{Lemma}
\begin{proof} Assume first that $p=2$. Then since $n$ is either odd, or a sum of two odd numbers, we can take $E:=\langle x_{1}x_{2}\rangle$, where $x_{1}$ is a cycle of odd length, either $x_{2}=1$ or $x_{2}$ is a cycle of odd length, and $d$ is the sum of the orders (i.e. lengths) of $x_{1}$ and $x_{2}$.

So assume that $p>2$, and write $d=tp+k$, where $0\le k\le p-1$. If $k\neq p-1$, then take $E_{1}$ to be a soluble transitive subgroup of $\Alt(tp-1)$, and take $E_{2}$ to be a soluble transitive subgroup of $\Alt(k+1)$. If $k=p-1$, then take $E_{1}$ to be a soluble transitive subgroup of $\Alt(tp+1)$, and take $E_{2}$ to be a soluble transitive subgroup of $\Alt(k-1)$ (note that $k-1>0$ since $p>2$). Finally, taking $E:=E_{1}\times E_{2}\le D$ give us what we need, and proves the claim. \end{proof}

\subsection{Asymptotic results for permutation groups}
We will frequently use a result on composition length, due to Pyber. First, define the constant
\begin{align} c_0:=\log_{9}{48}+(1/3)\log_{9}{24}=2.24399\hdots \end{align}

The result of Pyber can now be given as follows. It is stated slightly different to how it is stated in \cite{Pyb}.
\begin{Theorem}[{\bf\cite{Pyb}, Theorem 2.10}]\label{pyber} Let $R$ be a primitive permutation group of degree $r\ge 2$. Then $\ab{(R)}\le (1+c_{0})\log{r}-(1/3)\log{24}$, and $\anab{(R)}\le \log{r}$.\end{Theorem}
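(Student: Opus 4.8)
The statement is a reformulation of \cite{Pyb}, Theorem~2.10, so in principle one need only verify that the estimate proved there implies the two displayed inequalities; my plan is to indicate the structure of the underlying argument, which runs through the O'Nan--Scott classification of primitive groups and treats affine and non-affine $R$ separately.

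\emph{Affine case.} Here $R = V\rtimes H$ with $V\cong \mathbb{F}_{p}^{n}$, $m = |V| = p^{n}$, and $H\le GL_{n}(p)$ irreducible. When $H$ is soluble, $R$ is a soluble primitive group of degree $m$, every composition factor is abelian, and the crude estimate $\ab{(R)} = a(R)\le \log|R|$ combined with P\'alfy's explicit order bound for soluble completely reducible linear groups — whose extremal configurations are built from symplectic-type normalisers over small fields, e.g.\ $GL_{2}(3)$ and $SL_{2}(3)$ with $|GL_{2}(3)| = 48$ and $|SL_{2}(3)| = 24$, which is precisely where the constant $c_{0} = \log_{9}48 + (1/3)\log_{9}24$ and the additive term $-(1/3)\log 24$ originate — yields $\ab{(R)}\le (1+c_{0})\log m - (1/3)\log 24$, while $\nab{(R)} = 0$. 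For general $H$ one passes to the generalised Fitting subgroup: $\ab{(R)}$ is controlled by the soluble part via an extension of P\'alfy's bound to completely reducible groups, and $\nab{(R)} = \nab{(H)}$ is bounded by observing that the quasisimple components of the layer $E(H)$ each require at least two dimensions of faithful action (in a direct-sum or tensor arrangement), so the number of nonabelian chief factors is at most $n\le n\log p = \log m$.

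\emph{Non-affine case.} Now $\Soc(R) = T^{k}$ with $T$ nonabelian simple and $R/\Soc(R)\le \Out(T)\wr S_{k}$. The relevant O'Nan--Scott types force $m$ to be very large relative to $k$ and $|T|$: in the almost simple case $m\ge 5$ and $k = 1$; in product action $m = m_{0}^{k}$ with $m_{0}\ge 5$; in diagonal, twisted-wreath and related types $m$ is a large power of $|T|\ge 60$. Meanwhile $\Soc(R)$ has no abelian composition factors, $\Out(T)$ is soluble (Schreier's conjecture), and the top group, a transitive subgroup of $S_{k}$, contributes at most one nonabelian chief factor per orbit on the $k$ simple direct factors, together with a controlled number of factors $C_{2}$. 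Comparing these bounded quantities against $\log m$, respectively against $(1+c_{0})\log m - (1/3)\log 24$, gives both inequalities with room to spare, once a handful of small degrees are checked directly.

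\emph{Main obstacle.} The hard part is the affine case, and specifically the requirement that the estimate be \emph{sharp}: one cannot lose a multiplicative constant, so the composition length of soluble completely reducible subgroups of $GL_{n}(p)$ must be tracked essentially optimally. This is exactly the content of the P\'alfy--Wolf analysis and the reason the unusual constant $\log_{9}48 + (1/3)\log_{9}24$ appears; by contrast, the non-affine case is soft and loses a great deal.
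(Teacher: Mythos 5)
This statement is quoted from the literature (Pyber, Theorem~2.10) and the paper offers no proof of it; it is simply cited, with a remark that it is ``stated slightly differently'' from Pyber's original. There is therefore no in-paper argument against which your reconstruction can be checked, and the task here is really just to verify that the cited inequalities are a correct reading of Pyber's theorem, not to reprove it.

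Your sketch is a plausible reconstruction of the shape of Pyber's argument, and you have correctly identified the source of the constant $c_{0} = \log_{9}48 + (1/3)\log_{9}24$: it is the P\'alfy constant from the bound $|G|\le 24^{-1/3}n^{1+c_{0}}$ for soluble primitive groups of degree $n$, and the inequality $\ab(R)\le\log|R|$ then transfers this directly. Two caveats. First, in the affine case with $H$ insoluble, the step ``$\nab(H)\le n\le n\log p=\log m$'' is asserted via a heuristic about quasisimple components each needing two dimensions; this is not a proof, and in fact the correct argument (which you would need to supply) is somewhat more delicate since the nonabelian chief factors need not be detected dimension-by-dimension in a direct-sum decomposition. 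Second, the statement as reproduced in the paper has a typo --- $\nab(R)\le\log n$ should read $\nab(R)\le\log m$, since $m$ is the degree and no $n$ has been introduced; your proposal silently corrects this without flagging it. Neither point affects the judgment that this is an externally cited black box, so a gap in your sketch here is not a gap relative to the paper.
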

We shall also require the following theorem of D. Holt and C. Roney-Dougal on generator numbers in primitive groups.
\begin{Theorem}[{\bf\cite{derek}, Theorem 1.1}]\label{derekthm} Let $H$ be a subnormal subgroup of a primitive permutation group of degree $r$. Then $d(H)\le\lfloor\log{r}\rfloor$, except that $d(H)=2$ when $m=3$ and $H\cong \Sym({3})$.\end{Theorem}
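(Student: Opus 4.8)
If $H=1$ then $d(H)=0\le\lfloor\log m\rfloor$, so we may assume $H\ne 1$ and fix a primitive group $G$ of degree $m$ containing $H$ subnormally. The plan is to split on the O'Nan--Scott type of $G$ and to exploit the fact that a nontrivial subnormal subgroup of $G$ is tightly constrained by the socle $N:=\Soc(G)$. Two standard facts carry most of the bookkeeping. First, if $N=T_{1}\times\dots\times T_{k}$ is a product of nonabelian simple groups then the subnormal subgroups of $N$ are precisely the coordinate products $\prod_{i\in I}T_{i}$, so $H\cap N\cong T^{|I|}$ for some $I\subseteq\{1,\dots,k\}$; an analogous statement, via Clifford's theorem (subnormal subgroups of completely reducible linear groups are completely reducible), controls $H\cap N$ when $N$ is elementary abelian. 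Second, by the Schreier property $G/N$ is soluble with $d(G/N)=O(1)$, hence $d(H/(H\cap N))=O(1)$. Combining $d(H)\le d(H\cap N)+d(H/(H\cap N))$ with Hall's formula --- which gives $d(T^{j})\le 2+\log j$, and $d(T^{j})=2$ once $j$ is below a bound depending only on $|T|$ --- reduces each type to a numerical inequality in the structural parameters.

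In type AS ($k=1$) this gives $d(H)\le d(H\cap N)+O(1)$, and since a faithful primitive action of an almost simple group has degree $m\ge 5$ with $\lfloor\log m\rfloor\ge 2$, the only remaining cases are those with $m$ small, which I would settle against the database of primitive groups of small degree. In the diagonal types SD and CD, in the twisted-wreath type TW, and in the holomorph types HS and HC, the socle is regular or acts with large orbits, so $m\ge|T|^{\,k-1}$ or $m=|T|^{\,k}$ with $|T|\ge 60$; then $\lfloor\log m\rfloor$ dominates the crude bound $d(H\cap N)+d(H/(H\cap N))\le(2+\log k)+O(1)$ once the parameters exceed a small threshold, the finitely many remaining $(|T|,k)$ being verified directly (some are tight --- e.g.\ $T=A_{5}$, $k=2$ in type SD, where $m=60$ and $\lfloor\log 60\rfloor=5$ --- but none is an exception). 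In the product-action type PA, $G\le L\wr\Sym(k)$ with $L$ primitive of degree $m_{1}\ge 5$ and $m=m_{1}^{\,k}$, so $\log m\ge k\log 5$ and the same analysis applies, once more with the finitely many small $(m_{1},k)$ checked by hand.

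The substantive case, and the one containing the exception, is the affine type HA: here $G=V\rtimes G_{0}$ with $V=\mathbb{F}_{p}^{\,n}$, $m=p^{\,n}$, $G_{0}\le\mathrm{GL}(n,p)$ irreducible, and $\lfloor\log m\rfloor=\lfloor n\log p\rfloor\ge n$. Writing $U:=H\cap V$ and $\overline{H}:=HV/V$, one checks that $\overline{H}$ is a subnormal --- hence, by Clifford's theorem, completely reducible --- subgroup of $G_{0}$ and that $U$ is an $\overline{H}$-submodule of $V$, so $d(H)\le d(\overline{H})+d_{\overline{H}}(U)$; it therefore suffices to prove $d(\overline{H})+d_{\overline{H}}(U)\le\lfloor n\log p\rfloor$ for every completely reducible $\overline{H}\le\mathrm{GL}(n,p)$ and every submodule $U\le V$. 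This is the crux, and I expect it to be by far the hardest part: it requires a sufficiently sharp bound on the generator number of a completely reducible linear group (of the kind established by Kov\'{a}cs--Robinson and by refinements of the arguments behind \cite{BKR}), balanced against a bound on the number of $\overline{H}$-module generators of $U$, followed by a finite computation over the pairs $(n,p)$ with $n\log p$ small. That computation yields exactly one violation, namely $n=1$, $p=3$, $\overline{H}=G_{0}=C_{2}$ and $H=G=S_{3}$, where $d(H)=2>1=\lfloor\log 3\rfloor$ --- the stated exception --- and confirms the bound in every other small case, which would complete the proof.
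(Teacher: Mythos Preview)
The paper does not prove this statement at all: Theorem~\ref{derekthm} is quoted verbatim from Holt and Roney-Dougal \cite{derek} and used as a black box (it is followed immediately by ``We also have the following easy consequence'', namely Corollary~\ref{derek}). There is therefore no proof in the paper to compare your proposal against.

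As for the proposal itself, the O'Nan--Scott case split is indeed the natural strategy, and is broadly what \cite{derek} does, but your sketch has real gaps. The assertion that ``by the Schreier property $G/N$ is soluble with $d(G/N)=O(1)$, hence $d(H/(H\cap N))=O(1)$'' is not correct outside the almost simple case: when $k>1$ the quotient $G/N$ embeds in $\Out(T)\wr S_{k}$, which is neither soluble nor of bounded generator number, and even when $G/N$ is soluble a subnormal (or arbitrary) subgroup can require more generators than the ambient group. You need a genuine argument bounding $d(H/(H\cap N))$ in terms of $k$, not an $O(1)$. More seriously, in the affine case you correctly identify the crux as a sharp inequality $d(\overline{H})+d_{\overline{H}}(U)\le\lfloor n\log p\rfloor$ for completely reducible $\overline{H}\le\mathrm{GL}(n,p)$, but then simply assert that this ``requires a sufficiently sharp bound \dots\ of the kind established by Kov\'{a}cs--Robinson'' without supplying one. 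That inequality is essentially the content of the theorem in the hardest case, and the existing literature bounds (e.g.\ those in \cite{BKR}) are not tight enough on their own; \cite{derek} devotes substantial work to this. So your proposal is a plausible outline of the architecture of a proof, but it is not a proof.
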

We deduce the following easy consequence.
\begin{Corollary}\label{derek} Let $G$ be an imprimitive permutation group of degree $n$, and suppose that $G$ has a minimal block $\Delta$ of cardinality $r\ge 4$. Let $S$ denote the induced action of $G$ on the set of distinct $G$-translates of $\Delta$. Then $d(G)\le s\lfloor\log{r}\rfloor+d(S)$, where $s:=n/r$.\end{Corollary}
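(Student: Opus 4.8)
The plan is to peel off the block structure and reduce everything to the Holt--Roney-Dougal theorem (Theorem \ref{derekthm}) applied to the actions induced on individual blocks. Write $\Delta=\Delta_{1},\ldots,\Delta_{n}$ for the $G$-translates of $\Delta$, so that these partition the underlying set into $n$ blocks of size $m$, and let $K$ be the kernel of the action of $G$ on $\{\Delta_{1},\ldots,\Delta_{n}\}$. Then $G/K\cong S$, and since preimages of a generating set of $G/K$ together with a generating set of $K$ generate $G$, we have $d(G)\le d(K)+d(S)$. It therefore suffices to prove $d(K)\le n\lfloor\log{m}\rfloor$.

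To bound $d(K)$, first note that because $\Delta$ is a \emph{minimal} block, $\Stab_{G}(\Delta)$ induces a primitive group of degree $m$ on $\Delta$ --- a nontrivial block of that induced action would be a $G$-block properly between a point and $\Delta$, contradicting minimality --- and the same holds for every translate $\Delta_{j}$. Now put $K_{0}:=K$ and, for $1\le j\le n$, let $K_{j}$ be the subgroup of elements of $K_{j-1}$ acting trivially on $\Delta_{j}$; this is meaningful since $K_{j-1}\le K\le\Stab_{G}(\Delta_{j})$, and $K_{j}\trianglelefteq K_{j-1}$. Then $K_{n}=1$ (an element fixing every block pointwise fixes the whole set), and $K_{j-1}/K_{j}$ is isomorphic to the permutation group that $K_{j-1}$ induces on $\Delta_{j}$. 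Since $K_{0}=K\trianglelefteq G$, the chain $K_{n}\trianglelefteq\cdots\trianglelefteq K_{0}\trianglelefteq G$ shows that each $K_{j-1}$ is subnormal in $G$; as $K_{j-1}\le\Stab_{G}(\Delta_{j})$, it is then subnormal in $\Stab_{G}(\Delta_{j})$, and hence its image on $\Delta_{j}$ is subnormal in the primitive group of degree $m$ induced by $\Stab_{G}(\Delta_{j})$. Theorem \ref{derekthm} gives $d(K_{j-1}/K_{j})\le\lfloor\log{m}\rfloor$, and using $d(H)\le d(H/N)+d(N)$ repeatedly down the chain yields $d(K)\le\sum_{j=1}^{n}d(K_{j-1}/K_{j})\le n\lfloor\log{m}\rfloor$, which combined with the first paragraph gives the claim.

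The one genuinely delicate point is the exceptional clause in Theorem \ref{derekthm}: when $m=3$ and $K_{j-1}/K_{j}\cong S_{3}$, the bound for that factor is $2$ rather than $\lfloor\log{3}\rfloor=1$, so the naive telescoping only gives $d(K)\le 2n$. For $m=2$ this cannot arise, and in fact $K\le C_{2}^{n}$ outright; for $m\ge 4$ Theorem \ref{derekthm} has no exception, so the telescoping is already clean. For $m=3$ one observes that if some $K_{j-1}/K_{j}\cong S_{3}$ then $K$, being normal in the block-transitive group $G$, acts as $S_{3}$ on \emph{every} block, so $K$ is a subdirect subgroup of $S_{3}^{n}$; a direct argument for such subgroups (via the $C_{2}^{n}$-module structure of $C_{3}^{n}$, or a crown/Gasch\"utz-type count) still gives $d(K)\le n=n\lfloor\log{3}\rfloor$, and in any case this small-block situation is also covered by the separate treatment of minimal block size at most $9$. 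The remaining ingredients --- that subnormality passes to intersections with intermediate subgroups and to homomorphic images, and that a block of a block action is a block --- are entirely routine.
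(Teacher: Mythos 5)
Your decomposition $d(G)\le d(K)+d(S)$ together with the block-wise filtration $K=K_{0}\geq K_{1}\geq\cdots\geq K_{n}=1$ (each $K_{j}$ normal in $K_{j-1}$) is exactly the paper's argument: the paper says $K$ is an ``iterated subdirect product'' of copies of $A_{K}$ and appeals to $A_{K}\unlhd R$ to make each factor subnormal in the primitive $R$, whereas you track subnormality in $G$ down the chain and pass to $\Stab_{G}(\Delta_{j})$; both are valid. You are also right to flag the $m=3$ exception in Theorem \ref{derekthm}, which the paper's own one-line proof passes over: when $A_{K}\cong S_{3}$, the statement that ``each normal subgroup of $A_{K}$ can be generated by $\lfloor\log m\rfloor$ elements'' is simply false, since $A_{K}$ is normal in itself and $d(S_{3})=2$.

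However, your proposed repair for $m=3$ is wrong. It is not true that a subdirect subgroup $K\le S_{3}^{n}$ satisfies $d(K)\le n$. Take $K$ to be the preimage, under the coordinate-wise sign map $S_{3}^{n}\to C_{2}^{n}$, of the diagonal $C_{2}$, so that $K=C_{3}^{n}\rtimes\langle\sigma\rangle$ with $\sigma$ inverting every coordinate. Given any generating set, after multiplying by suitable powers of one generator carrying $\sigma$, the remaining generators lie in $C_{3}^{n}$; since conjugation by $\sigma$ only inverts, the intersection of the generated subgroup with $C_{3}^{n}$ is spanned by those remaining generators alone, forcing $d(K)=n+1>n\lfloor\log 3\rfloor$ (already $d(K)=3$ when $n=2$). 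So the clean telescoping genuinely fails at $m=3$, and the ``in any case'' fallback you offer is a remark about Theorem \ref{TransTheorem}, not a proof of this corollary. In mitigation, the paper only ever applies Corollary \ref{derek} with $m\ge 4$ (it is (6.1) in Lemma \ref{48}, used there only for $m>16$, and (6.8) in the main proof, stated for $m\ge 4$), where Theorem \ref{derekthm} has no exception and both your argument and the paper's are complete. A genuine fix for $m=3$ would need to replace $d(K)$ by $d_{G}(K)$, the minimal number of elements of $K$ whose normal closure in $G$ is all of $K$, and exploit transitivity of $G$ on the blocks to bound it; in the example above $d_{G}(K)\le 2$.
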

\begin{proof} Let $R$ be the induced action of the block stabiliser $\Stab_G({\Delta})$ on $\Delta$, and let $K:=\Ker_G(\Omega)$ be the kernel of the action of $G$ on the set $\Omega$ of distinct $G$-translates of $\Delta$. Then $K^\Delta\unlhd R$, and hence, by Theorem \ref{derekthm}, each normal subgroup of $K^\Delta$ can be generated by $\lfloor\log{r}\rfloor$ elements.  

Since $K\unlhd G$, we have \begin{align}\label{mop}(K,\Delta)\cong (K,\Delta^g) \end{align}
for all $g\in G$. Also, since $R$ is primitive, $K^{\Delta}\unlhd R$ is either trivial or transitive. If $K^{\Delta}$ is trivial, then $K$ is trivial by \ref{mop}, and hence $d(G)=d(G/K)=d(S)$. So assume that $K^{\Delta}$ is transitive. Then $K$ is an iterated subdirect product of $s$ copies of $K^{\Delta}$, by Proposition \ref{SubdirectProp}. Hence, $d(K)\le s\lfloor\log{r}\rfloor$ by the previous paragraph. Since $G/K\cong S$, the claim follows.\end{proof}

\subsection{Some results from Representation Theory}\label{RepTheorySec}
We now record two lemmas which will be key in the proof of Proposition \ref{IrrProp}. The first has a stronger version which is stated in \cite[Lemma 2.13]{derek}, but we only require the following.
\begin{Lemma}[{\bf\cite{derek}, Lemma 2.13}]\label{213} Let $G\le GL_{n}(\mathbb{F})$ be finite, let $V=\mathbb{F}^n$ be the natural module, and assume that $G$ acts irreducibly on $V$. Suppose that\begin{enumerate}
\item $V\downarrow_L$ is homogeneous for each normal subgroup $L$ of $G$; and
\item $G$ has no non-trivial abelian quotients.\end{enumerate}
Then $G$ is isomorphic to a subgroup of $GL_{n/f}(\mathbb{K})$ for some divisor $f$ of $n$, and some extension field $\mathbb{K}$ of $\mathbb{F}$ of degree $f$. Furthermore, if $W$ denotes the natural module for $GL_{n/f}(\mathbb{K})$, then $G$ acts irreducibly on $W$ and\begin{enumerate}[(i)]
\item $W\downarrow_L$ is homogeneous for each normal subgroup $L$ of $G$;
\item $Z(G)$ is cyclic; and
\item Each abelian characteristic subgroup of $G$ is contained in $Z(GL_{n/f}(\mathbb{K}))$.
\end{enumerate}\end{Lemma}

\begin{Lemma}\label{FieldExt} Let $G\le GL_{n}(\mathbb{F})$ be finite, let $V$ be the natural module, and assume that $V$ is irreducible. Suppose that $1\neq E\unlhd L\unlhd G$, and that $V\downarrow_L$ is homogeneous. Suppose that $\mathbb{K}\supseteq \mathbb{F}$ is a splitting field for all subgroups of $L$, and assume that the resulting extension $\mathbb{K}/\mathbb{F}$ is normal. Then $V^\mathbb{K}\downarrow_E$ is a non-trivial completely reducible $\mathbb{K}[E]$-module.\end{Lemma}
\begin{proof} Since $L$ is homogeneous, $V\downarrow_L\cong eU$, for some irreducible $\mathbb{F}[L]$-module $U$ and some positive integer $e$. Since $G$ is faithful on $V$ and $L\neq 1$, $L$ is faithful on $U$. Moreover, $U^{\mathbb{K}}$ is completely reducible, and each of its irreducible constituents are algebraically conjugate, by \cite[Theorem 70.15]{CurtisReiner}. It follows that $L$ is faithful on $V^{\mathbb{K}}\downarrow_{L}$, and hence $V^{\mathbb{K}}\downarrow_{E}$ is non-trivial. Also, since $E\unlhd L$, and 
$$V^{\mathbb{K}}\downarrow_{E}\cong V^{\mathbb{K}}\downarrow_{L}\downarrow_{E},$$
it follows from Clifford's Theorem (see \cite[Theorem 49.7]{CurtisReiner}) that $V^{\mathbb{K}}\downarrow_E$ is completely reducible. This completes the proof.\end{proof} 

\begin{Remark}\label{FIELDEXT} Let $\mathbb{K}$ be a splitting field for the finite group $G$, containing the field $\mathbb{F}$. Then every field $\mathbb{E}$ containing $\mathbb{K}$ is also a splitting field for $G$ (for example, see \cite[Corollary 9.8]{Isaacs}). Thus, one can always find a splitting field $\mathbb{E}$ for $G$ such that $\mathbb{E}/\mathbb{F}$ is a normal extension (for instance, by taking $\mathbb{E}$ to be the normal closure of $\mathbb{K}/\mathbb{F}$).\end{Remark}    
 
\subsection{Number Theory: The prime counting function}\label{NTSection}
We close this section with a brief discussion of large prime power divisors of positive integers. 
\begin{Definition} For a positive integer $s$ and a prime $p$, write $s_{p}$ for the $p$-part of $n$. Also, define $\lpp{s}=\max_{p\text{ prime}} s_{p}$ to be the the largest prime power divisor of $s$.\end{Definition}

Fix $s\ge 2$, and let $k=\lpp{s}$. By writing the prime factorization of $s$ as $s=kp_{2}^{r_{2}}\hdots p_{t}^{r_{t}}$, one immediately sees that $s\le k^{\delta(k)}$, where $\delta(k)$ denotes the number of primes less than or equal to $k$. Hence, $\log{s}\le \delta(k)\log{k}$. Also, it is proved in \cite[Corollary 1]{Ros} that
$$\delta(k)< 1.25506k/\ln{k}$$
for $k\ge 2$. Define the constant $c'$ by
\begin{align} c':=\ln{2}/1.25506\end{align}
We deduce the following.
\begin{Lemma}\label{primecount} Let $s$ be a positive integer. Then
$$\lpp{s}\ge (\ln{2}/1.25506)\log{s}=c'\log{s}.$$\end{Lemma}
\section{Minimally transitive groups of degree $2^m3$}\label{MinTransChapter1}
We begin our work towards the proof of Theorem \ref{TransTheorem} with a discussion of minimally transitive permutation groups. As mentioned in Section 1, we use these groups to study minimal generator numbers in modules for permutation groups. Specifically, if $H\le G$ are finite groups, $V$ is a $G$-module, and $\widetilde{G}$ is a subgroup of $G$ acting transitively on the set $H\backslash G$ of right cosets of $H$ in $G$, then $V\uparrow^G_H\cong {V\uparrow^{\widetilde{G}}}_{\widetilde{G}\cap H}$, by Mackey's Theorem (see \cite[Proposition 6.20]{Mack}). Thus, when studying induced modules, one may often reduce to the case where $G$ acts minimally transitively on $H\backslash G$.

Note also that the bounds we obtain in Theorem \ref{pmodlemma} and its corollaries are strong enough to prove Theorem \ref{TransTheorem} in most cases. Due to the nature of the bounds however, this is not the case when $|G:H|$ has the form $2^{m}3$. Thus, we have to work harder, and try to obtain some information about the structure of the minimally transitive groups of degree $2^{m}3$. Recall from Section 1 that our main result is as follows.
\begin{Theorem2} Let $G$ be a minimally transitive permutation group of degree $n=2^m3$. Then one of the following holds:\begin{enumerate}[(i)]
\item $G$ is soluble; or
\item $G$ has a unique nonabelian chief factor, which is a direct product of copies of $L_{2}(p)$, where $p$ is a Mersenne prime.
\end{enumerate}\end{Theorem2}

We begin preparations towards the proof of Theorem \ref{MinTransTheorem1} with some easy observations on minimally transitive groups. 
\begin{Lemma}\label{MinTransLemma1}  Let $G$ be a transitive subgroup of $S_{n}$, let $A$ be a point stabiliser in $G$, let $1\neq L$ be a normal subgroup of $G$, and let $\Omega=\{\Delta_1,\hdots,\Delta_{\chi}\}$ be the set of $L$-orbits. Then\begin{enumerate}[(i)]
\item Either $L$ is transitive, or $\Omega$ forms a system of blocks for $G$. In particular, the size of an $L$-orbit divides $n$.
\item $(L,\Delta_1)$ is permutation isomorphic to $(L,\Delta_j)$, for all $j$.
\item $|\Omega|=|G:AL|$.
\item $G$ is minimally transitive if and only if the only subgroup $X\le G$ satisfying $AX=G$ is $X=G$.
\item If $G$ is minimally transitive, then $G^{\Omega}$ is minimally transitively.
\item If $n=p^{a}$ for a prime $p$ and $G$ is minimally transitive, then $G$ is a $p$-group.
\end{enumerate}\end{Lemma}
\begin{proof} Parts (i), (ii) and (iii) are clear. Also, a subgroup $X$ of $G$ is transitive if and only if $AX=G$. Hence, Part (iv) follows. 

Part (v) is proved in \cite[Theorem 2.4]{DallaVolta}. Finally, Part (vi) follows since a Sylow $p$-subgroup of a transitive group of degree $p^a$ acts transitively. \end{proof}

\subsection{Subgroups of index $2^{m}3$ in direct products of nonabelian simple groups}   
In \cite[Corollary 6]{LieMin}, information is given regarding the prime divisors of indices of subgroups of simple groups. We utilise this work in the following proposition.  
\begin{Proposition}\label{SubSimpleStep} Let $T$ be a nonabelian finite simple group, and suppose that $T$ has a proper subgroup $X$ of index $n=2^{i}3^{j}$, with $0\le j\le 1$. Then one of the following holds:\begin{enumerate}[(i)]
\item $T=M_{12}$ and $X$ is contained in one of the two $T$-conjugacy classes of copies of $M_{11}$ in $M_{12}$.
\item $T=M_{11}$ or $M_{24}$, and $X$ is $T$-conjugate to $L_2(11)$ or $M_{23}$, respectively. 
\item $T=A_r$, $r=2^{i}3^{j}$, and either $X$ is $T$-conjugate to $A_{r-1}$, or $r=6$ and $X$ is $T$-conjugate to $L_2(5)$. 
\item $T=L_{2}(p)$ where $p$ is a prime of the form $p=2^{f_{1}}3^{f_2}-1$ with $f_2\le 1$, and $X$ is a subgroup of index either $1$ or $3$ in a $T$-conjugate of the maximal subgroup $M=C_{p}\rtimes C_{(p-1)/2}< L_{2}(p)$.
\end{enumerate}\end{Proposition}
\begin{proof} For a finite set $F$, let $\pi(F)$ denote the set of prime divisors of $|F|$. Thus, we have $\pi(X)\subseteq\pi(T)$, since $X\le T$. We wish to reduce to the case $\pi(X)=\pi(T)$ and then use \cite[Corollary 6]{LieMin}. However, we first need to deal with some cases which are not covered by this approach. First, the classification of the maximal subgroups of the simple classical groups of dimension up to $12$ implies that $T$ is not $L_{2}(8)$, $L_{3}(3)$, $U_{3}(3)$, $\Sp_{4}(8)$, $U_4(2)$ or $U_5(2)$ (see \cite[Tables 8.1, 8.2, 8.3, 8.4, 8.5, 8.6, 8.10, 8.11, 8.14, 8.20 and 8.21]{DerekColvaJohn}). 

Assume next that $T\cong L_{2}(p)$, for some prime $p$ of the form $p=2^{f_{1}}3^{f_2}-1$, with $f_2\ge 0$. Also, let $M$ be a maximal subgroup of $T$ containing $X$. Then, since $|T:M|$ divides $|T:X|=2^{i}3^{j}$ with $j\le 1$, we must have $M=C_{p}\rtimes C_{(p-1)/2}$, and $f_2\le 1$ (see \cite[Table 8.1]{DerekColvaJohn}). Set $l:=1$ if $f_2=0$, and $l:=3$ if $f_2=1$. Since $(p+1)/l$ is the highest power of $2$ dividing $|T|$, and $|T:X|=2^{i}3^j$ with $j\le 1$, either $X=M$; or $f_2=0$ and $|M:X|=3$. This is the situation described in (iv).

Next, assume that $T$ is one of the Mathieu groups $M_{11}$ or $M_{12}$. Using the $\mathbb{ATLAS}$ \cite{Atlas}, we find that the only possibilities for $X$ are $T=M_{11}$ and $X$ is $T$-conjugate to $L_{2}(11)\le M_{11}$ (of index $12$); or $T=M_{12}$ and $X$ is a member of one of the two $T$-conjugacy classes of $M_{11}\le M_{12}$ (of index $12$).  

Finally, assume that $T$ is not one of the groups considered above, and let $\Pi$ be the set of primes for $T$ given in the statement of \cite[Corollary 6]{LieMin}. Then $\pi(|T:X|)\subseteq\{2,3\}$, and $q\ge 5$ for each $q\in \Pi$ (the cases where $\Pi$ contains $2$ or $3$ have been dealt with in the preceding paragraphs - see \cite[Corollary 6]{LieMin}). Thus, we must have $\Pi\subseteq \pi(X)$. Hence \cite[Corollary 6]{LieMin} gives $\pi(X)=\pi(T)$ and the possibilities for $T$ and $X$ are as follows (see \cite[Table 10.7]{LieMin}).\begin{enumerate}[(1)]
\item $T=A_{r}$, $A_{k}\unlhd X\le S_{k}\times S_{r-k}$, and $k$ is greater than or equal to the largest prime $p$ with $p\le r$ (in particular, $k\ge 5$, since $T$ is simple). Then $|A_{r}:A_{r}\cap (S_{k}\times S_{r-k})|=\binom{r}{k}$ divides $|T:X|=2^{i}3^{j}$. But a well-known theorem of Sylvester and Schur (see \cite{Syl}) states that either $\binom{r}{k}=1$ or $\binom{r}{k}$ has a prime divisor exceeding $\min\left\{k,r-k\right\}$. Thus, since $k\ge 5$ we must have $k=r-2$ or $k=r-1$. Since $r\ge 5$, $k=r-1$ is the only option and hence $X=A_{r-1}$, which gives us what we need.
\item $T=A_6$, $X= L_2(5)$. This, together with (1) above, gives precisely the situation described in (iii).
\item $T=\PSp_{2m}(q)$ ($m$, $q$ even) or $\Ps\Omega_{2m+1}(q)$ ($m$ even, $q$ odd), and $\Omega^{-}_{2m}(q) \unlhd X$. Then $X\le N_{T}(\Omega^{-}_{2m}(q))$, so $|T:N_{T}(\Omega^{-}_{2m}(q))|$ divides $|T:X|=2^{i}3^{j}$. But\\ $|N_{T}(\Omega^{-}_{2m}(q)):\Omega^{-}_{2m}(q)|= 2$, by \cite[Proposition 4.8.6]{KleidLie} for $T=\PSp_{2m}(q)$ and \cite[Proposition 4.1.6]{KleidLie} for $T=\Ps\Omega_{2m+1}(q)$. Hence, $|T:\Omega^{-}_{2m}(q)|$ divides $2^{i+1}3^{j}$. Also, for each of the two choices of $T$ we get $|T:\Omega^{-}_{2m}(q)|=q^{m}(q^{m}-1)$. But $q^{m}(q^{m}-1)$ cannot be of the form $2^{f}$ or $2^{f}3$, since $m>1$ and $(m,q)\neq (2,2)$ (as $T$ is simple). Therefore, we have a contradiction. 
\item $T=\Ps\Omega^{+}_{2m}(q)$ ($m$ even, $q$ odd) and $\Omega_{2m-1}(q) \unlhd X$. As above, $X\le N_{T}(\Omega_{2m-1}(q))$, and we use \cite[Proposition 4.1.6 Part (i)]{KleidLie} to conclude that $|N_{T}(\Omega_{2m-1}(q)):\Omega_{2m-1}(q)|= 2$. It follows that $\frac{1}{2}q^{m-1}(q^{m}-1)=|T:\Omega_{2m-1}(q)|$ divides $2^{i+1}3^{j}$. This again gives a contradiction, since $m\ge 4$.
\item $T=\PSp_{4}(q)$ and $\PSp_{2}(q^{2}) \unlhd X$. Then $X\le N_{T}(\PSp_{2}(q^2))$, and \cite[Proposition 4.3.10]{KleidLie} gives $|N_{T}(\PSp_{2}(q^{2})):\PSp_{2}(q^{2})|= 2$. It follows that $q^{2}(q^{2}-1)=|T:\PSp_{2}(q^{2})|$ divides $2^{i+1}3^{j}$. Again, this is impossible.
\item In each of the remaining cases (see \cite[Table 10.7]{KleidLie}), we are given a pair ($T$, $Y$), where $T$ is $L_{2}(8)$, $L_{3}(3)$, $L_{6}(2)$, $U_{3}(3)$, $U_{3}(5)$, $U_{4}(3)$, $U_{6}(2)$, $\PSp_{4}(7)$, $\PSp_{4}(8)$, $\PSp_{6}(2)$, $\Ps\Omega^{+}_{8}(2)$, $G_{2}(3)$, $^{2}F_{4}(2)'$, $M_{24}$, $\HS$, $\ML$, $\Co_{2}$ or $\Co_{3}$, and $Y$ is a subgroup of $T$ containing $X$. Apart from when $T=M_{24}$, we find that $|T:Y|$ does not divide $2^{i}3^{j}$, so we get a contradiction in each case. When $T=M_{24}$, the only possibility is when $X$ is $T$-conjugate to $M_{23}\le M_{24}$ (of index $24$).\end{enumerate}
This completes the proof.\end{proof}

Our main tool in proving Theorem \ref{MinTransTheorem1} is the Frattini argument. The result is well-known, but we couldn't find a reference so we include a proof here.
\begin{Lemma}\label{FrattiniArgument} Let $G$ be a group, and let $L$ be a normal subgroup of $G$. Suppose that $H$ is a subgroup of $L$ with the property that $H$ and $H^{\alpha}$ are $L$-conjugate for each $\alpha\in \Aut(L)$. Then $G=N_G(H)L$.\end{Lemma}
\begin{proof} Let $g\in G$. Then conjugation by $g$ induces an automorphism of $L$, so $H^g=H^l$ for some $l\in L$, by hypothesis. Hence, $gl^{-1}\in N_{G}(H)$, so $g\in N_{G}(H)L$, and this completes the proof.\end{proof}

With the Frattini argument in mind, the next corollary will be crucial.
\begin{Lemma}\label{HStep3} Let $T$ be a nonabelian finite simple group, and suppose that $T$ has a proper subgroup $X$ of index $r:=2^{i}3^{j}$, with $0\le j\le 1$. Assume also that if $T\cong L_{2}(p)$, with $p$ a Mersenne prime, then $j=0$. Denote by $\Gamma$ the set of right cosets of $X$ in $T$. Then there exists a proper subgroup $H$ of $T$ with the following properties:\begin{enumerate}[(i)]
\item $H$ and $H^{\alpha}$ are conjugate in $T$ for each automorphism $\alpha\in \Aut(T)$; and
\item $N_T(H)^{\Gamma}$ is transitive.
\end{enumerate}\end{Lemma}
\begin{proof} By Proposition \ref{SubSimpleStep}, the possibilities for the pair $(T,X)$ (up to conjugation in $T$) are as follows:\begin{enumerate}
\item $(T,X)=(A_{r},A_{r-1})$, with $r=2^{i}3^{j}$ for some $j\le 1$, or $(T,X)=(A_6,L_2(5))$. Since $T$ is nonabelian simple, $r\ge 6$, so $r$ is even. If $r$ is a power of $2$, let $H$ be a Sylow $2$-subgroup of $T$. Then $H^{\Gamma}$ itself is transitive, and properties (i) and (ii) are clearly satisfied.

Otherwise, let $H=\langle (1,2,3),(4,5,6),\hdots,(r-1,r-2,r)\rangle$. Then $N_{T}(H)^{\Gamma}$ is transitive. Thus, (ii) is satisfied. Property (i) is also easily seen to be satisfied (this includes the case $r=6$, when $\Out{(A_{6})}$ has order $4$). 
\item $(T,X)=(M_{11},L_{2}(11))$: Let $H$ be a Sylow $3$-subgroup of $T$. Then $N_{T}(H)\cong M_{9}:2$ (see page 18 of the $\mathbb{ATLAS}$ of finite groups \cite{Atlas}) acts transitively on the cosets of $X$. Since $\Aut(M_{11})=\Inn(M_{11})$, (i) and (ii) are satisfied.
\item $T=M_{12}$ and $X$ is $T$-conjugate to one of the two copies of $M_{11}$ in $M_{12}$; or $T=M_{24}$ and $X$ is $T$-conjugate $M_{23}$: In each case, let $H$ be a subgroup of $T$ generated by a fixed point free element of order $3$. When $T=M_{12}$, $N_{T}(H)\cong A_{4}\times S_{3}$ (see \cite[page 18]{Atlas}) is a maximal subgroup of $T$, and acts transitively on the cosets of $X$ (for each copy of $M_{11}$). Also, the unique non-identity outer automorphism of $M_{12}$ fixes the set of $T$-conjugates of $H$, so both (i) and (ii) are satisfied.

When $T=M_{24}$, $N_{T}(H)$ has order $1008$, and acts transitively on the cosets of $X$ (using MAGMA \cite{MAGMA}, for example). Also, $\Out{(T)}$ is trivial. Thus, (i) and (ii) are again satisfied.
\item $T=L_{2}(p)$, with $p=2^{f_{1}}3^{f_2}-1\ge 7$, $f_2\le 1$ and $X=C_{p}\rtimes C_{(p-1)/2}$. Then $|T:X|=p+1=2^{f_1}3^{f_2}$. Assume first that $p\ge 7$, and let $H$ be a dihedral group of order $p+1$ contained in $T$. Since $T$ has a unique conjugacy class of maximal subgroups of dihedral groups of order $p+1$, (i) follows. Furthermore, $|T:H|$ and $|T:X|$ are coprime, so (ii) is also satisfied. 

This just leaves the case $p=5$, but in this case $T=A_5$ and $X$ is $T$-conjugate to $D_{10}$ so taking $H=A_4$ gives us what we need.  
\end{enumerate}\end{proof}

\begin{Lemma}\label{pplus1Lemma} Let $p\ge 7$ be a Mersenne prime, and let ${L}={T_{1}}\times {T_{2}}\times\hdots\times {T_{e}}$, where each $T_{i}\cong L_{2}(p)$. Also, let ${A}$ be a subgroup of ${L}$ such that $|{L}:{A}|=2^{a}3$, for some $a$, and $|T_{i}:T_{i}\cap {A}|\in\{p+1,3(p+1)\}$ for all $i$, with $|T_i:T_i\cap A|=3(p+1)$ for at least one $i$. Then
\begin{enumerate}[(i)]\item $|{L}:{A}|= 3(p+1)^{e}$.
\item Let $P$ be a Sylow $p$-subgroup of $L$. Then $N_{{L}}(P)$ is soluble, and has precisely $2^{e}$ orbits on the set $\Delta$ of (right) cosets of ${A}$ in ${L}$, with $\binom{e}{k}$ orbits of size $3p^{k}$, for each $k$, $0\le k\le e$.\end{enumerate}
\end{Lemma}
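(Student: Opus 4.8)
The plan is to first pin down the structure of $A$ inside $M$, and then read off both parts from it.

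\textbf{Part (i) and the structure of $A$.} Since $p$ is a Mersenne prime, $p+1$ is a power of $2$, say $p+1=2^{s}$, so the hypotheses read $|T_{i}:T_{i}\cap A|=3\cdot 2^{s}$ and $|M:A|=2^{a}3$. Put $A_{i}:=A\cap T_{i}$, so $|A_{i}|=|T_{i}|/(3(p+1))=p(p-1)/6$. First I would show that $A_{i}$ lies in a unique Borel subgroup $B_{i}$ of $T_{i}\cong L_{2}(p)$ with $|B_{i}:A_{i}|=3$ and $A_{i}\unlhd B_{i}$: as $|A_{i}|/p=(p-1)/6<p$, the Sylow $p$-subgroup $P_{i}$ of $A_{i}$ is normal in $A_{i}$, so $A_{i}\le N_{T_{i}}(P_{i})=:B_{i}\cong C_{p}\rtimes C_{(p-1)/2}$; and since $p\equiv 1\pmod 3$ (as $s$ is odd) we have $3\mid(p-1)/2$, so $A_{i}$ is the unique index-$3$ (hence normal) subgroup $C_{p}\rtimes C_{(p-1)/6}$ of $B_{i}$. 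Next I would show $A\le N:=B_{1}\times\dots\times B_{r}$. Let $\pi_{i}\colon M\to T_{i}$ be the $i$-th projection. If $\pi_{i}(A)=T_{i}$ then $A$ surjects onto the first factor of $T_{i}\times\prod_{j\ne i}T_{j}$, which forces $A\cap T_{i}=A_{i}$ to be normal in $T_{i}$ — impossible, since $1<|A_{i}|<|T_{i}|$ and $T_{i}$ is simple. Hence $\pi_{i}(A)$ is a proper subgroup of $T_{i}$ of order divisible by $p$, so by the subgroup structure of $L_{2}(p)$ it lies in a Borel subgroup, which must be $B_{i}$ as it contains $P_{i}$; thus $A\le\prod_{i}\pi_{i}(A)\le N$. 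Now $\prod_{i}A_{i}\le A\le N$, with $\prod_{i}A_{i}\unlhd N$ and $N/\prod_{i}A_{i}\cong\prod_{i}(B_{i}/A_{i})\cong C_{3}^{r}$; in particular $A\unlhd N$. Writing $|A:\prod_{i}A_{i}|=3^{t}$, we obtain $|M:A|=|M:N|\cdot|N:A|=(p+1)^{r}3^{r-t}=2^{sr}3^{r-t}$, and comparing with $|M:A|=2^{a}3$ forces $a=sr$ and $t=r-1$. Hence $|M:A|=3\cdot 2^{sr}=3(p+1)^{r}$, which is (i); along the way we have recorded that $A\unlhd N$, $N/A\cong C_{3}$, and $|A:\prod_{i}A_{i}|=3^{r-1}$.

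\textbf{Solubility and reduction of (ii) to a monodromy statement.} (Here ``Sylow $p$-subgroup of $G$'' should read ``of $M$''.) Since Sylow subgroups of a direct product are products, any $P\in\mathrm{Syl}_{p}(M)$ equals $\prod_{i}P_{i}$ with $P_{i}\in\mathrm{Syl}_{p}(T_{i})$, so $N_{M}(P)=\prod_{i}N_{T_{i}}(P_{i})$ is a product of Borel subgroups of the $T_{i}$, hence soluble (each is metabelian). As $p\nmid|M:A|$ I may take $P\le A$, so $N_{M}(P)=N=\prod_{i}B_{i}$. It remains to count the $N$-orbits on $\Omega:=A\backslash M$. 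The key observation is that $\Omega$ is a ``$C_{3}$-cover'' of $N\backslash M\cong\prod_{i}(B_{i}\backslash T_{i})$: because $A\unlhd N$, the group $N/A\cong C_{3}$ acts on $\Omega$ on the left via $nA\colon Ax\mapsto A(nx)$, this action is free on each fibre of the natural $N$-equivariant map $\psi\colon\Omega\to N\backslash M$ (so the fibres have size $3$), and it commutes with the right $M$-action. Moreover the right $N$-action on $N\backslash M\cong\prod_{i}(B_{i}\backslash T_{i})$ is the coordinatewise action of $\prod_{i}B_{i}$ on a product of copies of $\mathbb{P}^{1}(\mathbb{F}_{p})$, and since $L_{2}(p)$ is $2$-transitive each $B_{i}$ has exactly two orbits on $B_{i}\backslash T_{i}$ (a fixed point, and one orbit of length $p$). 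Hence $N$ has exactly $2^{r}$ orbits on $N\backslash M$, one for each $T\subseteq\{1,\dots,r\}$, of length $p^{|T|}$. So it suffices to prove that for every $T$ the preimage $\psi^{-1}(\mathcal{O}_{T})$ of the orbit $\mathcal{O}_{T}$ of type $T$ is a \emph{single} $N$-orbit, necessarily of length $3p^{|T|}$; summing over $T$ then yields $\binom{r}{k}$ orbits of length $3p^{k}$ for each $k$, and $2^{r}$ in total.

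\textbf{The monodromy computation — the crux.} Fix $T=\{1,\dots,t\}$ and a base point $P_{0}=Ny_{0}\in\mathcal{O}_{T}$ with $y_{0}=(z_{1},\dots,z_{t},1,\dots,1)$, $z_{i}\notin B_{i}$, and set $S:=\Stab_{N}(P_{0})=N\cap y_{0}^{-1}Ny_{0}$. As a map of $N$-sets, $\psi^{-1}(\mathcal{O}_{T})\to\mathcal{O}_{T}\cong N/S$ exhibits $\psi^{-1}(\mathcal{O}_{T})$ as $N\times_{S}F$, where $F=\psi^{-1}(P_{0})$ is a free $N/A$-torsor of size $3$; so the $N$-orbits on $\psi^{-1}(\mathcal{O}_{T})$ correspond to the $S$-orbits on $F$, and $\psi^{-1}(\mathcal{O}_{T})$ is a single $N$-orbit iff $S$ is transitive on $F$. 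Identifying $F$ with $N/A$ via the free $N/A$-action, one checks that $b\in S$ acts on $F$ by translation by the image of $y_{0}by_{0}^{-1}$ in $N/A$; thus $S$ is transitive on $F$ provided the image of $y_{0}Sy_{0}^{-1}=N\cap y_{0}Ny_{0}^{-1}=\bigl(\prod_{i\le t}(B_{i}\cap z_{i}B_{i}z_{i}^{-1})\bigr)\times\bigl(\prod_{i>t}B_{i}\bigr)$ in $N/A$ is all of $N/A$. For $i\le t$ the subgroup $B_{i}\cap z_{i}B_{i}z_{i}^{-1}$ is the intersection of two distinct Borels of $L_{2}(p)$, of order $(p-1)/2$; since $(p-1)/2\nmid p(p-1)/6=|A_{i}|$, it is not contained in $A_{i}$, hence maps onto $B_{i}/A_{i}$. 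Therefore $N\cap y_{0}Ny_{0}^{-1}$ maps onto $\prod_{i}(B_{i}/A_{i})=N/\prod_{i}A_{i}$, a fortiori onto the quotient $N/A$. This gives transitivity, hence the required statement, and completes the proof.

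The step I expect to be the main obstacle is the last paragraph: verifying carefully the induced-set description $\psi^{-1}(\mathcal{O}_{T})\cong N\times_{S}F$ and identifying the ``monodromy'' homomorphism $S\to N/A$, $b\mapsto \overline{y_{0}by_{0}^{-1}}$, so that the question becomes merely whether this homomorphism is onto. Everything else — the subgroup structure of $L_{2}(p)$ used in Part (i), and the index arithmetic — is routine.
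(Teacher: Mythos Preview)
Your proof is correct, and it takes a genuinely different route from the paper's.

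For part~(i), the paper argues by induction on $r$: assuming the result for $r-1$, it shows that if $|M:A|\ne 3(p+1)^{r}$ then the projection $\pi_{i}(A)$ would have index less than $p+1$ in $T_{i}$, contradicting the minimal index of a proper subgroup of $L_{2}(p)$. You instead prove the stronger structural fact directly: $A$ is sandwiched between $\prod_{i}A_{i}$ and $N=\prod_{i}B_{i}$, with $N/\prod_{i}A_{i}\cong C_{3}^{r}$, so the index condition forces $|N:A|=3$ and in particular $A\unlhd N$. This is cleaner and yields the normality $A\unlhd N$ as a by-product, which you then exploit in part~(ii).

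For part~(ii), the paper works from below: it introduces $B=\prod_{i}A_{i}\le A$, computes the $N$-orbits on $B\backslash M$ coordinatewise (getting $2^{r}$ orbits of sizes $3^{r}p^{k}$), shows these give $2^{r}$ distinct $N$-orbits on $A\backslash M$, and recovers the orbit sizes by dividing by $|A:B|=3^{r-1}$ together with a counting argument. You work from above: using $A\unlhd N$, you view $A\backslash M\to N\backslash M$ as a $C_{3}$-cover, compute the $2^{r}$ orbits on $N\backslash M$ (sizes $p^{k}$), and then show each lifts to a single orbit via the monodromy map $S\to N/A$, $b\mapsto \overline{y_{0}by_{0}^{-1}}$, which is onto because the intersection of two distinct Borels of $L_{2}(p)$ has order $(p-1)/2\nmid |A_{i}|$. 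The paper's route is more elementary bookkeeping; yours is more conceptual and makes the reason for the uniform factor of $3$ transparent.

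One small remark: your argument needs $p\neq 3$ (so that $3\mid (p-1)/2$ and $A_{i}\neq 1$), but in the paper the lemma is only ever applied with $p\ge 31$, so this is harmless.
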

\begin{proof} We first prove Part (i) by induction on $e$, with the case $e=1$ being trivial. So assume that $e>1$, and fix $k$ in the range $1\le k\le e$ with $|T_k:T_k\cap A|=3(p+1)$. Also, fix $i\neq k$, and set 
$\hat{T_{i}}:=T_{1}\times\hdots\times T_{i-1}\times T_{i+1}\times\hdots\times T_{e}$
and $\hat{A_{i}}={A}\cap \hat{T_{i}}$. Then 
$$|{T_{j}}:{T_{j}}\cap \hat{A_{i}}|=|{T_{j}}:T_{j}\cap \hat{T_{i}}\cap {A}|=|T_{j}:T_{j}\cap A|\in\{3(p+1),p+1\}$$ for each $j\neq i$. In particular, $|{T_{k}}:{T_{k}}\cap \hat{A_{i}}|=3(p+1)$. Also, $|\hat{T_{i}}:\hat{A_{i}}|=|\hat{T_{i}}A:A|$ divides $|{L}:{A}|$, and is divisible by $|{T_{k}}:T_{k}\cap \hat{A_{i}}|=|T_{k}\hat{A_{i}}:\hat{A_{i}}|=3(p+1)$, so $|\hat{T_{i}}:\hat{A_{i}}|=2^{b_{i}}3$, for some $b_{i}\le a$. Hence, the inductive hypothesis implies that $|\hat{T_{i}}:\hat{A_{i}}|=3(p+1)^{e-1}$.

Assume that the claim in Part (i) does not hold. Then since $(p+1)^{e}$ is the highest power of $2$ dividing $|{L}|$, we must have $|{L}:\hat{T_i}{A}|=|L:A|/|\hat{T_i}:\hat{A_i}|<p+1$. Hence, if $\rho_{i}:{L}\rightarrow T_{i}$ denotes projection onto $T_{i}$, then $|T_{i}:\rho_{i}({A})|=|\rho_{i}({L}):\rho_{i}(\hat{T_{i}}{A})|=|{L}:\hat{T_{i}}{A}|<p+1$. But, as can be readily checked using \cite[Tables 8.1 and 8.2]{DerekColvaJohn}, no maximal subgroup of $L_{2}(p)$ can have index a power of $2$ and strictly less than $p+1$. Thus, we must have $\hat{T_{i}}{A}={L}$, so ${A}$ projects onto $T_{i}$. But then ${A}\cap T_{i}$ is a normal subgroup of $T_{i}$, so ${A}\cap T_{i}=1$ or $T_{i}$. This contradicts $|T_{i}:{A}\cap T_{i}|\in\{p+1,3(p+1)\}$, and Part (i) follows.

Finally, we prove (ii). Let $N:=N_{{L}}(P)$. By Proposition \ref{SubSimpleStep} Part (iii), each $T_{j}\cap {A}$ is contained in a maximal subgroup $M_{j}:=C_{p}\rtimes C_{(p-1)/2}$ of $T_{j}$, and $|T_{j}:T_{j}\cap {A}|\in\{p+1,3(p+1)\}$. Thus, $T_{j}\cap {A}$ has a normal Sylow $p$-subgroup $P_{j}\cong C_{p}$. Let $\widetilde{P}:=P_{1}\times \hdots\times P_{e}$, so that $\widetilde{P}$ is a Sylow $p$-subgroup of $L$. Since $P$ and $\widetilde{P}$ are conjugate in $L$, we may assume, for the purposes of proving Part (ii), that $\widetilde{P}=P$. Since $M_{j}=N_{T_{j}}(P_{j})$ is soluble, $N=M_{1}\times \hdots\times M_{e}$ is soluble. Also, $P\unlhd {A}$ since $P$ is a characteristic subgroup of $(T_1\cap A)\times\hdots\times (T_e\cap A)\unlhd A$, so ${A}\le N$. 

Suppose first that $e=1$. Then $|{L}:{A}|=3(p+1)$, so ${A}$ has index $3$ in ${N}$, since $|{L} :N|=|{L}:M_{1}|=p+1$. Let $x\in {L}\backslash N$, and let $\Gamma\subset \Delta$ be the $N$-orbit corresponding to ${A}x$. Then $|\Gamma|=|N:N\cap {A}^{x}|=\frac{|{L}:N\cap {A}^{x}|}{|{L}:N|}$. Since $|{L}:N|=p+1$ is a power of $2$ and $|{L}:N\cap {A}^{x}|$ is divisible by $|{L}:{A}^{x}|=3(p+1)$, it follows that $3$ divides $|\Gamma|$. Also, as mentioned above, $A^{x}$ and $N$ have unique Sylow $p$-subgroups $P^{x}$ and $P$, respectively. Since $x$ does not normalise $P$, we have $P^{x}\neq P$, so $p$, and hence $3p$, divides $|N:N\cap {A}^{x}|=|\Gamma|$. Since $|N:{A}|=3$ and $|{L}:{A}|=3(p+1)$, it follows that $|\Gamma|=3p$, which proves the claim in the case $e=1$.

We now consider the general case. Fix $1\le i\le e$, and $x_{i}\in T_{i}\backslash M_{i}$. Suppose first that $|T_i:T_i\cap A|=3(p+1)$. From the previous paragraph, we see that $M_{i}$ has precisely two orbits on the cosets of $T_i\cap {A}$ in $T_{i}$, of size $3$ and $3p$, represented by ${A}$ and ${A}x_{i}$ respectively. Next, assume that $|T_i:T_i\cap A|=p+1$. Then $M_i=T_i\cap A$. Moreover, arguing as in the previous paragraph, $p$ divides $|M_i:M_i\cap {A}^{x_i}|$, from which it follows that $M_i$ again has two orbits on the cosets of ${A}\cap T_{i}$ in $T_{i}$, of size $1$ and $p$, represented by ${A}$ and ${A}x_{i}$ respectively.

Let $B:=(T_1\cap {A})\times\hdots\times (T_e\cap {A})\unlhd {A}$. It is clear, from the previous paragraph, that $N=M_{1}\times\hdots\times M_{e}$ has $2^{e}$ orbits on the cosets of $B$ in $L$, represented by $Bt_{1}t_{2}\hdots t_{e}$, where $t_{i}\in \left\{1,x_{i}\right\}$, for $1\le i\le e$. Also, the orbit represented by the coset $Bt_{1}t_{2}\hdots t_{e}$ has cardinality $3^{d}p^{k}$, where $k$ is the number of subscripts $i$ with $t_{i}\neq 1$, and $d$ is the number of subscripts $i$ with \begin{align}|T_i:T_i\cap A|=3(p+1).\end{align}

Since $B\le {A}$, $N$ has at most $2^{e}$ orbits in $\Delta$. Suppose there exist $t_{i}$, $\widetilde{t}_{i}\in \left\{1,x_{i}\right\}$ for $1\le i\le e$, and $n=n_{1}n_{2}\hdots n_{e}\in N$ (with $n_{i}\in M_{i}$), such that ${A}t_{1}t_{2}\hdots t_{e}={A}(\widetilde{t}_{1}\widetilde{t}_{2}\hdots \widetilde{t}_{e})(n_{1}n_{2}\hdots n_{e})$. Then $t_{i}=a_{i}\widetilde{t}_{i}n_{i}$, where $a_{1}a_{2}\hdots a_{e}\in {A}$. Since ${A}\le N$, it follows that $t_{i}=1$ if and only if $\widetilde{t}_{i}=1$. Hence, $t_{1}t_{2}\hdots t_{e}=\widetilde{t}_{1}\widetilde{t}_{2}\hdots\widetilde{t}_{e}$. Thus, $N$ has precisely $2^{e}$ orbits in $\Delta$, represented by ${A}t_{1}\hdots t_{e}$, where $t_{i}\in \left\{1,x_{i}\right\}$. Since the size of the $N$-orbit corresponding to ${A}{t_{1}t_{2}\hdots t_{e}}$ is 
$$|N:N\cap {A}^{t_{1}t_{2}\hdots t_{e}}|=\frac{|N:N\cap B^{t_{1}t_{2}\hdots t_{e}}|}{|N\cap {A}^{t_{1}t_{2}\hdots t_{e}}:N\cap B^{t_{1}t_{2}\hdots t_{e}}|}\ge \frac{|N:N\cap {B}^{t_{1}t_{2}\hdots t_{e}}|}{|{A}^{t_{1}t_{2}\hdots t_{e}}: B^{t_{1}t_{2}\hdots t_{e}}|},$$
and $|{A}^{t_{1}t_{2}\hdots t_{e}}:B^{t_{1}t_{2}\hdots t_{e}}|=|{A}:B|=|N:B|/|N:{A}|=3^{d-1}$, it now follows from (4.2.1) that 
$$|N:N\cap {A}^{t_{1}t_{2}\hdots t_{e}}|=\frac{|N:N\cap B^{t_{1}t_{2}\hdots t_{e}}|}{3^{d-1}}=3p^{k}$$
where $k$ is the number of subscripts $i$ such that $t_{i}\neq 1$. This proves (ii).
\end{proof}     

\subsection{The proof of Theorem \ref{MinTransTheorem1}}
First, we fix some notation which will be retained for the remainder of this section: Let $G$ be a minimally transitive permutation group of degree $2^{m}3$; let $A$ be the stabiliser in $G$ of a point $\delta$; let $L$ be a minimal normal subgroup of $G$; let $\Omega$ be the set of $L$-orbits; let $K:=\Ker (G^{\Omega})$ be the kernel of the action of $G$ on $\Omega$; and finally, let $\Delta$ be the $L$-orbit containing $\delta$.

\begin{Remark}\label{Rem} $G^{\Omega}$ acts minimally transitively on $\Omega$, by Lemma \ref{MinTransLemma1} Part (v). Note also that, if $|G:AL|$ is a power of $2$, then $G^{\Omega}$ is a $2$-group by Lemma \ref{MinTransLemma1} Part (vi).\end{Remark}

We require the following easy proposition.
\begin{Proposition}\label{MInsolStep} There exists a subgroup $E$ of $G$ such that $G=EL$ and $E\cap K$ is soluble.\end{Proposition}
\begin{proof} Consider the (set-wise) stabiliser $\Stab_G({\Delta})$ of $\Delta$ in $G$. Since $L$ acts transitively on $\Delta$, we have $LA=\Stab_G({\Delta})$. Let $E$ be a subgroup of $G$ minimal with the property that $EK=G$. Then $E\cap K$ is contained in the Frattini subgroup of $E$, and hence is soluble. Finally, $G=EK\le E\Stab_G({\Delta})=ELA$, so $G=ELA$. Thus, $EL=G$ by minimal transitivity, as needed.\end{proof}

\begin{Corollary}\label{LastCor} If $L$ is abelian, then the set of nonabelian chief factors of $G$ equals the set of nonabelian chief factors of $G^{\Omega}$. If $L$ is nonabelian and $|\Omega|=|G:LA|$ is a power of $2$, then $L$ is the unique nonabelian chief factor of $G$.\end{Corollary}
\begin{proof} Let $E$ be as in Proposition \ref{MInsolStep}, and assume that either $L$ is abelian or $L$ is nonabelian and $|\Omega|=|G:LA|$ is a power of $2$. For a finite group $X$ write $\mathrm{NCF}(X)$ for the set of nonabelian chief factors of $X$. We need to prove that $\mathrm{NCF}(G)=\mathrm{NCF}(G^{\Omega})$ if $L$ is abelian, and $\mathrm{NCF}(G)=\{L\}$ otherwise. Note that if $|\Omega|$ is a power of $2$ then $G^{\Omega}$ is soluble, by Remark \ref{Rem}. 

Since $E^{\Omega}$ is transitive, the minimal transitivity of $G^{\Omega}$ implies that $G^{\Omega}=E^{\Omega}\cong E/E\cap K$. Since $E\cap K$ is soluble, it follows that $\mathrm{NCF}(G^{\Omega})=\mathrm{NCF}(E)$. By hypothesis, either $L$ is abelian, or $L$ is nonabelian and $E^{\Omega}$, and hence $E$, is soluble. Since $G=EL$, the claim follows, in either case.\end{proof}

\begin{Proposition}\label{preProp} Suppose that $L=T_1\times \hdots\times T_f$, where each $T_i$ is isomorphic to a nonabelian simple group $T$. Without loss of generality, assume that $\Ker_L(\Delta)=T_{e+1}\times\hdots\times T_f$, so that $L^{\Delta}=T_1^{\Delta}\times\hdots\times T_e^{\Delta}$. Then \begin{enumerate}[(i)]\item $T\cong L_2(p)$ for some Mersenne prime $p$, 
\item $|T_i:T_i\cap A|\in\{p+1,3(p+1)\}$ for each $1\le i\le e$, and;
\item There exists at least one $i$ in the range $1\le i\le e$ such that $|T_i:T_i\cap A|=3(p+1)$.\end{enumerate}\end{Proposition}
\begin{proof} Suppose that the proposition is false, and set $X_i:=T_i\cap A$. Note that $|T_i:X_i|$ divides $2^{m}3$ for each $i$, by Lemma \ref{MinTransLemma1} Part (i). Hence, Proposition \ref{SubSimpleStep} implies that one of the following must hold:\begin{enumerate}[(a)]
\item $T\not\cong L_{2}(p)$, for any Mersenne prime $p$. Then by Proposition \ref{SubSimpleStep}, either $T_i\cong M_{12}$ and each $X_i$ is contained in one of the two conjugacy classes of $M_{11}$ in $M_{12}$; or $(T_i,X_i)=(A_r,A_{r-1})$, $(A_6,L_2(5))$, $(M_{11},L_2(11))$, $(M_{24},M_{23})$, or $(L_2(p),C_p\rtimes C_{\frac{p-1}{2}})$ where $p$ is a prime of the form $p=2^{f_1}3-1$. Here, the group $X_i$ is given up to conjugacy in $T_i$. 
\item $T\cong L_{2}(p)$ for some Mersenne prime $p$. In this case, Proposition \ref{SubSimpleStep} implies that $|T_i:X_i|=p+1$ for all $i$. In particular, $X_i$ is $T_i$-conjugate to the maximal subgroup $M_i:=C_p\rtimes C_{\frac{p-1}{2}}$ of $T_i$. (We remark that it is here where we use the assumption that the proposition is false. Specifically, since $|T_i:X_i|$ divides $2^m3$ for each $i$, Proposition \ref{SubSimpleStep} implies that $X_i$ is $T_i$-conjugate to either $M_i$, or an index $3$ subgroup of $M_i$. Hence $|T_i:X_i|\in \{p+1,3(p+1)\}$ for each $i$. Thus, Part (iii) of the proposition must fail, forcing $|T_i:X_i|$ to be $p+1$, and hence for $X_i$ to be $T_i$-conjugate to $M_i$, for each $i$.) \end{enumerate} 

Fix $1\le i\le e$, and write $T=T_i$. Note that $T^{\Delta}$ is isomorphic to $T$. Set $\Gamma:=\delta^T\subset\Delta$, and set $X:=T\cap A$. Then the pair $(T,X)$ satisfies the hypothesis of Lemma \ref{HStep3}. Thus, we conclude that $T$ contains a proper subgroup $H$ such that \begin{enumerate}[(i)]
\item $H$ and $H^{\alpha}$ are conjugate in $T$ for each automorphism $\alpha\in \Aut(T)$; and
\item $N_{T}(H)^\Gamma$ is transitive.\end{enumerate}

Fix a $T$-orbit $\Gamma'$ in $\Delta$. We claim that $N_{T}(H)^{\Gamma'}$ is transitive. By Lemma \ref{MinTransLemma1} Part (ii), $T^{\Gamma'}$ is permutation isomorphic to $T^{\Gamma}$. Hence, by (ii) above, there exists an automorphism $\alpha$ of $T$ such that $N_{T}(H)^{\alpha}=N_{T}(H^{\alpha})$ acts transitively on $\Gamma'$. Since $H$ is $T$-conjugate to $H^\alpha$, it follows that $N_T(H)$ is $T$-conjugate to $N_{T}(H)^{\alpha}$. Thus, $N_T(H)$ acts transitively on $\Gamma'$, as claimed.

Since $T_{i}\cong T_{j}$ for all $i$, $j$, we can choose the subgroup $H_j<T_j$ corresponding to $H$, and the subgroup $N_j<T_j$ corresponding to $N_T(H)$, for each $1\le j\le f$. Furthermore, each group $X_i$ is determined up to conjugacy in $T_i$ by (a) and (b) above. Hence, by the previous paragraph
\begin{align}N_j\text{ acts transitively on each }T_j\text{-orbit in }\Delta\text{ whenever }1\le j\le e.\end{align}

Set $\widetilde{H}=H_{1}\times H_{2}\times\hdots\times H_{f}< L\text{, and }N:=N_{1}\times N_{2}\times \hdots\times N_{f}$. Now, note that $N\le N_{L}(\widetilde{H})$. Thus, $N_{1}^{\Delta}\times N_{2}^{\Delta}\times\hdots\times N_{e}^{\Delta}=N^{\Delta}\le N_{L}(\widetilde{H})^{\Delta}$. 

We will now prove that $N^{\Delta}$ is transitive. Indeed, let $\epsilon\in\Delta$, and let $x\in L$ such that $\delta^{x}=\epsilon$. Write $x=t_{1}t_{2}\hdots t_{e}$, with $t_{j}\in T_{j}$. By (ii) above, $N_1$ acts transitively on $\delta^{T_1}$. Hence, there exists $n_{1}\in N_{1}$ such that $\delta^{t_{1}}=\delta^{n_{1}}$. We now inductively define the permutations $n_{2}$, $\hdots$, $n_{e}$ by choosing $n_{j}\in N_{j}$ such that $(\delta^{n_{1}\cdots n_{j-1}})^{n_{j}}=\delta^{n_{1}\cdots n_{j-1}t_{j}}$ (this is possible since $N_{j}$ acts transitively on $(\delta^{n_{1}\hdots n_{j-1}})^{T_j}$, by (4.3.1)). Then \begin{align*} 
\epsilon &=\delta^{t_{1}t_{2}\cdots t_{e}} =(\delta^{t_{1}})^{t_{2}\cdots t_{e}}
=\delta^{n_{1}t_{2}\cdots t_{e}} =(\delta^{n_{1}t_{2}})^{t_{3}\cdots t_{e}} \\
&=\delta^{n_{1}n_{2}t_{3}\cdots t_{e}} =(\delta^{n_{1}n_{2}t_{3}})^{t_{4}\cdots t_{e}} 
=\cdots 
=\delta^{n_{1}n_{2}\cdots n_{e}}\end{align*}
Thus
\begin{align}N^{\Delta}\text{ is transitive, as claimed.}\end{align}
 
Finally, let $\alpha\in\Aut{(L)}\cong \Aut{(T)}\wr \Sym({f})$. Then there exists $\tau \in \Sym{(f)}$ and $\alpha_{i}\in \Aut{(T)}$ such that 
\begin{align*}\widetilde{H}^{\alpha} &=H_{1^{\tau}}^{\alpha_{1}}\times H_{2^{\tau}}^{\alpha_{2}}\times\hdots\times H_{f^{\tau}}^{\alpha_{f}}\\
&= H_{1}^{\alpha_{1^{\tau^{-1}}}}\times H_{2}^{\alpha_{2^{\tau^{-1}}}}\times\hdots\times H_{f}^{\alpha_{f^{\tau^{-1}}}}\end{align*}
By (i) above, there exists, for each $1\le i\le f$, an element $t_i\in T_i$ such that $H_{i}^{\alpha_{i^{\tau^{-1}}}}=H_i^{t_i}$. Hence
\begin{align*}\widetilde{H}^{\alpha} &= H_{1}^{t_1}\times H_{2}^{t_2}\times\hdots\times H_{f}^{t_f}=\widetilde{H}^{t_{1}t_{2}\hdots t_{f}}.\end{align*}

Thus, $\widetilde{H}$ and $\widetilde{H}^{\alpha}$ are conjugate in $L$ for all $\alpha\in \Aut{(L)}$. Lemma \ref{FrattiniArgument} then implies that $G=N_{G}(\widetilde{H})L$. Thus, $N_{G}(\widetilde{H})$ acts transitively on the set $\Omega$ of $L$-orbits. But $N_{G}(\widetilde{H})$ also acts transitively on the fixed $L$-orbit $\Delta$, by (4.3.2). Hence, $N_{G}(\widetilde{H})$ is a transitive subgroup of $G$. By minimal transitivity of $G$, it follows that $N_{G}(\widetilde{H})=G$, so $\widetilde{H}$ is normal in $G$. But this is a contradiction, since $1<\widetilde{H}<L$ and $L$ is a minimal normal subgroup of $G$. The proof is complete.\end{proof}

Property (iii) of Proposition \ref{preProp} immediately implies the following.
\begin{Corollary}\label{MinTransTheoremCor2pre} Suppose that $L$ is isomorphic to a direct product of copies of $L_{2}(p)$, where $p$ is a Mersenne prime. Then $|\Delta|$ is divisible by $3$.\end{Corollary}

Finally, we are ready to prove Theorem \ref{MinTransTheorem1}.
\begin{proof}[Proof of Theorem \ref{MinTransTheorem1}] Assume that $G$ is a counterexample to the theorem of minimal degree. Note that $|\Omega|=|G:LA|$ divides $|G:A|=2^{m}3$, and is less than $2^{m}3$. Furthermore, a minimally transitive group of $2$-power degree is soluble by Remark \ref{Rem}. Hence, the minimality of $G$ as a counterexample implies that $G^{\Omega}=G/K$ satisfies either (i) or (ii) in the statement of the theorem. 

If $L$ is abelian, then Corollary \ref{LastCor} implies that the set of nonabelian chief factors of $G$ equals the set of nonabelian chief factors of $G^{\Omega}$. Thus, the result follow from the inductive hypothesis in this case. So we may assume that $L=T_{1}\times T_{2}\times \hdots\times T_{f}$, where each $T_{i}$ is isomorphic to a nonabelian finite simple group $T$. Furthermore, Proposition \ref{preProp} then implies that $T\cong L_2(p)$, where $p$ is a Mersenne prime. Also, $3$ divides $|\Delta|$ by Corollary \ref{MinTransTheoremCor2pre}. But then $|\Omega|=|G:LA|$ is a power of $2$, so $L$ is the unique nonabelian chief factor of $G$ by Corollary \ref{LastCor}. This contradiction completes the proof.\end{proof}

We also deduce two corollaries which will be vital in our application of Theorem \ref{pmodlemma} (see Section \ref{pmodlemmaSection}).
\begin{Corollary}\label{MinTransTheoremCor2} Assume that $G$ is insoluble, and let $p:=2^{a}-1$ be a Mersenne prime such that $G$ has a unique nonabelian chief factor isomorphic to a direct product of $f$ copies of $L_{2}(p)$. Then there exists a triple of integers $(e,t_1,t)$, with $e\ge 1$, and $t\ge t_{1}\ge 0$, such that\begin{enumerate}[(i)]
\item $m=ea+t$, and;
\item For some soluble subgroup $N$ of $G$, $N$ has $2^{e+t_{1}}$ orbits, with $\binom{e}{k}2^{t_{1}}$ of them of length $3p^{k}\times 2^{t-t_{1}}$, for each $k$, $0\le k\le e$.\end{enumerate}\end{Corollary}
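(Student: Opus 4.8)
The plan is to induct on the degree $2^{m}3$. Fix a point stabiliser $A$, a minimal normal subgroup $M$, and let $\Omega$ be the set of $M$-orbits; by Lemma \ref{MinTransLemma}(iii) the group $G^{\Omega}$ is again minimally transitive, of degree $|G:AM|$ dividing $2^{m}3$. Since $G$ has a unique nonabelian chief factor, isomorphic to $L_{2}(p)^{k}$, and $M$ is a chief factor, either $M$ is elementary abelian or $M\cong L_{2}(p)^{k}$. In both cases $N$ will be built from a soluble subgroup of $M$ — the normaliser $N_{M}(\widetilde{P})$ of a suitable Sylow $p$-subgroup when $M$ is nonabelian (the subgroup treated in Lemma \ref{pplus1Lemma}), or a transitive $2$-subgroup of an $M$-orbit otherwise — together with a transitive soluble subgroup for the action on $\Omega$, coming inductively when $G^{\Omega}$ is insoluble and from $G^{\Omega}$ itself when it is soluble (which is automatic when $M\cong L_{2}(p)^{k}$, since then $G/M$, hence $G^{\Omega}$, is soluble).

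Consider first $M$ elementary abelian. Then $M$ is not transitive (else $G=M$ would be abelian), and $M$ is not a $3$-group: its orbits would then have size $3$, $\Omega$ would have $2$-power degree, and re-running Steps 1--3 of the proof of Theorem \ref{MinTransTheorem} for the insoluble group $G^{\Omega}$ (which still carries the factor $L_{2}(p)^{k}$) would produce a proper nontrivial normal subgroup of $G^{\Omega}$ inside a minimal normal subgroup, a contradiction. So $M$ is an elementary abelian $2$-group, its orbits have size $2^{s}$ with $s\ge 1$, and $\Omega$ has degree $2^{m-s}3$. If $G^{\Omega}$ is insoluble, the kernel $K$ of $G\to G^{\Omega}$ contains $M$, has all chief factors among the abelian ones of $G/M$, hence is soluble, and is transitive on each $M$-orbit; the inductive hypothesis gives a triple $(r,t_{1},t_{0})$ with $m-s=er+t_{0}$ and a soluble $\overline{N}\le G^{\Omega}$ with $2^{r+t_{1}}$ orbits on $\Omega$, of which $\binom{r}{j}2^{t_{1}}$ have length $3p^{j}2^{t_{0}-t_{1}}$. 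Taking $N$ to be the preimage of $\overline{N}$ in $G$ (soluble as an extension of $K$ by $\overline{N}$, and transitive on each block since it contains $K$), the $N$-orbits are the unions of blocks over the $\overline{N}$-orbits, so all lengths get multiplied by $2^{s}$ and the count is unchanged; this gives the triple $(r,t_{1},t_{0}+s)$, with $m=er+(t_{0}+s)$ and $t_{0}+s\ge t_{1}\ge 0$. (If instead $G^{\Omega}$ is soluble, then $K$ is insoluble and carries $L_{2}(p)^{k}$; one argues directly by passing to the action of $K$ on a single block and replacing $G$ by a minimal transitive subgroup there, reducing to a smaller instance.)

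The substantial case is $M\cong L_{2}(p)^{k}$, so $G^{\Omega}=G/K$ is soluble and is its own only transitive subgroup. Fix an $M$-orbit $\Delta$; by Step 1 of the proof of Theorem \ref{MinTransTheorem}, after renumbering $M^{\Delta}=T_{1}^{\Delta}\times\cdots\times T_{r}^{\Delta}$ with $r\le k$, each $T_{i}$ acting on each of its orbits in $\Delta$ as $L_{2}(p)$ on the cosets of a subgroup of $V=C_{p}\rtimes C_{(p-1)/2}$, of length $p+1=2^{e}$ or $3(p+1)$. The first task is to read off, from the minimal transitivity of $G$, the transitivity of $G$ on the simple factors of $M$, and $3^{1}\parallel 2^{m}3$, the precise structure of $M\cap A$ on $\Delta$ — which factors have orbit length $3(p+1)$, and in particular that the single factor $3$ of the degree sits in exactly one place — so that Lemma \ref{pplus1Lemma} applies to the product of those factors (yielding a soluble $N_{M}(\widetilde{P})$ with $\binom{r}{j}$ orbits of length $3p^{j}$ on $\Delta$), the factors with orbits of length $p+1$ being absorbed by their transitive Sylow $2$-subgroups $D_{p+1}$, which multiply all orbit lengths by a common $2^{t-t_{1}}$. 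To build $N$, take $\widetilde{P}$ to be the subgroup $\widetilde{H}$ of Step 2 (Sylow $p$ in the factors with $3(p+1)$-orbits, Sylow $2$ in the others) and set $N:=N_{G}(\widetilde{P})$: then $N\cap M=N_{M}(\widetilde{P})$ is soluble and $N/(N\cap M)$ embeds in the soluble group $G/M$, so $N$ is soluble; and by the Frattini argument (as in Step 3 of the proof of Theorem \ref{MinTransTheorem}) $G=NM$, so $N$ maps onto $G^{\Omega}$ and is transitive on $\Omega$. Counting orbits across the block system then gives the triple: $r$ counts the factors contributing a $\{3,3p\}$-split, $m=er+t$ where $2^{t}$ is the $2$-power not accounted for by $r$ copies of $p+1$, and $t_{1}$ is the part of it coming from the blocks below; the orbits are $\binom{r}{j}2^{t_{1}}$ of length $3p^{j}2^{t-t_{1}}$.

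The hard part will be the bookkeeping in the case $M\cong L_{2}(p)^{k}$: proving that $M\cap A$ has exactly the structure required to invoke Lemma \ref{pplus1Lemma} — so that the factor $3$ is correctly located and the orbit-length pattern on $\Delta$ is $\binom{r}{j}$ of length $3p^{j}$ up to a uniform power of $2$ — and then checking that the spliced subgroup $N=N_{G}(\widetilde{P})$ has exactly $2^{r+t_{1}}$ orbits of the stated lengths (equivalently, controlling $N\cap K$, i.e.\ the orbits of $N$ on a single block). Once the orbit counts and lengths are seen to multiply correctly across $\Omega\leftrightarrow\Delta$, the relations $m=er+t$ and $t\ge t_{1}\ge 0$, $r\ge 1$ are immediate.
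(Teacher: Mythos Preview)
Your inductive frame matches the paper's, and in the abelian-$M$ branch your choice of $N$ as the full preimage of $\overline N$ is a legitimate variant of the paper's $N=MX$ (both rest on $K$ being soluble, which holds because the unique nonabelian chief factor sits in $G^{\Omega}$). But the parenthetical ``if $G^{\Omega}$ is soluble'' subcase never arises, and your sketch for it is not a reduction. The paper disposes of it cleanly: pick $H\le G$ minimal with $KH=G$; then $H\cap K\le\Phi(H)$ is nilpotent and $MH=G$ by minimal transitivity, so if $G^{\Omega}=H/(H\cap K)$ were soluble then $H$, hence $G=MH$, would be soluble. The same device gives the clean reason $M$ is not a $3$-group (else $|\Omega|=2^{m}$ and the minimally transitive $G^{\Omega}$ is a $2$-group), replacing your appeal to ``re-running Steps 1--3''.

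The real divergence is the nonabelian case, and here you make it much harder than necessary. You propose a hybrid $\widetilde P$ (Sylow $p$ in some factors, Sylow $2$ in the others), take $N=N_{G}(\widetilde P)$, invoke Frattini to get $N$ transitive on $\Omega$, and then face the orbit bookkeeping you flag as ``the hard part''. The paper avoids all of this. First, it observes (via the minimal-transitivity contradiction at the end of the proof of Theorem~\ref{MinTransTheorem}) that every $T_{i}$ acting nontrivially on $\Delta$ has $|T_{i}:T_{i}\cap A|=3(p+1)$, so there are no ``$p+1$ factors'' and Lemma~\ref{pplus1Lemma} applies directly to give $|\Delta|=3(p+1)^{r}$. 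Then it simply takes $N=N_{M}(P)$ for $P\in\mathrm{Syl}_{p}(M)$. This $N$ lies inside $M\le K$, so it acts \emph{trivially} on $\Omega$: its orbits on the whole set are just $|\Omega|=2^{m-er}$ copies of its orbits on $\Delta$, and Lemma~\ref{pplus1Lemma}(ii) supplies those. One reads off $t=t_{1}=m-er$ immediately, with no Frattini step and no splicing across blocks. Your hybrid $\widetilde P$ also carries a genuine risk: since $G$ permutes the simple factors $T_{i}$ transitively, a $\widetilde P$ that is Sylow $p$ in some factors and Sylow $2$ in others need not be $M$-conjugate to its $G$-translates, so the Frattini argument can fail.
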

\begin{proof} Let $E$ be as in Proposition \ref{MInsolStep}, so that $G=EL$, and $E\cap K$ is soluble. We prove the claim by induction on $m$. Suppose first that $L$ is abelian. Then since $EL=G$ and $E\cap K$ is soluble, $G^{\Omega}=E^{\Omega}$ is insoluble. Hence $|\Omega|=2^{\widetilde{m}}3$ and $|\Delta|=2^{m-\widetilde{m}}$, for some $\widetilde{m}$ with $1\le \widetilde{m}<m$, by Lemma \ref{MinTransLemma1} Parts (i) and (vi). The inductive hypothesis then implies that there exists a triple $(\widetilde{e},\widetilde{t_{1}},\widetilde{t})$ such that\begin{enumerate}
\item $\widetilde{m}=\widetilde{e}a+\widetilde{t}$, and;
\item For some soluble subgroup $\widetilde{N}$ of $E^{\Omega}$, $\widetilde{N}$ has $2^{\widetilde{e}+\widetilde{t_{1}}}$ orbits, with $\binom{\widetilde{e}}{k}2^{\widetilde{t_{1}}}$ of them of length $3p^{k}\times 2^{\widetilde{t}-\widetilde{t_{1}}}$, for each $k$, $0\le k\le \widetilde{e}$.\end{enumerate}
Set $e:=\widetilde{e}$, $t:=m-\widetilde{m}+\widetilde{t}$, and $t_{1}:=\widetilde{t_{1}}$, so that $m=ea+t$, which is what we need for (i). Also, let $Y\le E$ such that $Y^{\Omega}=\widetilde{N}$, and set $N:=LY$. Then $N$ is soluble, since the groups $Y^{\Omega}$, $Y\cap K$ and $L$ are soluble. Moreover, $N$ acts transitively on each $L$-orbit, since $L\le N$. Since each $L$-orbit has size $2^{m-\widetilde{m}}$, it follows that $N$ has $2^{e+t_{1}}$ orbits, with $\binom{e}{k}\times 2^{t_{1}}$ of them of length $3p^{k}2^{\widetilde{t}-\widetilde{t}_{1}+m-\widetilde{m}}=3p^{k}2^{t-t_{1}}$. This gives us what we need.

So assume that $L=T_{1}\times T_{2}\times\hdots\times T_{f}$, where each $T_{i}\cong L_{2}(p)$. By Proposition \ref{SubSimpleStep} Part (iii), $T_i\cap A$ is contained in the maximal subgroup $M_i\cong C_{p}\rtimes C_{(p-1)/2}$ of $T_i$, and $|T_i:T_i\cap A|\in\{p+1,3(p+1)\}$ for all $i$. Furthermore, Proposition \ref{preProp} implies that there exists at least one subscript $i$ such that $|T_{i}:T_i\cap A|=3(p+1)$. Lemma \ref{pplus1Lemma} now implies that $|\Delta|=|L:L\cap A|=3(p+1)^{e}=2^{ea}3$, where $e$ is the number of direct factors of $L$ acting non-trivially on $\Delta$. It also follows that $|\Omega|=2^{m-ea}$.    

By relabeling the $T_{i}$ if necessary, we may write $L^{\Delta}=T_{1}^{\Delta}\times T_{2}^{\Delta}\times\hdots\times T_{e}^{\Delta}$. Let $P$ be a Sylow $p$-subgroup of $L$, and let $N:=N_{L}(P)$. By Lemma \ref{pplus1Lemma} Part (ii), $N$ is soluble, and $N_{L}(P)^{\Delta}=N_{L^{\Delta}}(P^{\Delta})$ has $2^{e}$ orbits on $\Delta$, with $\binom{e}{k}$ of size $3p^{k}$, for each $0\le k\le e$. Since the action of $L$ on each $L$-orbit is permutation isomorphic to the action of $L$ on $\Delta$, it follows that $N:=N_{L}(P)$ has $2^{e}$ orbits on each $L$-orbit, with $\binom{e}{k}$ of size $3p^{k}$, for each $0\le k\le e$. Also, $N$ acts trivially on the set $\Omega$ of $L$-orbits, so $N$ has $2^{e+m-ea}$ orbits in total, with $2^{m-ea}\binom{e}{k}$ of them of size $3p^{k}$, for each $0\le k\le e$. Setting $t:=m-ea$ and $t_{1}:=t$ now gives us what we need, and completes the proof.\end{proof} 

\begin{Corollary}\label{MinTransTheoremCor3} Let $S$ be a transitive permutation group of degree $s:=2^{m}3$, and assume that $S$ contains no soluble transitive subgroups. Then there exists a Mersenne prime $p:=2^{a}-1$ and a triple of integers $(e,t_1,t)$, with $e\ge 1$, and $t\ge t_{1}\ge 0$, such that\begin{enumerate}[(i)]
\item $m=ea+t$, and;
\item For some soluble subgroup $N$ of $S$, $N$ has $2^{e+t_{1}}$ orbits, with $\binom{e}{k}2^{t_{1}}$ of them of length $3p^{k}\times 2^{t-t_{1}}$, for each $k$, $0\le k\le e$.\end{enumerate}\end{Corollary}
\begin{proof} Let $G$ be a minimally transitive subgroup of $S$. Then $G$ is insoluble, so Corollary \ref{MinTransTheoremCor2} applies, and the result follows.\end{proof}

\section{Generating submodules of induced modules for finite groups}\label{ModuleChapter}
The purpose of this paper is to study upper bounds for the function $d$ on the class of finite transitive permutation groups. As can be seen from Section 1, this essentially amounts to deriving upper bounds on $d(G)$ for subgroups $G$ of wreath products $R\wr S$. Our main strategy for doing this will be to reduce modulo the base group $B$ of $R\wr S$ and use induction to bound $d(G/G\cap B)$. In this way, all that remains is to investigate the contribution of $G\cap B$ to $d(G)$: The purpose of this section is to carry out such an investigation. 

As we will show in Lemma \ref{prechief}, the group $G\cap B$ is built, as a normal subgroup of $G$, from submodules of induced modules for $G$, and nonabelian chief factors of $G$. Thus, the main aim of the section will be to derive upper bounds for generator numbers in submodules of induced modules. The strategy to do this will be to first view soluble groups as certain partially ordered sets: We prove some properties of these partially ordered sets in Section \ref{PosetSection}. Our main results are Theorem \ref{BKGARTheorem} and Theorem \ref{pmodlemma}, which are proved in Sections \ref{BKGARSection} and \ref{pmodlemmaSection} respectively. We remark that Theorem \ref{BKGARTheorem} improves \cite[Theorem 1.5]{BKR}, while Theorem \ref{pmodlemma} improves \cite[Lemma 4]{LucMenMor}. 
   
\subsection{Partially ordered sets}\label{PosetSection}
Let $P=(P,\preccurlyeq)$ be a finite partially ordered set, and let $w(P)$ denote the \emph{width} of $P$. That is, $w(P)$ is the maximum cardinality of an antichain in $P$. Suppose now that, with respect to $\preccurlyeq$, $P$ is a cartesian product of chains, and write $P=P_{1}\times P_{2}\times \hdots\times P_{t}$, where each $P_{i}$ is a chain of cardinality $k_{i}$. Then $P$ is poset-isomorphic to the set of divisors of the positive integer $m=p_{1}^{k_{1}-1}p_{2}^{k_{2}-1}\hdots p_{t}^{k_{t}-1}$, where $p_{1}$, $p_{2}$, $\hdots$, $p_{t}$ are distinct primes. We make this identification without further comment. 

Next, recall that each divisor $d$ of $m$ can be written uniquely in the form $d=p_{1}^{r_{1}}p_{2}^{r_{2}}\hdots p_{t}^{r_{t}}$, where $0\le r_{i}\le k_{i}-1$, for each $i$, $1\le i\le t$. In this case, the \emph{rank} of $d$ is defined as $r(d)=\sum_{i=1}^{t} r_{i}$. For $0\le k\le K:=\sum_{i=1}^{t} (k_{i}-1)$, let $R_{k}$ denote the set of elements of $P$ of rank $k$; clearly $R_{k}$ is an antichain in $P$. In fact, it is proved in \cite{dBr} that $w(P)=\max |R_{k}|$. This maximal rank set occurs at $k=\lfloor K/2 \rfloor$, and hence, by \cite[Theorem 2]{And}, we have
$$w(P)\le \left\lfloor\frac{s}{2^{K}}\binom{K}{\lfloor K/2\rfloor}\right\rfloor$$ where $s:=|P|=\prod_{i=1}^{t} k_{i}$ (note that equality holds when $t$ is even and each $k_{i}$ is $2$, so this upper bound is best possible). Stated more concisely, we have   

\begin{Lemma}\label{poset1} Suppose that a partially ordered set $P$, of cardinality $s\ge 2$, is a cartesian product of the chains $P_{1}$, $P_{2}$, $\hdots$, $P_{t}$, where each $P_{i}$ has cardinality $k_{i}$. Then $$w(P)\le\left\lfloor\frac{s}{2^{K}}\binom{K}{\lfloor K/2\rfloor}\right\rfloor,$$ where $K:=\sum_{i=1}^{t} (k_{i}-1)$.\end{Lemma}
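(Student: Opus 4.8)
The plan is to reduce the statement to two classical facts about the divisor lattice. As the surrounding discussion records, once we set $m=p_{1}^{k_{1}-1}\cdots p_{t}^{k_{t}-1}$ with the $p_{i}$ distinct primes, the poset $P$ is poset-isomorphic to the lattice of divisors of $m$ under divisibility, and I would work with this concrete model throughout. First I would dispose of the easy inequality $w(P)\ge |R_{\lfloor K/2\rfloor}|$: each rank set $R_{k}$ is an antichain, since rank is additive along the divisibility order, so two divisors of equal rank with one dividing the other must coincide. Hence $w(P)$ is at least the size of the largest rank set. The substance is the matching upper bound $w(P)\le \max_{k}|R_{k}|$ — i.e. that $P$ has the Sperner property — together with the identification of the largest rank.

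For this I would invoke the de Bruijn--Tengbergen--Kruyswijk theorem \cite{dBr}; alternatively, to keep things self-contained, I would reprove it by exhibiting a symmetric chain decomposition of $P$. A single chain trivially decomposes into one symmetric chain, and if a ranked poset $Q$ possessing a symmetric chain decomposition is multiplied by a chain $C$, then crossing each symmetric chain of $Q$ with $C$ produces a ``grid'' that the standard bracketing argument re-partitions into symmetric chains of $Q\times C$; induction on $t$ then gives a symmetric chain decomposition of $P=P_{1}\times\cdots\times P_{t}$. From this one reads off simultaneously that the number of chains equals $|R_{\lfloor K/2\rfloor}|$, that the middle rank level is the largest, and that no antichain can exceed the number of chains — giving $w(P)\le |R_{\lfloor K/2\rfloor}|$ and, with the lower bound above, equality.

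It then remains to bound $|R_{\lfloor K/2\rfloor}|$, which is the largest coefficient of the polynomial $\prod_{i=1}^{t}(1+x+\cdots+x^{k_{i}-1})$, by $\frac{n}{2^{K}}\binom{K}{\lfloor K/2\rfloor}$. This is precisely Theorem 2 of \cite{And}, which I would cite. If one wanted to prove it directly, I would phrase it probabilistically: writing $g_{i}(x)=k_{i}^{-1}(1+x+\cdots+x^{k_{i}-1})$ for the probability generating function of a variable uniform on $\{0,\ldots,k_{i}-1\}$, the claim becomes that the pgf $\prod_{i} g_{i}$ of the independent sum has maximum coefficient at most $2^{-K}\binom{K}{\lfloor K/2\rfloor}$, the maximum coefficient of $\left(\tfrac{1+x}{2}\right)^{K}$. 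This would follow by combining symmetry and unimodality (products of symmetric unimodal polynomials with nonnegative coefficients are symmetric and unimodal) with a comparison showing each $g_{i}$ is, in the relevant anti-concentration sense, no more peaked than a product of $k_{i}-1$ fair coin-flips. I would also note the sharpness remark: when $t$ is even and every $k_{i}=2$, the poset $P$ is a Boolean lattice of even rank and both sides equal $\binom{t}{t/2}$, so the bound is best possible in general.

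The main obstacle, for a self-contained treatment, is exactly this last coefficient estimate of \cite{And}; the Sperner property for products of chains is routine via symmetric chains, and the rest is bookkeeping. Given the citations already available in the excerpt, the lemma is an immediate assembly of \cite{dBr} and \cite{And}, exactly as indicated in the paragraph preceding the statement.
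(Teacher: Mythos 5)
Your proposal follows essentially the same route as the paper: identify $P$ with the divisor lattice of $m=p_1^{k_1-1}\cdots p_t^{k_t-1}$, invoke de Bruijn--Tengbergen--Kruyswijk \cite{dBr} for the Sperner property $w(P)=\max_k|R_k|$ with maximum at the middle rank, and then bound $|R_{\lfloor K/2\rfloor}|$ by Anderson's Theorem 2 \cite{And}. The self-contained sketches you offer (symmetric chain decomposition; probabilistic coefficient estimate) are optional enrichment, but the backbone of your argument and the paper's are the same two citations assembled the same way.
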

We now define a constant $b$,
\begin{align*} b:=\sqrt\frac{2}{\pi}.\end{align*}

\begin{Proposition}\label{usefulpreposet} Let $K$ be a positive integer. Then \begin{align} \binom{K}{\lfloor K/2\rfloor}\le \frac{b2^{K}}{\sqrt{K}}.\end{align}\label{KBound} 
\end{Proposition}
\begin{proof}\footnote{The idea for this bound arose from a discussion at the url\\ http://math.stackexchange.com/questions/58560/elementary-central-binomial-coefficient-estimates.} First consider the case where $K=2t$ ($t\in\mathbb{N}$), and note that
$$2t\left[\binom{2t}{t}\frac{1}{4^{t}}\right]^{2}=\frac{1}{2}\left(\frac{3}{2}\frac{3}{4}\right)\left(\frac{5}{4}\frac{5}{6}\right)\hdots\left(\frac{2t-1}{2t-2}\frac{2t-1}{2t}\right)=\frac{1}{2}\prod_{j=2}^{t}\left(1+\frac{1}{4j(j-1)}\right)$$ By Wallis' Formula, the expression in the middle converges to $2/\pi$. Hence, since the expression on the right is increasing, we have $2t\left[\binom{2t}{t}\frac{1}{4^{t}}\right]^{2}\le 2/\pi$, that is, $\binom{2t}{t}\le b4^{t}/\sqrt{2t}$, as claimed. If $K$ is odd, we have $\binom{K}{\lfloor K/2\rfloor}=\frac{1}{2}\binom{K+1}{\lfloor (K+1)/2\rfloor}$, and the bound in (\ref{KBound}) follows from the even case above.\end{proof}

\begin{proof}[Proof of Theorem \ref{usefulposet}] By Lemma \ref{poset1} and Proposition \ref{usefulpreposet}, we have
$$w(P)\le \frac{s}{2^{K}}\binom{K}{\lfloor K/2\rfloor}\le \frac{s}{2^{K}}\left(\frac{b2^{K}}{\sqrt{K}}\right)=\frac{bs}{\sqrt{K}}$$
If each $k_{i}=p$, then $K=t(p-1)$, and the second part of the claim follows. Since $K=\sum_{i=1}^{t} (k_{i}-1)\ge \sum_{i=1}^{t} \log{k_{i}}=\log{s}$, the first part also follows, and the proof is complete.\end{proof}

\subsection{Preliminary results on induced modules for finite groups}
\subsubsection{Composition factors in induced modules}
Let $\mathbb{F}$ be a field, let $G$ be a finite group, and let $V$ be a module for $G$ over $\mathbb{F}$. Let 
$$0=N_{0}<N_{1}<\hdots<N_{a}=V$$
be a $G$-composition series for $V$, and say that a factor $N_{i}/N_{i-1}$ is \emph{complemented} if there exists a submodule $S_{i}$ of $V$ containing $N_{i-1}$ such that $V/N_{i-1}=N_{i}/N_{i-1} \oplus S_{i}/N_{i-1}$. Also, for an irreducible $\mathbb{F}[G]$-module $W$, write $t_{W}(V)$ for the number of complemented composition factors of $V$ isomorphic to $W$. 

Now, fix an irreducible $\mathbb{F}[G]$-module $W$ with $t_{W}(V)\ge 1$. Then there exists a submodule $M$ of $V$ with the property that $V/M$ is $G$-isomorphic to $W$: Define $R_{W}(V)$ to be the intersection of all such $M$. In particular, $R_W(V)$ contains the radical $\Rad(V)$ of $V$.

\begin{Lemma} $V/R_{W}(V)\cong W^{\oplus t_{W}(V)}$.\end{Lemma}
\begin{proof} Let $t:=t_{W}(V)$, and write $R:=R_{W}(V)=M_{1}\cap M_{2}\cap \hdots\cap M_{e}$, where $V/M_{i}$ is isomorphic to $W$. Then 
$$V/R\le (V/M_{1})\oplus (V/M_{2})\oplus \hdots\oplus (V/M_{e})$$
and hence $V/R$ is a direct sum of $k$ copies of $W$, where $k\le e$. Since $t_{W}(V)=t_{W}(V/R)$, we have $t=k$, and this completes the proof.\end{proof}

\begin{Lemma}\label{BigLemma} Suppose that $V=U\uparrow^{G}_H$, for a subgroup $H$ of $G$ and an $H$-module $U$, and suppose that $W$ is a $1$-dimensional $\mathbb{F}[G]$-module. Then $t_{W}(V)\le \dim{U}$.\end{Lemma}
\begin{proof} Let $R=R_{W}(V)$ and $t=t_{W}(V)$. Writing bars to denote reduction modulo $R$, we have
$${\ol{V}}=\ol{N_{1}}\oplus \ol{N_{2}}\oplus\hdots\oplus \ol{N_{t}}$$ 
where each $\ol{N_{i}}$ is isomorphic to $W$. In particular, if we write
$$V/\Rad(V)=\sum_{X\text{ an irreducible }\mathbb{F}[G]\text{-module}} X^{f_X(V)},$$
then we have $t\le f_W(V)$. Moreover, since $\dim{W}=1$, we have
$$f_W(V)=\dim{{\Hom}_{\mathbb{F}[G]}(V,W)}=\dim{{\Hom}_{\mathbb{F}[H]}(U,W\downarrow_{H})}=f_{W\downarrow_{H}}(U)\le \dim{U}$$
where the second equality above follows from Frobenius Reciprosity (see \cite[Proposition 3.3.1]{Benson}). This completes the proof.\end{proof}

We will need an easy consequence of Lemma \ref{BigLemma}. To state it, we first require two definitions and a remark.
\begin{Definition} Let $G$ be a non-trivial finite group, and $\mathbb{F}$ a field. A \emph{projective representation} of $G$ of dimension $m$ over $\mathbb{F}$ is a homomorphism $\rho:G\rightarrow PGL_{m}(\mathbb{F})$. Define 
\begin{align*}{R}_{\mathbb{F}}(G) &:=\min\left\{m\text{ : }G\text{ has a non-trivial representation of dimension }m \text{ over }\mathbb{F}\right\};\text{ and}\\
\ol{R}_{\mathbb{F}}(G) &:=\min\left\{m\text{ : }G\text{ has a non-trivial projective representation of dimension }m \text{ over }\mathbb{F}\right\}.\end{align*}
Also define \begin{align*} \ol{R}(G) &:=\min\left\{\ol{R}_{\mathbb{F}}(G)\text{ : }\mathbb{F}\text{ a field}\right\}\end{align*}
\end{Definition}

\begin{Definition}\label{AAA} Let $G$ be a finite group, let $\mathbb{F}$ be a field, and let $V$ be an $\mathbb{F}[G]$-module. Define $d_G(V)$ to be the minimal number of elements required to generate $V$ as an $\mathbb{F}[G]$-module.\end{Definition}

\begin{Remark}\label{BBB} Let $G$, $\mathbb{F}$ and $V$ be as in Definition \ref{AAA}, and let $t$ be the number of complemented $G$-composition factors of $V$. We claim that $d_G(V)\le t$. Note first that $t$ is precisely the number of irreducible constituents of $V/\Rad(V)$. In particular, it follows that $d_G(V/\Rad(V))\le t$: let $v_1$, $\hdots$, $v_t\in V$ such that $V/\Rad(V)$ is generated, as a $G$-module, by $\{\Rad(V)+v_1,\hdots,\Rad(V)+v_t\}$. Let $M$ be the $G$-submodule of $V$ generated by $\{v_1,\hdots,v_t\}$. Then $V=M+\Rad(V)$. Since $\Rad(V)$ is contained in every maximal submodule of $V$, it follows that $V=M$, and hence $d_G(V)\le t$, as claimed.\end{Remark}  

The corollary of Lemma \ref{BigLemma} can now be stated as follows.
\begin{Corollary}\label{BigCor} Let $G$ be a finite group, let $H$ be a subgroup of $G$, and let $U$ be an $H$-module, over a field $\mathbb{F}$.  Let $V:=U\uparrow^G_H$. Then 
$$d_{G}(V)\le \frac{\dim{U}|G:H|-\dim{U}}{R_{\mathbb{F}}(G)}+\dim{U}.$$ \end{Corollary}
\begin{proof} Write $t$ for the number of complemented $G$-composition factors of $V$ which are not isomorphic to the trivial $G$-module ${1}_G$. By Remark \ref{BBB}, we have
$$d_G(V)\le t_{{1}_G}(V)+t.$$
Since $\dim{V}=\dim{U}|G:H|$, we have
$$t\le \frac{\dim{U}|G:H|-\dim{U}}{R_{\mathbb{F}}(G)}.$$
The result now follows immediately from Lemma \ref{BigLemma}.\end{proof}

\subsubsection{Induced modules for Frattini extensions of nonabelian simple groups}
In this subsection, we make some observations on modules for Frattini extensions of nonabelian simple groups. That is, modules for groups $G$ with $G/\Phi(G)$ a non-abelian simple group. 

The main result of this section reads as follows.
\begin{Proposition}\label{IrrProp} Let $G$ be a finite group with a normal subgroup $N\le \Phi(G)$ such that $G/N\cong T$, where $T$ is a non-abelian finite simple group. Also, let $W$ be a nontrivial irreducible $G$-module, over an arbitrary field $\mathbb{F}$. Then\begin{enumerate}[(i)]
\item Each proper normal subgroup of $G$ is contained in $N$. In particular, $N=\Phi(G)$.
\item $\Ker_{G}(W)$, the kernel of the action of $G$ on $W$, is contained in $N$.
\item $n:=\dim{W}\ge \ol{R}(T)$. \end{enumerate}\end{Proposition}
\begin{proof} Part (i) follows since $N\le \Phi(G)$ and $G/N$ is simple. Part (ii) now follows from Part (i) since $W$ is non-trivial.

We will now prove (iii). By (ii), we may assume that $G$ is faithful on $W$. In particular, we may view $G$ as a subgroup of $GL_n(\mathbb{F})$. Let $L$ be a normal subgroup of $G$, and assume that $W\downarrow_L$ is non-homogeneous. If $K$ is the kernel of the action of $G$ on the homogeneous components of $W\downarrow_{L}$, then $K$ is a proper normal subgroup of $G$, so $K\le N$ by Part (i). Thus, $HN< G$ for some stabiliser $H$ of a homogeneous component. Hence, $|G:H|\ge |G:HN|=|G/N:HN/N|\ge \ol{R}_{\mathbb{F}}(T)$, since any proper subgroup $E$ of $T$ gives rise to a nontrivial permutation representation for $T$ of dimension $|T:E|$ over $\mathbb{F}$ (a non-trivial projective representation of dimension $|T:E|$ is then achieved by reducing modulo scalars). Thus, the number of homogeneous components is at least $\ol{R}_{\mathbb{F}}(T)$, and the result follows.

So we may assume that $W\downarrow_L$ is homogeneous for each normal subgroup $L$ of $G$. Hence, by Lemma \ref{213}, we may assume that $Z(G)$ is cyclic and
that each abelian characteristic subgroup of $G$ is contained in $Z(GL_n(\mathbb{F}))$.

Let $L$ be the generalised Fitting subgroup of $G$, and extend the field $\mathbb{F}$ so that $\mathbb{F}$ is a splitting field for each subgroup of $L$, and so that the resulting field extension is normal (see Remark \ref{FIELDEXT}).

We distinguish two cases.\begin{enumerate}
\item $L$ is soluble. In this case, since $L>Z(G)$, $O_{r}(G)$ must be non-central, for some prime $r$, and $O_{r}(G)C_{G}(O_{r}(G))\ge L$. Also, since $O_{r}(G)$ is non-central, we have $O_{r}(G)$, $C_{G}(O_{r}(G))\le N$ by Part (i). Thus, since $N\le \Phi(G)\le L$, it follows that $N=L=O_{r}(G)C_{G}(O_{r}(G))$. Hence, by \cite[Lemma 1.7]{LucComp}, there exists a positive integer $m$ such that\begin{enumerate}[(1)]
\item $O_{r}(G)$ is a central product of its intersection with $Z:=Z(G)$ and an extraspecial group $E$ of order $r^{1+2m}$; 
\item $Z(E)$ coincides with the subgroup of $Z$ of order $r$ (recall that $Z$ is cyclic);
\item $EZ/Z$ is a completely reducible $\mathbb{F}_r[G]$-module under conjugation; and 
\item $C_{G/Z}(EZ/Z)=O_r(G)C_G(O_r(G))/Z$.\end{enumerate}
It follows from (4) that $T\cong G/N=G/O_{r}(G)C_{G}(O_{r}(G))$ is a non-trivial completely reducible subgroup of $GL_{2m}(r)$. It then follows that \begin{align}\label{Steph} \ol{R}_{\mathbb{F}_r}(T)\le 2m.\end{align} 

Next, by Lemma \ref{FieldExt}, $W\downarrow_E$ is completely reducible and its irreducible constituents are non-trivial. Let $U$ be such a constituent. Since $\mathbb{F}$ is a splitting field for $E$, $U$ is absolutely irreducible. Hence, $\dim{U}\ge r^{m}$, by \cite[Theorem 5.5]{Gorenstein}. Thus, by (\ref{Steph}), we have 
$$\ol{R}(T)\le \ol{R}_{\mathbb{F}_r}(T)\le 2m\le r^{m}\le \dim{U}\le \dim{W},$$ which gives us what we need.

\item $L$ is insoluble. By \cite[Lemma 2.14]{derek}, $L$ contains a normal subgroup $X$ of $G$ of the form $X=S_{1}\circ\hdots\circ S_{t}$, where each $S_{i}$ is isomorphic to a quasisimple group $S$. But since $N\le \Phi(G)$, $N$ is nilpotent. Also, $G/N$ is simple, so we must have $G=X$ and $G$ is quasisimple. In particular, $N=Z\le Z(GL_n(\mathbb{F}))$. Hence, $T\cong G/Z\le PGL_n(\mathbb{F})$ and $\dim{W}\ge \ol{R}_{\mathbb{F}}(T)\ge \ol{R}(T)$, as required.\end{enumerate}
This completes the proof.\end{proof}

\subsection{Induced modules for finite groups}\label{MainModuleSection}
We begin with some terminology.
\begin{Definition} Let $M$ be a group, acted on by another group $G$. A \emph{$G$-subgroup} of $M$ is a subgroup of $M$ which is stabilised by $G$. We say that $M$ is \emph{generated as a $G$-group} by $X\subset M$, and write $M=\langle X\rangle_{G}$, if no proper $G$-subgroup of $M$ contains $X$. We will write $d_{G}(M)$ for the cardinality of the smallest subset $X$ of $M$ satisfying $\langle X\rangle_G=M$. Finally, write $M^{\ast}:=M\backslash\{1\}$.\end{Definition}

Note that the definition of $d_G(M)$ is consistent with the notation introduced in Definition \ref{AAA} in the case where $M$ is a $G$-module.

\begin{Definition} Let $G$ be a group, acting on a set $\Omega$. Write $\chi(G,\Omega)$ for the number of orbits of $G$ on $\Omega$.\end{Definition}

The purpose of this section is to derive upper bounds for $d_{G}(M)$ when $M$ is a submodule of an induced module for $G$. To this end, we introduce some notation which will be retained for the remainder of the section: \begin{itemize}
\item Let $G$ be a finite group.
\item Fix a subgroup $H$ of $G$ of index $s\ge 2$.
\item Fix a subgroup $H_{1}$ of $H$ of index $d\ge 1$.
\item Let $U$ be a module for $H_{1}$ of dimension $a$, over a field $\mathbb{F}$.
\item Let $K:=\core_{G}(H)$, and fix a subgroup $K'$ of $K$.
\item Set $V:=U\uparrow^{H}_{H_{1}}$ and $W:=V\uparrow^G_H$ to be the induced modules. Note also that $V\uparrow^G_H\cong U\uparrow^G_{H_{1}}$. 
\item Denote the set of right cosets of $H$ in $G$ [respectively $H_{1}$ in $H$] by $\Omega$ [resp. $\Omega_{1}$].
\item Define $$m:=m(K')=\min\{\chi({Q^{\Omega_1}},{\Omega_{1}})\text{ : }Q\le K'\text{ and }Q^V\text{ is semisimple}\}.$$\end{itemize}
We do not exclude the case $d=1$, that is, $H=H_1$.

\subsubsection{Induced modules: The soluble case}\label{BKGARSection}
This section is essentially an analogue of \cite[Section 5]{BKR}. We first recall the constant $b$,
\begin{align*} b:=\sqrt\frac{2}{\pi}.\end{align*}
We also recall, from Section 1, the following definition.
\begin{Definition} For a positive integer $s$ with prime factorisation $s=p_{1}^{r_{1}}p_{2}^{r_{2}}\hdots p_{t}^{r_{t}}$, set $\omega{(s)}:=\sum r_{i}$, $\omega_{1}{(s)}:=\sum r_{i}p_{i}$, $K(s):=\omega_{1}{(s)}-\omega(s)=\sum r_i(p_i-1)$ and 
$$\ws(s)=\frac{s}{2^{K(s)}}\binom{K(s)}{\left\lfloor\frac{K(s)}{2}\right\rfloor}.$$ \end{Definition}

The main result of this section reads as follows.
\begin{Theorem}\label{BKGARTheorem} Suppose that $G^\Omega$ contains a soluble transitive subgroup, and let $M$ be a submodule of $W$. Also, denote by $\chi=\chi{(K,V^\ast)}$ the number of orbits of $K$ on the non-zero elements of $V$. Then
$$d_{G}(M)\le \min\left\{\frac{ad-am}{R_{\mathbb{F}}(K')}+am,\chi\right\}\ws(s)\le \min\left\{\frac{ad-am}{R_{\mathbb{F}}(K')}+am,\chi\right\}\left\lfloor\frac{bs}{\sqrt{\log{s}}}\right\rfloor$$ where $b:=\sqrt{2/\pi}$. Furthermore, if $s=p^{t}$, with $p$ prime, then $$d_{G}(M)\le \min\left\{\frac{ad-am}{R_{\mathbb{F}}(K')}+am,\chi\right\}\left\lfloor \frac{bp^{t}}{\sqrt{t(p-1)}}\right\rfloor.$$
\end{Theorem}

\begin{Remark} If $K$ has infinitely many orbits on the non-zero elements of $V$, then we assume, in Theorem \ref{BKGARTheorem}, and whenever it is used in the remainder of the paper, that $$\min\left\{\frac{ad-am}{R_{\mathbb{F}}(K')}+am,\chi\right\}=\frac{ad-am}{R_{\mathbb{F}}(K')}+am.$$\end{Remark}

We begin our work towards the proof of Theorem \ref{BKGARTheorem} by first collecting a series of lemmas from \cite[Section 5]{BKR}.
\begin{Lemma} [{\bf \cite{BKR}, Lemma 5.1}]\label{TLemma} Suppose that $G^{\Omega}$ contains a soluble transitive subgroup. Then there is a right transversal $\mathcal{T}$ to $H$ in $G$, with a partial order $\preccurlyeq$ and a full order $\leqslant$, satisfying the following properties:
\begin{enumerate}[(i)]\item Whenever $t_{1}$, $t_{2}$, $t_{3}\in \mathcal{T}$ with $t_{1}<t_{2}\preccurlyeq t_{3}$, we have $t_{4}<t_{3}$, where $t_{4}$ is the unique element of $\mathcal{T}$ such that $t_{1}t_{2}^{-1}t_{3}\in Ht_{4}$.
\item With respect to this partial order, $\mathcal{T}$ is a cartesian product of $k$ chains, of length $p_{1}$, $p_{2}$, $\hdots$, $p_{k}$, where $k=\omega(s)$, and $p_{1}$, $p_{2}$, $\hdots$, $p_{k}$ denote the (not necessarily distinct) prime divisors of $s$.\end{enumerate}\end{Lemma}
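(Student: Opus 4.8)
The plan is to prove the lemma by induction on $|H|$, peeling off a minimal normal subgroup $V$ at each step and assembling $T$ as a twisted product of an inductively given transversal for $H/V$ with a linearly ordered flag inside $V$.

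First I would fix a minimal normal subgroup $V$ of $H$ (this exists since $n\ge 2$ forces $H\ne 1$); solubility makes $V$ an $\mathbb{F}_p$-vector space for some prime $p$. Put $W:=V\cap N$, an $\mathbb{F}_p$-subspace, choose a complementary subspace $V_0\le V$ of dimension $c$, fix a flag $0=L_0<L_1<\cdots<L_c=V_0$ (which, via flag-coordinates, identifies the underlying set of $V_0$ with $\{0,\dots,p-1\}^c$), and set $N^*:=NV$. Applying the inductive hypothesis to $N^*/V\le H/V$ (legitimate: $|H/V|<|H|$, $H/V$ soluble) yields a right transversal $\overline{T}^*$ to $N^*/V$ in $H/V$ carrying orders $\preccurlyeq^*,\le^*$ with properties (i)--(ii). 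Lifting $\overline{T}^*$ to a set $T^*\subseteq H$ of $N^*$-coset representatives, I would take $T:=V_0T^*=\{vt^*:v\in V_0,\ t^*\in T^*\}$; a short verification (using $V_0\subseteq V\subseteq N^*$ and that $V_0$ represents the cosets of $W$ in $V$) shows $T$ is a right transversal to $N$ in $H$ of size $p^c\cdot|H:N^*|=n$. Give $T$ the partial order $vt^*\preccurlyeq v't'^*$ iff $v\preccurlyeq v'$ coordinatewise and $\overline{t^*}\preccurlyeq^*\overline{t'^*}$; this realizes $T$ as the cartesian product of $c$ chains of length $p$ together with the chains of $\overline{T}^*$, hence as a product of $\omega(n)$ chains whose lengths are the prime divisors of $n$ with multiplicity, giving (ii). For the full order, give the $\overline{T}^*$-coordinate top priority, breaking ties by the lexicographic order on the flag-coordinates in $V_0$; this refines $\preccurlyeq$.

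The substance of the proof is verifying (i). Given $t_1<t_2\preccurlyeq t_3$ in $T$, write $t_j=v_jt_j^*$ and let $t_4$ be defined by $Nt_4=Nt_1t_2^{-1}t_3$; passing to $H/V$ shows the $\overline{T}^*$-coordinate of $t_4$ is the $\overline{T}^*$-representative of $\overline{t_1^*}(\overline{t_2^*})^{-1}\overline{t_3^*}$. If $\overline{t_1^*}\ne\overline{t_2^*}$, then priority forces $\overline{t_1^*}<^*\overline{t_2^*}$, while $t_2\preccurlyeq t_3$ gives $\overline{t_2^*}\preccurlyeq^*\overline{t_3^*}$, so property (i) for $\overline{T}^*$ makes the $\overline{T}^*$-coordinate of $t_4$ strictly $\le^*$-smaller than that of $t_3$, whence $t_4<t_3$. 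If $\overline{t_1^*}=\overline{t_2^*}$, then $t_1^*=t_2^*$, so $t_1t_2^{-1}t_3=v_1v_2^{-1}t_3=(v_1-v_2+v_3)t_3^*$ (additive notation in $V$), which already lies in $T$ since $V_0$ is a subspace; thus $t_4$ shares its $\overline{T}^*$-coordinate with $t_3$, and it remains to show $v_1-v_2+v_3<v_3$ in the lexicographic order on $\{0,\dots,p-1\}^c$. Now $t_1<t_2$ together with equality of $\overline{T}^*$-coordinates forces $v_1<v_2$, while $v_2\preccurlyeq v_3$ is coordinatewise; at the first coordinate $i$ with $(v_1)_i\ne(v_2)_i$ one gets $(v_1)_i<(v_2)_i\le(v_3)_i$, so $0\le(v_1)_i-(v_2)_i+(v_3)_i<(v_3)_i$, with no reduction mod $p$ and (since $\mathbb{F}_p^c$-arithmetic is coordinatewise) no carrying between coordinates, which gives the claim.

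I expect the main obstacle to be precisely this check of (i): arranging the priority/tie-break structure of the full order so that both cases close, and noticing that the absence of a mod-$p$ ``overflow'' in $v_1-v_2+v_3$ rests on $\preccurlyeq$ being the \emph{coordinatewise} order (so that $(v_2)_i\le(v_3)_i$). The remaining ingredients---that $W$ and $V_0$ are subspaces, that $V_0T^*$ is a transversal, and that the chain-length tally matches the prime factorization of $n$---are routine, as is the base case $H=NV$ (where $\overline{T}^*$ is a single point and everything reduces to the $\mathbb{F}_p^c$ computation, degenerating when $n$ is prime to the inequalities $0\le t_1-t_2+t_3<t_3<p$).
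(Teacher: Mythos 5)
Your proof is correct, and it follows the natural (and, as far as I can tell, standard) route: peel off a chief factor $V$ of the soluble group $H$, realize the transversal as a twisted product $V_0T^*$ with flag/lexicographic coordinates on an $\mathbb{F}_p$-complement $V_0$ to $V\cap N$ in $V$, and give the quotient-coordinate top priority in the full order. The paper itself cites this as [BKR, Lemma 5.1] without reproducing a proof, and your reconstruction is faithful; the key technical point you isolate --- that $\preccurlyeq$ being the coordinatewise order forces $(v_2)_i\le(v_3)_i$ at the first differing coordinate, so $v_1-v_2+v_3$ suffers no mod-$p$ overflow there --- is exactly what makes property (i) close in the tied case $\overline{t_1^*}=\overline{t_2^*}$.
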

\begin{proof} Let $F$ be a subgroup of $G$ such that $F^{\Omega}$ is soluble and transitive. By \cite[Lemma 5.1]{BKR}, there exists a right transversal $\mathcal{T}$ for $F\cap H$ in $F$ such that the image ${\mathcal{T}}^{\Omega}$ has a partial order ${\preccurlyeq}'$ and a full order ${\leqslant}'$ satisfying \begin{enumerate}[(a)]\item Whenever $t_{1}$, $t_{2}$, $t_{3}\in \mathcal{T}$ with ${t_{1}}^{\Omega}{<'}{t_{2}}^{\Omega}{\preccurlyeq'} {t_{3}}^{\Omega}$, we have ${t_{4}}^{\Omega}{<'}{t_{3}}^{\Omega}$, where ${t_{4}}$ is the unique element of $\mathcal{T}$ such that $({t_{1}t_{2}^{-1}t_{3}})^{\Omega}\in ({F\cap H})^{\Omega}{t_{4}}^{\Omega}$.
\item With respect to this partial order, ${\mathcal{T}}^{\Omega}$ is a cartesian product of $k$ chains, of length $p_{1}$, $p_{2}$, $\hdots$, $p_{k}$, where $k=\omega(|F:F\cap H|)=\omega(|G:H|)=\omega(s)$, and $p_{1}$, $p_{2}$, $\hdots$, $p_{k}$ denote the (not necessarily distinct) prime divisors of $s$.\end{enumerate}

For $t_{1}$, $t_{2}\in \mathcal{T}$, say now that $t_{1}\preccurlyeq t_{2}$ if ${t_{1}}^{\Omega}{\preccurlyeq'} {t_{2}}^{\Omega}$, and $t_{1}\leqslant t_{2}$ if ${t_{1}}^{\Omega}{\leqslant'} {t_{2}}^{\Omega}$. Since ${F}^{\Omega}$ acts transitively on the set of cosets of $H$ in $G$, $\mathcal{T}$ is a right transversal for $H$ in $G$. By definition, (a) and (b) above imply that (i) and (ii) hold for this choice of $\preccurlyeq$ and $\leqslant$. This gives us what we need.\end{proof}

For the remainder of Section \ref{MainModuleSection} assume that $G^{\Omega}$ contains a soluble transitive subgroup, and fix $\mathcal{T}$ to be a right transversal for $H$ in $G$ as exhibited in Lemma \ref{TLemma}. Then we may write the induced module $W=V\uparrow^G_H$ as $W=\bigoplus_{t\in \mathcal{T}}V\otimes t$, where the action of $G$ is given by
$$(v\otimes t)^{ht'}=v^{h_1}\otimes t_1,$$ where $tht'=h_1t_1$, $h$, $h_1\in H$, $t$, $t'$, $t_1\in \mathcal{T}$. Thus, each element $w$ in $W$ may be written as $w=\sum_{t\in \mathcal{T}} v(w,t)\otimes t$, with uniquely determined coefficients $v(w,t)$ in $V$. 

\begin{Definition}[{\bf \cite{BKR}, Section 5}]\label{HeightDef} Let $w\in W$ be non-zero. The \emph{height} of $w$, written $\tau(w)$, is the largest element of the set $\left\{{t\in \mathcal{T}\text{ : }v(w,t)\neq 0}\right\}$, with respect to the full order $\leqslant$. Also, we define $\mu (w):=v(w, \tau(w))$. Thus, $\mu(w)$ is non-zero, and $v(w,t)=0$ whenever $t>\tau(w)$. The element $\mu(w)\otimes \tau(w)$ is called the \emph{leading summand} of $w$.\end{Definition}

\begin{Remark}\label{REMGT}In the language of Definition \ref{HeightDef}, Lemma \ref{TLemma} Part (i) states that if the height of $w$ is $t_{2}$, and if $t_{2}\preccurlyeq t_{3}$, then the height of $w^{t_{2}^{-1}t_{3}}$ is $t_{3}$. Further, the leading summand of $w^{t_{2}^{-1}t_{3}}$ is $\mu(w)\otimes t_{3}$.\end{Remark}
The formulation in Remark \ref{REMGT} leads to an important technical point.
\begin{Proposition}\label{a1Lemma} Let $M$ be a submodule of $W$. Then $M$ has a generating set $X$ with the following property: No subset $Y$ of $X$, whose image $\tau(Y)$ in $\mathcal{T}$ is a chain with respect to the partial order $\preccurlyeq$, can have more than 
$$\min\left\{\frac{ad-am}{R_{\mathbb{F}}(K')}+am,\chi\right\}$$ elements, where $\chi=\chi(K,V^\ast)$ denotes the number of orbits of $K$ on the nonzero elements of $V$.\end{Proposition}

Before proving Proposition \ref{a1Lemma}, we need a preliminary lemma.
\begin{Lemma}\label{FrobProp} A $K'$-composition series for $V$ contains at most $am$ factors isomorphic to the trivial module.\end{Lemma}
\begin{proof} Let $Q\le K'$ such that $Q^V$ is semisimple and $\chi(Q^{\Omega_1},\Omega_1)=m$. By Mackey's Theorem,
\begin{align}V\downarrow_{Q}=\left(U\uparrow^H_{H_1}\right)\downarrow_Q\cong \bigoplus_{i=1}^{m} U_{x_i},\end{align}\label{RE}
where $U_{x_i}:=(U\otimes x_i)\uparrow^Q_{Q\cap H_1^{x_i}}$, $\dim{U_{x_i}}=\dim{U}=a$, for each $i$, and $\sum_{j}|Q:Q\cap H_1^{x_i}|=|H:H_1|=d$. Since $Q^V$ is semisimple, the number of $Q$-composition factors of $U_{x_i}=(U\otimes x_i)\uparrow^Q_{Q\cap H_1^{x_i}}$ isomorphic to the trivial module $1_{Q}$ is precisely
$$\dim{{\Hom}_{\mathbb{F}[Q]}((U\otimes x_i)\uparrow^Q_{Q\cap H_1^{x_i}},1_{Q})}=\dim{{\Hom}_{\mathbb{F}[Q\cap H_1^{x_i}]}((U\otimes x_i),1_{Q\cap H_1^{x_i}})},$$ 
applying Frobenius Reciprosity. This is at most $\dim(U\otimes x_i)=\dim{U}=a$. The result now follows immediately from (\ref{RE}).\end{proof}

\begin{proof}[Proof of Proposition \ref{a1Lemma}] Set $e:=\frac{ad-am}{R_{\mathbb{F}}(K')}+am$, and let $X$ be a finite generating set for $M$, consisting of non-zero elements. Suppose that $Y:=\left\{w_0, w_1, \hdots, w_{e}\right\}$ is a subset of $X$ whose image under $\tau$ forms a chain in $\mathcal{T}$: Say $\tau(w_0)\preccurlyeq \tau(w_1)\preccurlyeq\hdots\preccurlyeq \tau(w_{e})$. 
 
Consider now the vectors $\mu(w_{0})$, $\mu(w_1)$, $\hdots$, $\mu(w_{e})$: For $1\le i\le e+1$ let $W_{i}$ denote the $K'$-module generated by $\mu(w_0)$, $\hdots$, $\mu(w_{i-1})$, and consider the series of $K'$-modules
\begin{align}\label{NNN} 0=:W_{0}\le W_{1}\le \hdots\le W_{e+1}\text{ }\end{align}
Suppose that $W_{i}<W_{i+1}$ for all $i$. Then the series (\ref{NNN}) can be extended to give a $K'$-composition series for $V$. Thus, Lemma \ref{FrobProp} implies that at most $am$ of the factors $W_{i+1}/W_{i}$ are trivial. Furthermore, the rest have dimension at least $R_{\mathbb{F}}(K')$. It follows that $\dim{W_{e+1}}= \sum_{i=1}^{e+1}\dim{W_{i}/W_{i-1}}\ge am+(e+1-am)R_{\mathbb{F}}(K')>ad$, which is a contradiction, since $\dim{V}=ad$.

Thus, we must have $\mu(w_{i})\in W_{i}$ for some $i$. In this case, 
$$\mu(w_{i})=\sum_{j=0}^{i-1}\sum_{k\in K'}\lambda_{j,k}\mu(w_{j})^{k},$$ for some scalars $\lambda_{j,k}$. Moreover, the element
$$x:=\sum_{j=0}^{i-1}\sum_{k\in K'}\lambda_{j,k}w_{j}^{k^{\tau(w_{j})}\tau(w_j)^{-1}\tau(w_i)}$$
of $M$ has the same leading summand as $w_{i}$, by Lemma \ref{TLemma} Part (i) (see also Remark \ref{REMGT}). Hence, either $x=w_{i}$ and $w_{i}$ may be removed from $X$, or $w_{i}$ may be replaced in $X$ by the element $w_{i}-x$, which has height strictly preceding $w_{i}$ in the full order $\leqslant$. In this way, the resulting (modified) set $X$ still generates $M$. This procedure can only be carried out a finite number of times, and when it can no longer be repeated, the (modified) generating set can have no more than $e$ elements. 

If $\chi\ge e$, then we are done, so assume that $\chi<e$. Let $v$ and $w$ be elements of $X$ whose images $\tau(v)$ and $\tau(w)$ are comparable (with respect to $\preccurlyeq$) in $\mathcal{T}$: Say $\tau(v)\preccurlyeq \tau(w)$. Suppose that $\mu(w)$ and $\mu(v)$ lie in the same $K$-orbit of $V$, and let $g\in K$ such that $\mu(w)^{g}=\mu(v)$. Since $K$ is normal in $G$, the leading summand of $w^{g}$ is $\mu(v)\otimes \tau(w)$. Thus, by replacing $w$ with $w^{g}$, we may assume that $\mu(v)=\mu(w)$. Then, using Lemma \ref{TLemma} Part (i) again, we see that $v^{\tau(v)^{-1}\tau(w)}$ has the same leading summand as $w$. Write $v^{\tau(v)^{-1}\tau(w)}=x+\mu(v)\otimes\tau(w)$, and $w=y+\mu(v)\otimes\tau(w)$, for $x$, $y\in V$, and let $u=y-x$. Then, we see that, as in the proof of \cite[Lemma 5.2]{BKR}, either $u=0$, and $w=v^{\tau(v)^{-1}\tau(w)}$ may be omitted from $X$, or $u\neq 0$, and $w=u+v^{\tau(v)^{-1}\tau(w)}$ may be replaced in $X$ by the element $u$, which has height strictly preceding $\tau(w)$ in the full order $\leqslant$. This way, the resulting set obtained from $X$ still generates $M$. The procedure outlined above can only be carried out a finite number of times, and when it can no longer be repeated, the (modified) generating set can contain no more than $\chi$ elements. This completes the proof.\end{proof} 

Before proving Theorem \ref{BKGARTheorem}, we note the following easy consequence of Dilworth's Theorem (\cite[Theorem 1.1]{Dil}):
\begin{Lemma}\label{DilCons} If a partially ordered set $P$ has no chain of cardinality greater than $k$, and no antichain of cardinality greater than $l$, then $P$ cannot have cardinality greater than $kl$.\end{Lemma}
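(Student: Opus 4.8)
The plan is to invoke Dilworth's Theorem in its chain-partition form. Since by hypothesis $P$ has no antichain of cardinality greater than $s$, the width of $P$ is at most $s$, and so Dilworth's Theorem (\cite{Dil}, Theorem 1.1) provides a partition $P = C_{1}\sqcup C_{2}\sqcup\cdots\sqcup C_{k}$ of $P$ into chains with $k\le s$.

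Next I would use the other hypothesis: each $C_{i}$ is a chain in $P$, and no chain of $P$ has more than $r$ elements, so $|C_{i}|\le r$ for every $i$. Combining the two observations gives $|P| = \sum_{i=1}^{k} |C_{i}| \le kr \le sr$, which is exactly the asserted bound.

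There is essentially no obstacle here; the one point worth a remark is the choice of duality. We want the partition of $P$ into at most $(\text{max antichain size})$ chains, which is precisely Dilworth's Theorem as cited; the dual formulation (Mirsky's Theorem, partitioning $P$ into at most $(\text{max chain size})$ antichains, each of size at most $s$) would yield the same inequality $|P|\le rs$, but it is not the result referenced, so I would phrase the argument via the chain partition. No numerical estimates are needed, so the proof is a two-line deduction.
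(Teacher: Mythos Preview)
Your proposal is correct and matches the paper's approach: the paper does not give a detailed proof but simply states the lemma as an easy consequence of Dilworth's Theorem (\cite{Dil}, Theorem 1.1), which is exactly the chain-partition argument you spell out.
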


\begin{proof}[Proof of Theorem \ref{BKGARTheorem}] Let $\mathcal{T}$ be a right transversal for $H$ in $G$ with full and partial orders $\leqslant$ and $\preccurlyeq$, as in Lemma \ref{TLemma}. Now define a partial order on the elements of $W$ as follows: First, for each $t\in \mathcal{T}$, choose a full order on the elements of $W$ of height $t$. Now, for $w_1$ and $w_2$ in $W$, say that $w_1< w_2$ if $\tau(w_1)$ is less than $\tau(w_2)$ in $(\mathcal{T},\preccurlyeq)$, or if $\tau(w_1)=\tau(w_2)$ but $w_1$ precedes $w_2$ in the full order chosen for elements of height $\tau(w_1)$.

Then $\tau:W\rightarrow \mathcal{T}$ is a poset homomorphism which takes incomparable elements to incomparable elements, so no antichain of its domain can have cardinality greater than $\ws{(s)}$, by Lemmas \ref{poset1} and \ref{TLemma} Part (ii). Let $X$ be a generating set for $M$ with the properties guaranteed by Proposition \ref{a1Lemma}. Then no chain in $X$ can have more than $\min\{\frac{ad-am}{R_{\mathbb{F}}(K')}+am,\chi\}$ elements. Lemma \ref{DilCons} then implies that
$$|X|\le \min\left\{\frac{ad-am}{R_{\mathbb{F}}(K')}+am,\chi\right\}\ws{(s)}\le \min\left\{\frac{ad-am}{R_{\mathbb{F}}(K')}+am,\chi\right\}\left\lfloor\frac{bs}{\sqrt{\log{s}}}\right\rfloor,$$
where the second inequality follows from Theorem \ref{usefulposet}. If $s=p^{t}$ for $p$ prime, then
$$|X|\le \min\left\{\frac{ad-am}{R_{\mathbb{F}}(K')}+am,\chi\right\}\left\lfloor\frac{bp^{t}}{\sqrt{{t(p-1)}}}\right\rfloor,$$
again by Lemma \ref{DilCons} and Theorem \ref{usefulposet}. This completes the proof. 
\end{proof}

\subsubsection{Induced modules for finite groups: The general case}\label{pmodlemmaSection}
In this section, we prove a weaker form of Theorem \ref{BKGARTheorem} for general finite groups (i.e. those $G$ for which $G^{\Omega}$ does not necessarily contain a soluble transitive subgroup). We retain the notation introduced at the beginning of Section \ref{MainModuleSection}. 

We begin with a definition. Recall the definitions of $\ws(s)$, $s_{p}$, and $\lpp{(s)}$ from Definitions \ref{KDef} and \ref{KDef2}.  
\begin{Definition}\label{EDEF} For a prime $p$, set
$$E(s,p):=\min\left\{{\left\lfloor \frac{bs}{\sqrt{(p-1)\log_{p}{s_{p}}}}\right\rfloor,\frac{s}{\lpp{(s/s_{p})}}}\right\}\text{ and }E_{sol}(s,p):=\min\left\{\ws(s),s_{p}\right\}$$
where we take $\left\lfloor bs/\sqrt{(p-1)\log_{p}{s_{p}}}\right\rfloor$ to be $\infty$ if $s_{p}=1$. \end{Definition}

\begin{Proposition} Let $p$ be prime. Then $E_{sol}(s,p)\le E(s,p)$. \end{Proposition}
\begin{proof} By Theorem \ref{usefulposet} we have $\ws(s)\le \left\lfloor \frac{bs}{\sqrt{(p-1)\log_{p}{s_{p}}}}\right\rfloor$. Also, it is clear that $s_p\le \frac{s}{\lpp{(s/s_{p})}}$. The result follows. \end{proof}

\begin{Remark}\label{JustAbove} For any finite group $G$ and any $G$-module $M$, $d_G(M)$ is bounded above by $\chi(G,M^\ast)$.\end{Remark}

For the remainder of this section, we will make a further assumption: that the field $\mathbb{F}$ has characteristic $p>0$. We are now ready to state and prove the main result of this section.
\begin{Theorem} \label{pmodlemma} For a prime $q\neq p$, let $P_{q}$ be a Sylow $q$-subgroup of $G$. Also, let $P'$ be a maximal $p'$-subgroup of $G$. Let $M$ be a submodule of the induced module $W=V\uparrow^{G}_H$. \begin{enumerate}[(i)]
\item If $G$ is soluble, then $$d_{G}(M)\le \min\left\{\frac{ad-a\chi(P'\cap K,\Omega_1)}{R_{\mathbb{F}}(P'\cap K)}+a\chi(P'\cap K,\Omega_1),\chi(P'\cap K,V^{\ast})\right\}s_{p}.$$
\item Let $N$ be a subgroup of $G$ such that $N^{\Omega}$ is soluble, and let $s_{i}$, $1\le i\le t$, be the sizes of the orbits of $N$ on $\Omega$. Then\begin{enumerate}[(a)]
\item We have \begin{align*}d_{G}(M)\le &\min\left\{\frac{ad-a\chi(N\cap P'\cap K,\Omega_1)}{R_{\mathbb{F}}(N\cap P'\cap K)}+a\chi(N\cap P'\cap K,\Omega_1),\right. \\
&\left. \chi(N\cap P'\cap K,V^{\ast})\vphantom{\frac{1}{2}}\right\}\times\sum_{i=1}^{t}\ws{(s_i)}.\end{align*}
\item If $N$ is soluble, and $P_N'$ is a $p$-complement in $N$, then 
 \begin{align*}d_{G}(M)\le &\min\left\{\frac{ad-a\chi(P_N'\cap K,\Omega_1)}{R_{\mathbb{F}}(P_N'\cap K)}+a\chi(P_N'\cap K,\Omega_1),\right. \\
&\left. \chi(P_N'\cap K,V^{\ast})\vphantom{\frac{1}{2}}\right\}\times\sum_{i=1}^{t}E_{sol}(s_i,p).\end{align*}\end{enumerate}
\item $d_{G}(M)\le \min\left\{\frac{ad-a\chi(P_q\cap K,\Omega_1)}{R_{\mathbb{F}}(P_{q}\cap K)}+a\chi(P_q\cap K,\Omega_1),\chi(P_{q}\cap K,V^\ast)\right\}s/s_{q}$. 
\item Assume that $s_p>1$. Then $$d_{G}(M)\le \min\left\{\frac{ad-am}{R_{\mathbb{F}}(K')}+am,\chi(K,V^{\ast})\right\}\left\lfloor\frac{bs}{\sqrt{\log{s_p}}}\right\rfloor.$$
\end{enumerate} 
\end{Theorem}
\begin{proof} The proof is based on the idea of Lucchini et al. used in the  proof of \cite[Lemma 4]{LucMenMor}. Let $Q$ be a subgroup of $G$, and choose a full set $\left\{{x_{1},x_{2},\hdots,x_{t}}\right\}$ of representatives for the $(H,Q)$-double cosets in $G$. Also, for $1\le i\le t$, put $s_{i}:=|Q:Q\cap H^{x_{i}}|$ (note that, by $H^{x_{i}}$, we mean, as usual, the conjugate subgroup $x_{i}^{-1}Hx_{i}$). By Mackey's Theorem we have \begin{align}\label{GT1} W\downarrow_Q=(V\uparrow^{G}_H)\downarrow_{Q}=\bigoplus_{i=1}^{t} V_{x_{i}} \end{align} where $V_{x_{i}}\cong (V\otimes x_{i})\uparrow^{Q}_{Q\cap H^{x_i}}$. Comparing dimensions of the left and right hand side of (\ref{GT1}) above, we get
$$ads=\dim{W}=\sum_{i=1}^{t}ad|Q:Q\cap H^{x_{i}}|=ad\sum_{i=1}^{t}s_{i}$$ 
so that $\sum_{i=1}^{t}s_{i}=s$. Clearly, the $s_{i}$ represent the sizes of the orbits of $Q$ on the right cosets of $H$ in $G$. 

Next, for $1\le i\le t$, set $V_{i}:=V_{x_{1}}\oplus V_{x_{2}}\oplus \hdots \oplus V_{x_{i}}$. Then, we have a chain $0=V_{0}\le V_{1}\le \hdots \le V_{t}=W$ of $Q$-submodules of $W$. This allows us to define the chain of $Q$-modules $0=M_{0}\le M_{1}\le \hdots \le M_{t}=M$, where $M_{i}:=M\cap V_{i}$. Furthermore, in this case, the quotient $M_{i}/M_{i-1}$ is (isomorphic to) a $Q$-submodule of $V_{x_{i}}$. Hence
\begin{align}\label{GT2} d_{G}(M)\le d_{Q}(M)\le\sum_{i=1}^{t} d_{Q}(M_{i}/M_{i-1}).\end{align}  

Note that $V\otimes x_{i}$ is isomorphic to an induced module $(U\otimes x_{i})\uparrow^{H^{x_{i}}}_{H_{1}^{x_i}}$. Hence, Mackey's Theorem implies that $(V\otimes x_{i})\downarrow_{Q\cap K}$ is isomorphic to a direct sum
\begin{align}\label{GT3} (V\otimes x_{i})\downarrow_{Q\cap K}\cong\bigoplus_{j} U_{x_{i,j}},\end{align}where $U_{x_{i,j}}\cong (U\otimes x_{i,j})\uparrow^{Q\cap K}_{Q\cap K\cap H_1^{x_{i,j}}} $ is an induced module for $Q\cap K$, and\\ $\sum_{j}|Q\cap K:Q\cap K\cap {H_1}^{x_{i,j}}|=|H^{x_{i}}:H_{1}^{x_i}|=d$.

Suppose that $(|Q|,p)=1$. Then each $V_{x_{i}}$ is a semisimple $\mathbb{F}[Q]$-module, so \begin{align*}
d_{Q}(M_{i}/M_{i-1})&\le d_{Q}(V_{x_{i}})\\
&\le d_{Q\cap H^{x_i}}(V\otimes x_{i})\\
&\le d_{Q\cap K}(V\otimes x_{i})\\
&\le \sum_{j}d_{Q\cap K}(U_{x_{i,j}})\\
&\le \sum_{j}\min\left\{\frac{a|Q\cap K:Q\cap K\cap H_1^{x_{i,j}}|-a}{R_{\mathbb{F}}(Q\cap K)}+a,\chi(Q\cap K,\right.\\
&\left.[U_{x_{i,j}}]^\ast\vphantom{\frac{1}{2}})\right\}\\
&\le \min\left\{\sum_j\frac{a|Q\cap K:Q\cap K\cap H_1^{x_{i,j}}|-a}{R_{\mathbb{F}}(Q\cap K)}+a,\sum_j\chi(Q\cap K,\right.\\
&\left. [U_{x_{i,j}})]^\ast\vphantom{\frac{1}{2}})\right\}\\
&= \min\left\{\frac{ad-a\chi(Q\cap K,\Omega_1)}{R_{\mathbb{F}}(Q\cap K)}+a\chi(Q\cap K,\Omega_1),\chi(Q\cap K,{V}^\ast)\right\}
\end{align*}
The fourth inequality above follows from (\ref{GT3}), while the fifth follows from Corollary \ref{BigCor} and Remark \ref{JustAbove}. Thus 
\begin{align}\label{GT4} d_{G}(M)\le \min\left\{\frac{ad-a\chi(Q\cap K,\Omega_1)}{R_{\mathbb{F}}(Q\cap K)}+a\chi(Q\cap K,\Omega_1),\chi(Q\cap K,V^\ast)\right\}t \end{align}
 by (\ref{GT2}).

Write $s_{p}:=p^{\beta}$ and $s_{q}:=q^{\alpha}$. Also, write $s=p^{\beta}q^{\alpha}k$ and $|H|=p^{\delta}q^{\gamma}l$, where $|H|_{p}=p^{\delta}$, $|H|_{q}=q^{\gamma}$. We are now ready to prove the theorem.\begin{enumerate}[(i)]
\item Suppose that $G$ is soluble, and take $Q:=P'$ to be a $p$-complement in $G$. Then $|Q|=q^{\alpha+\gamma}kl$. Hence, $s_{i}=|Q:Q\cap H^{x_{i}}|\ge q^{\alpha}k=s/s_{p}$. Part (i) now follows from (\ref{GT4}), since $s=\sum_{i=1}^{t} s_{i}\ge ts/s_{p}$. 
\item Take $Q:=N$. By Theorem \ref{BKGARTheorem}, we have 
\begin{align*}d_Q(M_i/M_{i-1})\le &\min\left\{\frac{ad-a\chi(Q\cap P'\cap K,\Omega_1)}{R_{\mathbb{F}}(Q\cap P'\cap K)}+a\chi(Q\cap P'\cap K,\Omega_1),\right. \\
&\left. \chi(Q\cap P'\cap K,V^\ast\vphantom{\frac{1}{2}})\right\}\ws(s_i). \end{align*}
Part (a) of (ii) now follows from (\ref{GT2}). Next, assume that $N$ is soluble, with a $p$-complement $P_N'$. Then 
\begin{align*}d_Q(M_i/M_{i-1})\le &\min\left\{\frac{ad-a\chi(Q\cap P'\cap K,\Omega_1)}{R_{\mathbb{F}}(Q\cap P'\cap K)}+a\chi(Q\cap P'\cap K,\Omega_1),\right.\\
&\left.\chi(Q\cap P'\cap K,V^\ast\vphantom{\frac{1}{2}})\right\}(s_i)_p\end{align*}
by Part (i). Also, $P_N'=N\cap P'$ for some maximal $p'$-subgroup $P'$ of $G$, so Part (b) follows from (\ref{GT2}) by combining the above with Part (ii)(a).
\item In the general case, take $Q:=P_{q}$. Then $|Q|=q^{\alpha+\gamma}$, so $s_{i}=|Q:Q\cap H^{x_{i}}|\ge q^{\alpha}$. Also, $s=\sum_{i=1}^{t} s_{i}\ge tq^{\alpha}=ts_{q}$. Part (iii) then follows from (\ref{GT4}). 
\item Here, we have $\beta>0$ since $s_p>0$. Let $P$ be a Sylow $p$-subgroup of $G$, and set $Q=KP$. Then $s_{i}=|Q:Q\cap H^{x_{i}}|=|QH^{x_{i}}|/|H^{x_{i}}|\ge |PH^{x_{i}}|/|H^{x_{i}}|=|P:P\cap H^{x_{i}}|\ge p^{\beta}$, for each $i$. Since $K\le \core_{Q}(Q\cap H^{x_{i}})$, we have $\chi(\core_Q{(Q\cap H^{x_{i}})},(V\otimes x_{i})^\ast)\le \chi(K,V^\ast)=:\chi$ for each $i$. Then (\ref{GT2}) and Theorem \ref{BKGARTheorem} give
\begin{align*}d_{G}(M) &\le \sum_{i=1}^{t} \min\left\{\frac{ad-am}{R_{\mathbb{F}}(K')}+am,\chi\right\}\left\lfloor\frac{bs_{i}}{\sqrt{\log{s_{i}}}}\right\rfloor\\
&\le \sum_{i=1}^{t} \min\left\{\frac{ad-am}{R_{\mathbb{F}}(K')}+am,\chi\right\}\left\lfloor \frac{bs_{i}}{\sqrt{\beta}}\right\rfloor \\
&\le \min\left\{\frac{ad-am}{R_{\mathbb{F}}(K')}+am,\chi\right\}\left\lfloor \sum_{i=1}^{t} \frac{bs_{i}}{\sqrt{\beta}}\right\rfloor \\
&=\min\left\{\frac{ad-am}{R_{\mathbb{F}}(K')}+am,\chi\right\}\left\lfloor\frac{bs}{\sqrt{\beta}}\right\rfloor \end{align*}
This proves (iv).\end{enumerate}
\end{proof}

Since $\frac{ad-f}{e}+f \le ad$ for positive integers $e$ and $f$, the following corollary is immediate.
\begin{Corollary}\label{pmodlemmaMainCor1} Let $M$ be a submodule of $W$. Also, let $q$, $P_q$ and $P'$ be as in Theorem \ref{pmodlemma}. Then\begin{enumerate}[(i)]
\item If $G$ is soluble, then $d_{G}(M)\le \min\left\{ad,\chi(P'\cap K,V^\ast)\right\}s_{p}$.
\item Let $N$ be a subgroup of $G$ such that $N^{\Omega}$ is soluble, and let $s_{i}$, $1\le i\le t$, be the sizes of the orbits of $N$ on $\Omega$. Then \begin{enumerate}[(a)]
\item We have $d_{G}(M)\le \min\left\{ad,\chi(N\cap P'\cap K,V^\ast)\right\}\sum_{i=1}^{t}\ws(s_i).$ 
\item If $N$ is soluble, and $P_N'$ is a $p$-complement in $N$, then $$d_{G}(M)\le \min\left\{ad,\chi(P_N'\cap K,V^\ast)\right\}\sum_{i=1}^{t}E_{sol}(s_i,p).$$\end{enumerate} 
\item $d_{G}(M)\le \min\left\{ad,\chi(P_{q}\cap K,V^\ast)\right\}s/s_{q}$. 
\item $d_{G}(M)\le \min\left\{ad,\chi(K,V^{\ast})\right\}\left\lfloor\frac{bs}{\sqrt{\log{s_p}}}\right\rfloor$.
\end{enumerate} \end{Corollary}

We also record the following, which is an immediate consequence of Corollary \ref{pmodlemmaMainCor1}. Note that Theorem \ref{pmodlemmaMainCorp}
\begin{Corollary}\label{pmodlemmaMainCor} Define $E'$ to be $E_{sol}$ if $G^{\Omega}$ contains a soluble transitive subgroup, and $E':=E$ otherwise. Let $M$ be a submodule of $W$. Then $d_G(M)\le adE'(s,p)$.\end{Corollary}

Note that Theorem \ref{pmodlemmaMainCorp} follows from Corollary \ref{pmodlemmaMainCor}. Using the definition of $E(s,p)$, and Lemma \ref{primecount}, we also deduce the following.
\begin{Corollary}\label{pq} Let $M$ be a submodule of $W$, and fix $0<\alpha<1$. \begin{enumerate}[(i)]
\item If $s_{p}\ge s^{\alpha}$, then $d_{G}(M)\le adE(s,p)\le ad\left\lfloor\frac{bs\sqrt{\frac{1}{\alpha}}}{\sqrt{\log{s}}}\right\rfloor$;
\item If $s_{p}\le s^{\alpha}$, then $d_{G}(M)\le adE(s,p)\le ad\left\lfloor \frac{\frac{1}{1-\alpha}s}{c'\log{s}}\right\rfloor$;
\item We have 
$$d_{G}(M)\le adE(s,p)\le
\begin{cases}
\left\lfloor \frac{2ads}{c'\log{s}}\right\rfloor, & \text{if }2\le s\le 1260,\\
\left\lfloor \frac{adbs\sqrt{2}}{\sqrt{\log{s}}}\right\rfloor, & \text{if }s\ge 1261.
\end{cases}
$$\end{enumerate}
\end{Corollary}
\begin{proof} Part (i) follows immediately from the definition of $E(s,p)$, while Part (ii) follows from the definition and Lemma \ref{primecount}. Finally, set $\alpha:=1/2$. Then $$ \frac{2ads}{c'\log{s}}\le \frac{adbs\sqrt{2}}{\sqrt{\log{s}}}$$
for $s\ge 1261$, so Part (iii) also follows.\end{proof}

The following is also immediate, from Part (ii) of Theorem \ref{pmodlemma}.
\begin{Corollary}\label{pmodlemmacor2} Let $M$ be a submodule of $W$. If $G$ contains a soluble subgroup $N$, acting transitively on $\Omega$, then \begin{align*}d_{G}(M)\le &\min\left\{\frac{ad-a\chi(P_N'\cap K,\Omega_1)}{R_{\mathbb{F}}(P_N'\cap K)}+a\chi(P_N'\cap K,\Omega_1),\chi(P_N'\cap K,V^\ast)\right\}\\
&\times E(s,p)\end{align*}
where $P_N'$ is a $p$-complement in $N$.\end{Corollary}

\subsection{An application to induced modules for bottom heavy groups}
The proofs of the main results of this paper will usually only require the bounds on $d_G(M)$ from Corollary \ref{pmodlemmaMainCor1}. For a specific case of the proof of Theorem \ref{TransOrderTheorem} however, we will need the stronger bounds provided by Theorem \ref{pmodlemma}. This case is the `bottom heavy case', which we will now define. Throughout, we retain the notation introduced at the beginning of Section \ref{MainModuleSection}. In particular, $H$ is a subgroup of $G$ of index of index $s\ge 2$, $H_1$ is a subgroup of $H$ of index $d\ge 1$, $\Omega$ is the set of right cosets of $H$ in $G$, $\Omega_1$ is the set of right cosets of $H_1$ in $H$, and $K:=\Ker_G(\Omega)$. Note that we also continue to assume that the field $\mathbb{F}$ has characteristic $p>0$. 
\begin{Definition}\label{HypA} Assume that $K^{\Omega_1}$, viewed as a subgroup of $\Sym{(d)}$, contains $\Alt(d)$. Then we say that the triple $(G,H,H_{1})$ is \emph{bottom heavy}.\end{Definition}

Before stating the main result of this section, we introduce Vinogradov notation: we will write 
$$A\ll B$$
to mean $A=O(B)$. The main result can now be stated as follows.  
\begin{Proposition}\label{MainModuleTheorem} Assume that $d\ge 5$ and that $(G,H,H_{1})$ is bottom heavy. Let $M$ be a submodule of $W$. Then\begin{enumerate}[(i)]
\item $d_{G}(M)\le 2as$, and;
\item If $s_{p}>1$, then $d_{G}(M)\ll \frac{as}{\sqrt{\log{s_{p}}}}$.\end{enumerate}\end{Proposition}

Before proving Proposition \ref{MainModuleTheorem}, we require the following:
\begin{Proposition}\label{IrrApp}  Assume that $(G,H,H_{1})$ is bottom heavy and that $d\ge 5$. Choose $K'$ to be a subgroup of $K$ minimal with the property that $K'^{\Omega_1}\cong \Alt(d)$. Then a $K'$-composition series for $V\downarrow_{K'}$ has at most $2a$ factors isomorphic to the trivial $K'$-module.\end{Proposition}
\begin{proof} By the minimality of $K'$, we have $C:=\core_{H}(H_{1})\cap K'\le \Phi(K')$, and hence $C$ is soluble. Let $E$ be a subgroup of $K'$ containing $C$ such that $E/C$ is soluble and, viewed as a subgroup of $\Sym{(d)}$, has at most two orbits, such that each orbit is of $p'$-length (such a subgroup exists by Lemma \ref{AltOrbits}). Then $E$ is soluble, so we may choose a $p$-complement $F$ in $E$. Then $F/F\cap C$ also has at most two orbits (and each $F$-orbit has $p'$-length).

Next, consider the $F$-module $X:=V\downarrow_{F}\cong U\uparrow_{H_{1}}^H\downarrow_{F}$. Since $F\le K'$, it suffices to prove that $X$ has at most $2a$ trivial composition factors. To see this, note that since $F$ has at most two orbits on $\Omega_1$ (i.e. the cosets of $H_{1}$ in $H$), represented by $x_{1}$ and $x_{2}$, say, Mackey's Theorem yields
$$X\cong X_{1}\oplus X_{2}\text{ or } X\cong X_{1}$$
where $X_{i}\cong (U\otimes x_{i})\uparrow^{F}_{F\cap H_{1}^{x_{i}}}$. Now, since $F$ has $p'$-order, $X_{i}$ is a semisimple $F$-module. Hence, the number of trivial factors in an $F$-composition series for $X_{i}$ is precisely the number of trivial summands of $X_{i}$, which is
$$\dim{{\Hom}_{\mathbb{F}[F]}(X_{i},1_{F})},$$ 
where $1_{F}$ denotes the trivial $F$-module. By Frobenius Reciprosity, this is equal to $$\dim{{\Hom}_{\mathbb{F}[F\cap H_1^{x_{i}}]}(U\downarrow_{F\cap H_1^{x_{i}}},1_{F\cap H_{1}^{x_i}})}\le \dim{U}=a.$$ The claim follows.\end{proof}

\begin{proof}[Proof of Proposition \ref{MainModuleTheorem}] Choose $K'$ to be a subgroup of $K$ minimal with the property that $K'^{\Omega_1}\cong \Alt(d)$. Then \begin{align}\label{FM} {\core}_H(H_1)\cap K'\le \Phi(K'). \end{align}
 Hence, since $$\Alt(d)\cong K'^\Omega\cong K'/{\core}_H(H_1)\cap K',$$ Proposition \ref{IrrProp} applies: $R_{\mathbb{F}}(K')\ge \ol{R}(\Alt(d))$. Note also that $m\le 2$ by Lemma \ref{AltOrbits}. Since $d\ll \ol{R}(\Alt(d))$ (see \cite[Proposition 5.3.7]{KleidLie}), Part (ii) now follows from Theorem \ref{pmodlemma} Part (iv).

We now prove (i). It follows from Lemma \ref{AltOrbits} that $K'$ has a subgroup $N$ such that $N^{\Omega_1}$ is soluble and has at most $2$ orbits. Furthermore, each orbit has $p'$-length. Also, $N$ is soluble, by (\ref{FM}). 

We now want to apply Corollary \ref{pmodlemmaMainCor1} Part (ii)(b), with $(G,H,H_1,V,\Omega)$ replaced by $(H,H_1,H_1,U,\Omega_1)$ (also, $(a,s,d)$ is replaced by $(a,d,1)$): let $d_i$, for $i\le 2$, denote the lengths of the $N^{\Omega_1}$ orbits. Then 
$$E_{sol}(d_i,p)\le (d_i)_p=1,$$
so $E_{sol}(d_i,p)=1$. Hence for each $H$-submodule $M'$ of the induced module $V={U\uparrow^{H}_{H_{1}}}$, we have
$$d_{H}(M')\le a\sum_{i=1}^{t}E_{sol}(d_i,p)\le 2a.$$
Since $M$ is a submodule of 
$$U\uparrow^G_{H_1}\cong V\uparrow^G_H\cong \sum_{i=1}^{s}V\otimes t_i$$
where each $V\otimes t_i$ is isomorphic, as an $H$-module, to $V$, the result now follows.\end{proof}

\section{Minimal generation of transitive permutation groups}\label{TransChapter}
In this section, we restate and prove the first main result of this paper, which is stated as Theorem \ref{TransTheorempre} in Section 1. The theorem follows in the primitive case from Theorem \ref{derekthm}, so this section deals predominantly with the case when $G\le \Sym(n)$ is imprimitive. In this case, $G$ is a large subgroup of a wreath product $R\wr S$, where $R$ is primitive of degree $r\ge 2$, $S$ is transitive of degree $s\ge 2$, and $n=rs$. Due to the nature of our bounds, the most difficult cases to deal with are when $R=\Sym(2)$ or $R=\Sym(4)$, i.e. when $G$ has a minimal block of cardinality either $2$ or $4$. (Essentially, this is because $\Sym(2)$ and $\Sym(4)$ have large composition lengths relative to their degree.) We deal with the $\Sym(4)$ case in Corollary \ref{S4NewBoundCor}; the idea being that we can use the transitive action of the Sylow $3$-subgroup in $\Sym(4)$ on the non-identity elements of the Klein $4$-group $V\unlhd \Sym(4)$ to reduce the contribution of $V$ to our bounds (this is the primary reason we include the invariant $\chi$ in our bounds in Section \ref{ModuleChapter}). 

However, no such option is available to us when $R\cong \Sym(2)$, since $\Sym(2)$ is abelian. If $G$ has another minimal block, of cardinality larger than $2$, then we can avoid the problem by using this block instead. However, we cannot do this if all minimal blocks for $G$ have cardinality $2$, so assume that this is the case. Then, as we will prove in Section 5.2 below, we have $d(G)\le E(s,2)+d(S)$. Now, since we just need to bound $d(S)$, we apply the same methods to the transitive group $S\le \Sym(s)$. 

Apart from finitely many cases, our methods yield the upper bound we want: the only problems occur when we ``repeatedly get" blocks of cardinality $2$. This is encapsulated in the following non-standard definition.  
\begin{Definition}\label{2BlockDef} Let $G$ be a transitive permutation group, and let $$X:=(R_1,R_2,\hdots,R_t)$$ be a tuple of primitive components for $G$, where each $R_i$ has degree $r_i\ge 2$. Define 
\begin{align*}{\bl}_{X,2}(G) &:=\min\left\{i\text{ : }r_i\neq 2\right\}-1\text{, and }\\
{\bl}_{2}(G) &:=\min\left\{{\bl}_{X,2}(G)\text{ : }X\text{ a tuple of primitive components for }G\right\}.\end{align*}
We call ${\bl}_{2}(G)$ the \emph{$2$-block number} of $G$.\end{Definition} 
Alternatively, the $2$-block number of a transitive permutation group $G$ can be defined inductively as follows: if $G$ is primitive, or if $G$ is imprimitive with a minimal block of cardinality greater than $2$, then set ${\bl}_{2}(G):=0$. Otherwise, $G$ is imprimitive and all minimal blocks for $G$ have cardinality $2$. Let $\Delta$ be such a minimal block, and let $\Gamma:=\{\Delta^g\text{ : }g\in G\}$ be the set of $G$-translates of $\Delta$. Also, let $K:=\Ker_G(\Gamma)$. Then define ${\bl}_{2}(G):=1+\bl_{2}(G/K)$.

For example, a transitive $2$-group $G$ of degree $2^k$ will have $\bl_2(G)=k$. In other words, any tuple of primitive components for $G$ will consist entirely of $\Sym(2)$s. This is because for any prime $p$, any minimal block of any transitive $p$-group has cardinality $p$.

\begin{Remark} If $\bl_{2}(G)\ge 1$, then $G$ has a block of size $2^{\bl_{2}(G)}$, by Remark \ref{WreathBlockRemark}.\end{Remark} 

We can now restate Theorem \ref{TransTheorempre} more precisely as follows.
\begin{Theorem}\label{TransTheorem} Let $G$ be a transitive permutation group of degree $n\ge 2$. Then\begin{enumerate}[(1)]
\item $d(G)\le \left\lfloor \frac{cn}{\sqrt{\log{n}}}\right\rfloor,$where $c:=1512660\sqrt{\log{(2^{19}15)}}/(2^{19}15)=0.920581\hdots$.
\item $d(G)\le \left\lfloor \frac{c_{1}n}{\sqrt{\log{n}}}\right\rfloor,$
where $c_{1}:=\sqrt{3}/2=0.866025\hdots$, unless each of the following conditions hold:\begin{enumerate}[(i)]
\item $n=2^{k}v$, where $v=5$ and $17\le k\le 26$, or $v=15$ and $15\le k\le 35$; 
\item $G$ contains no soluble transitive subgroups; and
\item $\bl_2(G)\ge f$, where $f$ is specified in the middle column of Table A.2 (see Appendix A).\end{enumerate}
In these exceptional cases, the bounds for $d(G)$ in Table A.2 hold.\end{enumerate}\end{Theorem}

Recall that by ``$\log$", we always mean $\log$ to the base $2$. The following is immediate from Theorem \ref{TransTheorem}. Note also that Corollary \ref{TransTheoremCor2} follows immediately from Theorem \ref{TransTheorem}.

As can be seen from the proof of Theorem \ref{TransTheorem}, and the statement of the theorem itself, the cases when $\bl_2(G)$ is large are the most difficult to deal with using our methods. We believe that the finite number of exceptions given in Theorem \ref{TransTheorem} Part (2) are not exceptions at all, that is, we believe that the bound $d(G)\le \lfloor c_{1}n/\sqrt{\log{n}}\rfloor$ should hold for all $n$ and all $G$. 

Note also that, as shown in \cite{KovNew}, the bounds in our results are of the right order. Moreover, the infimum of the set of constants $\overline{c}$ satisfying $d(G)\le \overline{c}n/\sqrt{\log{n}}$, for all soluble transitive permutation groups $G$ of degree $n\ge 2$, is the constant $c_{1}$ in Theorem \ref{TransTheorem}, since $d(G)=4$ when $n=8$ and $G\cong D_{8}\circ D_{8}$. We conjecture that the best ``asymptotic" bound, that is, the best possible upper bound when one is permitted to exclude finitely many cases, is $d(G)\le \lfloor \widetilde{c}n/\sqrt{\log{n}}\rfloor$, where $\widetilde{c}$ is some constant satisfying $b/2\le \widetilde{c}<b=\sqrt{2/\pi}$ (see Example \ref{TransExample} for more details).

In Section \ref{WreathAppSection} we discuss an application of the results of Section \ref{ModuleChapter} to wreath products. We reserve Section \ref{TransProofSection} for the proof of Theorem \ref{TransTheorem}.

\subsection{An application of the results in Section 4 to wreath products}\label{WreathAppSection}
We first make the following easy observation.
\begin{Proposition}\label{ISD} Let $A=T_{1}\times T_{2}\times\hdots\times T_{f}$, where each $T_{i}$ is isomorphic to the nonabelian finite simple group $T$. Suppose that $M\le A$ is a subdirect product of $A$, and suppose that $M'\unlhd M$ is also a subdirect product of $A$. Then $M'=M$.\end{Proposition}
\begin{proof} We prove the claim by induction on $f$, and the case $f=1$ is trivial, so assume that $f>1$. Since $M$ is subdirect, each $M\cap T_{i}$ is normal in $T_{i}$. If $M=A$, then since the only normal subgroups of $A$ are the groups $\prod_{i\in Y} T_{i}$, for $Y\subseteq \{1,\hdots,f\}$, the result is clear. So assume that $M\cap T_{i}=1$ for some $i$. Then $M'\cap T_{i}=1$, and $M'T_{i}/T_{i}$ and $MT_{i}/T_{i}$ are subdirect products of $\prod_{j\neq i} T_{j}$. It follows, using the inductive hypothesis, that $M'T_{i}=MT_{i}$. Hence $M'=M$, since $M\cap T_{i}=1$, and the proof is complete. \end{proof}

We also need the following result of Lucchini and Menegazzo.
\begin{Theorem}[{\bf \cite{Luc2} and \cite{Luc3}}]\label{MinTheorem} Let $L$ be a proper minimal normal subgroup of the finite group $G$. Then $d(G)\le d(G/L)+1$. Furthermore, if $L$ is the unique minimal normal subgroup of $G$, then $d(G)\le \max\left\{2,d(G/L)\right\}$.\end{Theorem}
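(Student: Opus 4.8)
The plan is to prove both inequalities by the classical lift-and-extend method: lift a generating set of $G/M$ to $G$ and bound the number of extra elements needed to generate $G$. Since $M$ is a \emph{proper} normal subgroup, $d:=d(G/M)\ge 1$; fix a generating $d$-tuple $(\bar x_1,\dots,\bar x_d)$ of $G/M$. First dispose of the Frattini case: if $M\le\Phi(G)$ then any lift $(x_1,\dots,x_d)$ has $\langle x_1,\dots,x_d\rangle M=G$, hence $\langle x_1,\dots,x_d\rangle\Phi(G)=G$, hence $\langle x_1,\dots,x_d\rangle=G$, so $d(G)=d(G/M)$ and (as $d\ge 1$) both asserted bounds hold. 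So assume $M\not\le\Phi(G)$; then $M$ has a proper supplement, and I fix a subgroup $H\le G$ minimal subject to $HM=G$. By minimality $H\cap M\le\Phi(H)$, so $d(H)=d(H/(H\cap M))=d(G/M)=d$, and I replace $(\bar x_i)$ by a generating $d$-tuple of $H$; it now suffices to adjoin one element to $H$ (resp., for the refinement, to show no element need be adjoined).

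The structural engine is: for any $H$-invariant subgroup $N\le M$, the normal closure $\langle N^M\rangle$ is normalised by $M$ and by $H$, hence by $HM=G$, so by minimality of $M$ it equals $1$ or $M$. Taking $N=H\cap M$ gives the dichotomy: either $H\cap M=1$, so $H$ is a complement and $G=M\rtimes H$, or $\langle(H\cap M)^M\rangle=M$. For the bound $d(G)\le d+1$: if $M$ is abelian then $H\cap M\trianglelefteq M$ automatically, so the dichotomy forces $H\cap M=1$; the conjugation action of $H\cong G/M$ on $M$ then coincides with the $G$-module structure on $M$ (as $M\le C_G(M)$), which is irreducible and so cyclic, so some $v\in M$ has $\langle v^H\rangle=M$, whence $\langle H,v\rangle=G$ and $d(G)\le d(H)+1=d+1$. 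If $M$ is nonabelian, write $M=S_1\times\dots\times S_t$ with $S_i\cong S$ simple, on which $H$ acts transitively (forced by $HM=G$); using that $\langle H,m\rangle\cap M=\langle H\cap M,\,m^H\rangle$ one must find $m\in M$ with $\langle H\cap M,\,m^H\rangle=M$. When $H\cap M=1$ this needs the supplement $H$ chosen so that its action on the $S_i$ and of $N_H(S_1)$ on $S_1$ is rich enough that a single nontrivial $m\in S_1$ has $\langle m^H\rangle$ covering $S_1$, and hence, by transitivity, all of $M$; when $H\cap M\ne1$ (so $\langle(H\cap M)^M\rangle=M$) one instead applies induction on $|G|$ to the proper subgroup $H$ with a minimal normal subgroup of $H$ contained in $H\cap M$, absorbing the at most one extra generator. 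Either way $d(G)\le d+1$.

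For the refinement, suppose $M$ is the unique minimal normal subgroup. If $d\le 1$ the bound just proved gives $d(G)\le d+1\le 2=\max\{2,d\}$, so assume $d\ge 2$ and aim for $d(G)\le d$, i.e.\ that \emph{some} lift of $(\bar x_i)$ already generates $G$. This is a Gaschütz-type count: there are exactly $|M|^d$ lifts of $(\bar x_i)$, each generating a supplement to $M$, and one must show that the number of \emph{bad} lifts (generating a proper subgroup) is strictly less than $|M|^d$. A bad lift generates a proper supplement to $M$; when $M$ is abelian (and $M\not\le\Phi(G)$, so the extension splits) bad lifts correspond bijectively to complements of $M$, which form a torsor under $Z^1(G/M,M)$, so their number is $|Z^1(G/M,M)|\le|M|^d$, with equality excluded because $M$ is the unique minimal normal subgroup and $d\ge 2$ (equality would force $M$ to be central and split off as a direct factor, so that a minimal normal subgroup of the complement would be a second minimal normal subgroup of $G$). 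When $M$ is nonabelian one runs the analogous, more delicate count over the poset of proper supplements (a Möbius/crown argument), the uniqueness of $M$ being exactly what reduces the bookkeeping to the single chief factor $M$. In all cases a good lift exists once $d\ge 2$, so $d(G)\le d$.

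The main obstacle is the nonabelian minimal normal subgroup: in the bound $d(G)\le d+1$ it is the choice of a supplement $H$ realising $M$ as a ``cyclically generated'' $H$-operator group, together with the supplementary induction when $H\cap M\ne 1$; in the refinement it is the combinatorial count of bad lifts over the poset of supplements. These are precisely the points carried out in \cite{Luc2} and \cite{Luc3}; the abelian cases, the Frattini reduction, and the passage to a minimal supplement are routine.
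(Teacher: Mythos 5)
First, note that the paper does not prove this statement at all: it is quoted with attribution, and the intended ``proof'' is simply the citation of \cite{Luc2} and \cite{Luc3}. Your write-up should be judged as a blind proof, and as such it has a genuine gap: everything you actually carry out (the Frattini reduction, the passage to a minimal supplement $H$ with $H\cap M\le\Phi(H)$, the abelian case of $d(G)\le d(G/M)+1$ via irreducibility of $M$ as an $H$-module) is the routine part, while the nonabelian minimal normal subgroup --- which is the entire content of Lucchini's theorems --- is only gestured at and then explicitly deferred back to \cite{Luc2} and \cite{Luc3}. A sketch that ends by saying the hard steps ``are precisely the points carried out'' in the papers being cited is a reduction to the literature, not a proof.

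Moreover, the one concrete claim you do make in the nonabelian case is wrong as stated: it is not true that one can find a nontrivial $m\in S_{1}$ with $\langle m^{H}\rangle\supseteq S_{1}$. Since the $H$-conjugates of such an $m$ lying in $S_{1}$ are exactly its $N_{H}(S_{1})$-conjugates, one has $\langle m^{H}\rangle\cap S_{1}=\langle m^{N_{H}(S_{1})}\rangle$, and this can be far from $S_{1}$: for $G=(S_{1}\times\hdots\times S_{t})\rtimes C_{t}$ with $C_{t}$ permuting the simple factors regularly, $M=S_{1}\times\hdots\times S_{t}$ is minimal normal, $H=C_{t}$ is a complement, $N_{H}(S_{1})=1$, and $\langle m^{H}\rangle\cong\langle m\rangle^{t}$ is never $M$. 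One must instead find a general $m\in M$, with coordinates chosen in all the factors, such that $\langle (H\cap M),m^{H}\rangle=M$; proving that such an element always exists is exactly the nontrivial, CFSG-dependent step of \cite{Luc2} (and the analogous counting over supplements, which you label a ``M\"obius/crown argument'' without executing it, together with the equality analysis in your cocycle count, is the substance of \cite{Luc3}). So the proposal is correct in its easy parts but does not establish the theorem; within this paper the appropriate course is simply to cite the two references, as the author does.
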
   

We will now fix some notation which will be retained for the remainder of the section.\begin{itemize}
\item Let $R$ be a finite group (we do not exclude the case $R=1$).
\item Let $S$ be a transitive permutation group of degree $s\ge 2$.
\item Let $G$ be a large subgroup of the wreath product $R\wr S$ (see Definition \ref{LargeDef}).
\item Write $B:=R_{(1)}\times R_{(2)}\times\hdots\times R_{(s)}$ for the base group of $R\wr S$.
\item write $\pi:G\rightarrow S$ for the projection homomorphism onto the top group.
\item Let $H:=N_{G}(R_{(1)})=\pi^{-1}(\Stab_S(1))$.
\item Let $\Omega:=H\backslash G$.
\item Let $K:=G\cap B=\core_G(H)=\Ker_G(\Omega)$. \end{itemize}
Recall that for a subgroup $N$ of $R$, $B_N\cong N^s$ denotes the direct product of the distinct $S$-conjugates of $N$. In particular, if $N\unlhd R$, then $B_{N}\unlhd R\wr S$. Throughout, we will view $R$ as a subgroup of $B$ by identifying $R$ with $R_{(1)}$. We also note that 
\begin{itemize}
\item $|G:H|=s$; and
\item $S=G^{\Omega}$.\end{itemize}
In particular, the notation is consistent with the notation introduced at the beginning of Section \ref{MainModuleSection}.
 
\begin{Remark} The results in this section will be obtained by applying the results in Section \ref{ModuleChapter} with $H=H_1$ and $d=1$ (see the notation introduced at the beginning of Section \ref{MainModuleSection}). \end{Remark}
\begin{Remark}\label{NEWREM} If $R$ is a transitive permutation group, acting on a set $\Delta$, then $G$ is an imprimitive permutation group acting on the set $\Delta\times \{1,2,\hdots,s\}$, and $H=\Stab_G((\Delta,1))$. Furthermore $H^{\Delta}=R$, since $G$ is large (see Remark \ref{WreathRemark}).\end{Remark}

Our strategy for proving Theorem \ref{TransTheorem} can now be summarised as follows:\begin{description}
\item[Step 1:] Show that $K$ is ``built" from induced modules for $G$, and non-abelian $G$-chief factors.
\item[Step 2:] Derive bounds on $d(G)$ in terms of the factors from Step 1 and $d(S)$.
\item[Step 3:] Use Theorem \ref{MinTheorem}, together with the results from Section \ref{ModuleChapter}, to bound the contributions from the factors in Step 1 to the bound from Step 2.
\item[Step 4:] Use induction to bound $d(S)$.\end{description}

We begin with Step 1. 
\begin{Lemma}\label{prechief} Suppose that $R>1$ and that $1:=N_0\le N_1\le\hdots\le N_e=R$ is a normal series for $R$, where each factor is either elementary abelian, or a nonabelian chief factor of $R$. Consider the corresponding normal series $1:=G\cap B_{N_0}\le G\cap B_{N_1}\le\hdots\le G\cap B_{N_e}=G$ for $G$. Let $V_i:={N_{i}}/{N_{i-1}}$ and $M_i:=G\cap B_{N_i}/G\cap B_{N_{i-1}}$. \begin{enumerate}[(i)]
\item If $V_i$ is elementary abelian, then $M_i$ is a submodule of the induced module $V_i\uparrow^G_H$.
\item If $V_i$ is a nonabelian chief factor of $R$, then $M_i$ is either trivial, or a nonabelian chief factor of $G$.
\end{enumerate}\end{Lemma}
\begin{proof} Assume first that $V_i$ is elementary abelian, of order $p^a$ say. Then $B_{N_i}/B_{N_{i-1}}$ is a module for $G$ of dimension $as=a|G:H|$ over the finite field of order $p$. Furthermore, $B_{N_i}/B_{N_{i-1}}$ is generated, as a $G$-module, by the $H$-module $V_i$. It now follows from \cite[Corollary 3, Page 56]{Alp} that $B_{N_i}/B_{N_{i-1}}$ is isomorphic to the induced module $V_i\uparrow^G_H$. This proves (i).

Next, suppose that $V_i$ is a nonabelian chief factor of $R$. Write bars to denote reduction modulo $B_{N_{i-1}}$. Then $\ol{G}$ is a large subgroup of the wreath product $\ol{R}\wr S$, and ${\ol{N_i}}$ is a nonabelian minimal normal subgroup of $\ol{R}$. So we just need to prove that $\ol{G}\cap \ol{B_{N_i}}$ is either trivial or a nonabelian minimal normal subgroup of $\ol{G}$. To this end, consider the projection maps 
$$\ol{\rho_{j}}: N_{\ol{G}}(\ol{R_{(j)}})\rightarrow \ol{R_{(j)}}$$
defined in (2.1.1). Suppose that $M$ is a normal subgroup of $\ol{G}$ contained in $\ol{G}\cap \ol{B_{N_i}}$. Then $M\le N_{\ol{G}}(\ol{R_{(1)}})$, and hence $\ol{\rho_1}(M)$ is a normal subgroup of $\ol{\rho_1}(N_{\ol{G}}(\ol{R_{(1)}}))=\ol{R_{(1)}}$ contained in the minimal normal subgroup of $\ol{R_{(1)}}$ corresponding to $\ol{N_i}$. If $\ol{\rho_1}(M)=1$ then $\ol{\rho_j}(M)=1$ for all $j$, since $\pi(\ol{G})=S$ is transitive. Hence, in this case, we have $M=1$. Otherwise, $\ol{\rho_1}(M)\cong \ol{N_i}$, and $M$ is a subdirect product of $s$ copies of $\ol{N_i}$. In this case, since a minimal normal subgroup of a finite group is a direct product of simple groups, we must have $M=\ol{G}\cap \ol{B_{N_i}}$ by Proposition \ref{ISD}. Thus, if $\ol{G}\cap \ol{B_{N_i}}$ is non-trivial, then $\ol{G}\cap \ol{B_{N_i}}$ is a nonabelian minimal normal subgroup of $\ol{G}$, as required.\end{proof}

For the remainder of this section, suppose that $1:=N_0\le N_1\le\hdots\le N_e=R$ is a chief series for $R$, and let $V_i:={N_{i}}/{N_{i-1}}$ and $M_i:=G\cap B_{N_i}/G\cap B_{N_{i-1}}$. If $V_i$ is abelian we will also write $|V_i|=p_{i}^{a_{i}}$, for $p_i$ prime. 

We now have Step 2. 
\begin{Corollary}\label{chiefpreCor} We have
$$d(G)\le \sum_{V_i\text{ abelian}} d_G(M_i)+\nab{(R)}+d(S)$$
\end{Corollary}
\begin{proof} We will prove the corollary by induction on $|R|$. If $|R|=1$ then the bound is trivial, since $G\cong S$ in that case, so assume that $|R|>1$, and note that 
\begin{align}\label{ASTR} G/M_1\text{ is a large subgroup of }(R/V_1)\wr S.\end{align}
Suppose first that $V_1$ is abelian. Then $M_1$ is a $G$-module, so
\begin{align*} d(G)\le d_{G}(M_1)+ d(G/M_1).\end{align*}
Since $\nab{(R)}=\nab{(R/V_1)}$, (\ref{ASTR}) and the inductive hypothesis give the result.

So we may assume that $V_1$ is nonabelian. Then $M_1$ is either trivial or a minimal normal subgroup of $G$, by Lemma \ref{prechief} Part (ii). Hence, $d(G)\le d(G/M_1)+1$ by Theorem \ref{MinTheorem}. The result now follows, again from (\ref{ASTR}) and the inductive hypothesis.\end{proof}     

Before stating our next corollary, we refer the reader to Definition \ref{EDEF} for a reminder of the definitions of the functions $E$ and $E_{sol}$. The next two corollaries deal with Step 3.
\begin{Corollary}\label{chiefCor} Define $E'$ to be $E_{sol}$ if $S$ contains a soluble transitive subgroup, and $E':=E$ otherwise. Then\begin{enumerate}[(i)]
\item $d(G)\le \sum_{V_i\text{ abelian}} a_{i}E'(s,p_{i})+c_{nonab}{(R)}+d(S)$.
\item Suppose that $|R|=2$ and $s=2^{m}q$, where $q$ is odd, and that $S$ has a tuple of primitive components $X=(R_2,\hdots,R_t)$, where $\bl_{X,2}(S)\ge 1$. Let $\Gamma$ be a full set of blocks for $S$ of size $2^{\bl_{X,2}(S)}$, and set $\widetilde{S}:=S^\Gamma$. Then $$d(G)\le \sum_{i=0}^{\bl_{X,2}(S)} E'(2^{m-i}q,2)+d(\widetilde{S}).$$
\item Suppose that $|R|=2$ and $s=2^{m}3$, and that $S$ contains no soluble transitive subgroups. Then by Corollary \ref{MinTransTheoremCor3} there exists a Mersenne prime $p_1=2^{a}-1$ and a triple of integers $(e,t_1,t)$, with $e\ge 1$, and $t\ge t_{1}\ge 0$, such that\begin{enumerate}[(1)]
\item $m=ea+t$, and;
\item There exists a subgroup $N$ of $G$, such that $N^{\Omega}$ is soluble and has $2^{e+t_{1}}$ orbits, with $\binom{e}{k}2^{t_{1}}$ of them of length $3p_1^{k}\times 2^{t-t_{1}}$, for each $0\le k\le e$.\end{enumerate} Here, we have $$d(G)\le \sum_{k=0}^{e}2^{t-t_1}\binom{e}{k}E_{sol}(3p_1^{k}2^{t_1},2)+d(S).$$
\end{enumerate}\end{Corollary}
\begin{proof} By Corollary \ref{chiefpreCor}, we have
$$d(G)\le \sum_{V_i\text{ abelian}} d_G(M_i)+\nab{(R)}+d(S).$$
Now, by Corollary \ref{pmodlemmaMainCor}, $d_G(M_i)\le a_iE'(s,p_i)$. This proves (i). 

To prove (iii) first note that, by Corollary \ref{MinTransTheoremCor3}, and as mentioned in the statement of (iii), there exists a Mersenne prime $p_1:=2^{a}-1$, and a triple $(e,t_1,t)$, with $e\ge 1$, and $t\ge t_{1}\ge 0$, such that \begin{enumerate}[(i)]
\item $m=ea+t$, and;
\item There exists a subgroup $N$ of $G$, such that $N^{\Omega}$ is soluble and has $2^{e+t_{1}}$ orbits, with $\binom{e}{k}2^{t_{1}}$ of them of length $3p_1^{k}\times 2^{t-t_{1}}$, for each $0\le k\le e$.\end{enumerate} Note that, since $|R|=2$, the base group $K\le R^s$ of $G$ is soluble. Hence, since $N^{\Omega}\cong N/N\cap K$ is soluble, it follows that $N$ itself is also soluble. Corollary \ref{pmodlemmaMainCor1} Part (ii)(b) (with $ad=1$) then implies that
$$d_G(M_1)\le \sum_{k=0}^{e}2^{t_1}\binom{e}{k}E_{sol}(3p_1^{k}2^{t-t_1},2)$$
Since $|R|=2$, we have $d(G)\le d_G(M_1)+d(S)$, and the result follows.

Finally, we prove Part (ii). We will show that 
\begin{align}\label{MAD} d(S)\le\sum_{i=1}^{\bl_{X,2}(S)} E(2^{m-i}q,2)+d(\widetilde{S})\end{align}
by induction on $\bl_{X,2}(S)$. The result will then follow, since $d(G)\le E'(2^mq,2)+d(S)$ by Part (i). Now, by hypothesis, $S$ has a tuple of primitive components $X=(R_2,\hdots,R_t)$. Also, $|R_2|=2$ since $\bl_{X,2}(S)\ge 1$. Hence, by Theorem \ref{SupPerm}, $S$ is a large subgroup of a wreath product $R_2\wr S_2$, where either $S_2=1$, or $S_2$ is a transitive permutation group of degree $2^{m-1}q$, with a tuple $Y:=(R_3,\hdots,R_t)$ of primitive components. If $S_2=1$ then the result follows, since $s=4$ and $\widetilde{S}=1$ in that case. So assume that $S_2>1$. By Part (i), we have 
\begin{align}\label{Man} d(S)\le E'(2^{m-1}q,2)+d(S_2)\end{align}
If $\bl_{X,2}(S)=1$ then $S_2=\widetilde{S}$ and (\ref{MAD}) follows from (\ref{Man}). So assume that $\bl_{X,2}(S)>1$. Then $\bl_{Y,2}(S_2)=\bl_{X,2}(S)-1\ge 1$. The inductive hypothesis then yields $d(S_2)\le \sum_{i=1}^{\bl_{Y,2}(S_2)} E(2^{m-1-i}q,2)+d(\widetilde{S})= \sum_{i=2}^{\bl_{X,2}(S)} E(2^{m-i}q,2)+d(\widetilde{S})$. The bound (\ref{MAD}) now follows immediately from (\ref{Man}), which completes the proof.
\end{proof}

The next corollary will be key in our proof of Theorem \ref{TransTheorem} when $G$ is imprimitive with minimal block size $4$. 
\begin{Corollary}\label{S4NewBoundCor} Assume that $R=S_{4}$ or $R=A_{4}$. Define $E'$ to be $E_{sol}$ if $S$ contains a soluble transitive subgroup, and $E':=E$ otherwise. Then 
$$d(G)\le E'(s,2)+\min\left\{\frac{bs}{\sqrt{\log{s_2}}},\frac{s}{s_3}\right\} +E'(s,3)+d(S).$$
\end{Corollary}
\begin{proof} Let $\Delta:=\{1,2,3,4\}$, so that $R$ is transitive on $\Delta$. We have $V_1\cong 2^{2}$, $V_2\cong 3$, and $V_3\cong 2$ if $R\cong S_4$. Since $K^{\Delta}$ is a normal subgroup of $H^{\Delta}=R$ (see Remark \ref{NEWREM}), $K^{\Delta}$ is isomorphic to either $2^{2}$, $A_4$, or $S_4$. In the first two cases $M_{3}$ is trivial, so
$$d(G)\le d_G(M_1)+d_G(M_2)+d(S)\le 2E'(s,2)+E'(s,3)+d(S)$$
by Corollaries \ref{chiefpreCor} and \ref{pmodlemmaMainCor}. So assume that $K^{\Delta}\cong S_4$. Then a Sylow $3$-subgroup $P_3$ of $K^{\Delta}$ acts transitively on the non-identity elements of $V_1$. Thus, $\chi(P_{3}\cap K,V_1^\ast)=1$, so
$$d_G(M_1)\le \min\left\{\frac{bs}{\sqrt{\log{s_2}}},\frac{s}{s_3}\right\}$$
by Corollary \ref{pmodlemmaMainCor1} Parts (iii) and (iv), with $(p,q):=(2,3)$. The result follows.\end{proof}

\subsection{The proof of Theorem \ref{TransTheorem}}\label{TransProofSection}
In this section, we prove Theorem \ref{TransTheorem}. First, we deal with Step 4: the inductive step. As mentioned at the beginning of Section \ref{TransChapter}, the cases where $\bl_2(G)$ is large are the most difficult to deal with using our methods. In these cases, we have $d(G)\le E(s,2)+d(S)$ and usually the bounds on $d(S)$ which come from the inductive hypothesis then suffice to prove the theorem. However in some small cases the inductive hypothesis does not suffice, and we have to work harder. These cases, of which there are finitely many, are the subject of Appendix A, and include both the exceptional cases from Theorem \ref{TransTheorem} (Table A.2), and some additional cases which have a large $2$-part (Table A.1). The purpose of Lemma \ref{48} is to prove that the bounds in Appendix A hold.

Throughout this section, we retain the same notation as introduced immediately following Theorem \ref{MinTheorem}, with one additional assumption: that $R$ is a primitive permutation group of degree $r\ge 2$. Hence, $G$ is a transitive permutation group of degree $n:=rs$, and Remark \ref{NEWREM} applies. Also, set $E'$ to be $E_{sol}$ if $S$ contains a soluble transitive subgroup, and $E':=E$ otherwise. 

Recall also that $p_i^{a_i}$ denote the orders of the abelian chief factors of $R$, for $p_i$ prime.
\begin{Lemma}\label{48} Assume that Theorem \ref{TransTheorem} holds for degrees less than $n$. Then\begin{enumerate}[(i)] 
\item The bounds in Table A.1 (see Appendix A) hold, and;
\item If $n$ and $f$ are as in Table A.2, and either\begin{enumerate}[(a)]
\item $G$ contains a soluble transitive subgroup; or
\item $\bl_2{(G)}<f$,\end{enumerate}
then $d(G)\le \lfloor c_1n/\sqrt{\log{n}}\rfloor$, where $c_1=\frac{\sqrt{3}}{2}$.
\item If $n$ and $f$ are as in Table A.2, and\begin{enumerate}[(a)]
\item $G$ contains no soluble transitive subgroup; and
\item $\bl_2{(G)}\ge f$,\end{enumerate} then, the bounds in Table A.2 (Appendix A) hold.\end{enumerate}\end{Lemma}
\begin{proof} We first recall some bounds which will be used throughout the proof. We have
\begin{align}\label{derekbd} d(G) &\le s\lfloor\log{r}\rfloor+ d(S)\text{, if $r\ge 4$; and}\\
\label{nonderekbd}d(G) &\le \sum_{i}a_{i}E'(s,p_i)+c_{nonab}(R)+d(S). \end{align}
These bounds follow from Corollary \ref{derek} and Corollary \ref{chiefCor} Part (i) respectively.

To bound $d(S)$ above, we use the database of transitive groups of degree up to $32$ in MAGMA (\cite{CanHol}) if $2\le s\le 32$; otherwise, we use either the previous rows of Tables A.1 and A.2; or the bound $d(S)\le \lfloor c_1s/\sqrt{\log{s}}\rfloor$ (from the hypothesis of the lemma) if $s$ is not in Tables A.1 or A.2.

We will first prove (i) and (ii).\begin{description}
\item[(i) and (ii)] The values of $n$ occurring in Table A.1 are $n=2^{m}$ for $6\le m\le 11$; $n=2^{m+1}3$ for $3\le m\le 19$; $n=2^{m}5$ for $3\le m\le 16$; and $n=2^{m}15$ for $2\le m\le 14$. We distinguish a number of cases. Recall that $n=rs$. Throughout, we define $E'':=E_{sol}$ if $s$ is of the form $s=2^m$, and $E'':=E$ otherwise. (Note that a transitive group of prime power degree always contains a soluble transitive subgroup.)\begin{enumerate}
\item $r>16$. Then $d(G) \le s\lfloor\log{r}\rfloor+ d(S)$ by (\ref{derekbd}). Combining this with the bounds on $d(S)$ described above gives the required for each $n$ in Table A.1, and each possible pair $(r,s)$ with $r>16$ and $n=rs$, except when $(n,r,s)=(3145728,24,131072)$. However, each primitive group of degree $24$ is either simple, or has a simple normal subgroup of index $2$ (using the MAGMA \cite{MAGMA} database). Hence, in this case, (\ref{nonderekbd}), together with the hypothesis of the lemma, gives $d(G)\le E(s,2)+1+\lfloor c_1s/\sqrt{\log{s}}\rfloor=52895$. This gives us what we need. 

\item $r=2$. We distinguish two sub-cases.\begin{enumerate}[(a)]
\item $S$ contains a soluble transitive subgroup. Then $d(G)\le E_{sol}(s,2)+d(S)$ by (\ref{nonderekbd}), and this, together with the bounds on $d(S)$ described above gives the bounds in Table A.1 in each of the relevant cases.
\item $S$ contains no soluble transitive subgroups. Then $s$ is not of the form $s=2^m$. We distinguish each of the relevant cases.\begin{enumerate}[i]
\item $s=2^{m}3$, for some $3\le m\le 19$. By using the MAGMA database \cite{MAGMA}, we see that each transitive permutation group of degree $24$ contains a soluble transitive subgroup, so we must have $s=2^m3\ge 48$. In particular, $4\le m\le 19$.  By Corollary \ref{chiefCor} Part (iii) there exists a Mersenne prime $p_1=2^{a}-1$ and a triple of integers $(e,t_1,t)$, with $e\ge 1$, and $t\ge t_{1}\ge 0$, such that $m=ea+t$, and
\begin{align}\label{ARAB} d(G)\le \sum_{k=0}^{e}2^{t-{t_1}}\binom{e}{k}E_{sol}(3p_1^{k}2^{t_1},2)+d(S).\end{align} Since $4\le m\le 19$, the possibilities for $n$ and the triple $(a,e,t)$ are as follows:\\   
\begin{tabular}[t]{|c|p{10.8cm}|}
  \hline
  \multicolumn{2}{ |c| }{Table 5.1}\\ 
  \hline
  $n$ & $(a,e,t)$ \\
  \hline\hline
  $48$ & $(3,1,1)$ \\
  \hline
  $96$ & $(3,1,2)$, $(5,1,0)$ \\
  \hline
  $192$ & $(3,1,3)$, $(3,2,0)$, $(5,1,1)$ \\
  \hline
  $384$ & $(3,1,4)$, $(3,2,1)$, $(5,1,2)$, $(7,1,0)$ \\
  \hline
  $768$ & $(3,1,5)$, $(3,2,2)$, $(5,1,3)$, $(7,1,1)$ \\
  \hline
  $1536$ & $(3,1,6)$, $(3,2,3)$, $(3,3,0)$, $(5,1,4)$, $(7,1,2)$ \\
  \hline 
  $3072$ & $(3,1,7)$, $(3,2,4)$, $(3,3,1)$, $(5,1,5)$, $(7,1,3)$, $(5,2,0)$ \\
  \hline
  $6144$ & $(3,1,8)$, $(3,2,5)$, $(3,3,2)$, $(5,1,6)$, $(7,1,4)$, $(5,2,1)$ \\
  \hline 
  $12288$ & $(3,1,9)$, $(3,2,6)$, $(3,3,3)$, $(3,4,0)$, $(5,1,7)$, $(7,1,5)$, $(5,2,2)$ \\
  \hline
  \end{tabular}\\
  \begin{tabular}[t]{|c|p{4.1cm}|}
  \hline
  \multicolumn{2}{ |c| }{Table 5.1 ctd.}\\ 
  \hline
  $n$ & $(a,e,t)$ \\
  \hline\hline
  $24576$ & $(3,1,10)$, $(3,2,7)$, $(3,3,4)$, $(3,4,1)$, $(5,1,8)$, $(7,1,6)$, $(13,1,0)$, $(5,2,3)$ \\
  \hline
  $49152$ & $(3,1,11)$, $(3,2,8)$, $(3,3,5)$, $(3,4,2)$, $(5,1,9)$, $(7,1,7)$, $(13,1,1)$, $(5,2,4)$, $(7,2,0)$ \\
  \hline 
  $98304$ & $(3,1,12)$, $(3,2,9)$, $(3,3,6)$, $(3,4,3)$, $(3,5,0)$, $(5,1,10)$, $(7,1,8)$, $(13,1,2)$, $(5,2,5)$, $(7,2,1)$, $(5,3,0)$\\
  \hline
  $196608$ & $(3,1,13)$, $(3,2,10)$, $(3,3,7)$, $(3,4,4)$, $(3,5,1)$, $(5,1,11)$, $(7,1,9)$, $(13,1,3)$, $(5,2,6)$, $(7,2,2)$, $(5,3,1)$\\
  \hline
  \end{tabular}
\quad
  \begin{tabular}[t]{|c|p{4.1cm}|}
  \hline
  \multicolumn{2}{ |c| }{Table 5.1 ctd.}\\ 
  \hline
  $n$ & $(a,e,t)$ \\
  \hline\hline
  $393216$ & $(3,1,14)$, $(3,2,11)$, $(3,3,8)$, $(3,4,5)$, $(3,5,2)$, $(5,1,12)$, $(7,1,10)$, $(13,1,4)$, $(17,1,0)$, $(5,2,7)$, $(7,2,3)$, $(5,3,2)$ \\
  \hline
  $786432$ & $(3,1,15)$, $(3,2,12)$, $(3,3,9)$, $(3,4,6)$, $(3,5,3)$, $(3,6,0)$, $(5,1,13)$, $(7,1,11)$, $(13,1,5)$, $(17,1,1)$, $(5,2,8)$, $(7,2,4)$, $(5,3,3)$ \\
  \hline
  $1572864$ & $(3,1,16)$, $(3,2,13)$, $(3,3,10)$, $(3,4,7)$, $(3,5,4)$, $(3,6,1)$, $(5,1,14)$, $(7,1,12)$, $(13,1,6)$, $(17,1,2)$, $(19,1,0)$, $(5,2,9)$, $(7,2,5)$, $(5,3,4)$\\
  \hline
 \end{tabular}
 
 \vspace{5mm}
 
Going through each of the relevant values of $n$ in the first column of Table A.1, each triple $(a,e,t)$ in the last column of Table 5.1, and each possible value of $t_1\le t$, with $n/2=2^{ea+t}3$, the required bound follows from (\ref{ARAB}) each time.
\item $s=2^{m}5$, for some $2\le m\le 15$; or $s=2^m15$ for some $1\le m\le 14$. Then the bound $d(G)\le E(s,2)+d(S)$, together with the bounds on $d(S)$ described above, give the bounds in Table A.1 in each case. \end{enumerate}\end{enumerate}   

\item $r=3$. Here, $d(G)\le E''(s,3)+E''(s,2)+d(S)$, and the bounds from Table A.1 follow in each case from applying the usual upper bounds on $d(S)$.

\item $r=4$. Then
\begin{align}\label{S4bd}d(G)\le E''(s,2)+\min\left\{\frac{bs}{\sqrt{\log{s_2}}},\frac{s}{s_3}\right\}+E''(s,3)+d(S) \end{align}
by Corollary \ref{S4NewBoundCor}. Combining this with the bounds on $d(S)$ described above again gives the bound from the second column of Table A.1 for each of the values of $n$ in the first column, as required.

\item $r=5$. The possible lists of chief factors of the primitive group $R$ of degree $5$ can be obtained from the MAGMA database \cite{MAGMA}. In particular, applying (\ref{nonderekbd}) yields 
$$d(G)\le 2E''(s,2)+E''(s,5)+d(S).$$
Again, combining this with the bounds on $d(S)$ described above yields the required bound from Table A.1 in each case.

\item $r=6$. Again, we take the possible lists of chief factors of the primitive group $R$ of degree $6$ from the MAGMA database \cite{MAGMA}, and apply (\ref{nonderekbd}). We get 
$$d(G)\le E''(s,2)+1+d(S).$$
Combining this with the bounds on $d(S)$ described above yields the required bound from Table A.1 in each of the relevant cases.

\item $r=8$. After obtaining the possible chief factors of $R$ from the MAGMA database, we again apply (\ref{nonderekbd}) and get
$$d(G)\le 3E''(s,2)+E''(s,3)+E''(s,7)+d(S).$$
Using the above with the bounds on $d(S)$ described previously gives the required bound from Table A.1 in each case.

\item $10\le r\le 16$. In each case, we use the same approach as in the previous case, so to avoid being too repetitive we will just check the $r=16$ case. Again we can take the possible lists of chief factors of the primitive groups $R$ of degree $16$ from the MAGMA database, and apply (\ref{nonderekbd}). We get 
$$d(G)\le 7E''(s,2)+E''(s,3)+\max\{E''(s,3),E''(s,5)\}+d(S).$$
As before, combining this with the usual bounds for $d(S)$ gives the bounds in Table A.1 in each case.\end{enumerate}
\item[(iii)] We now consider the bounds in Table A.2., i.e. the exceptional cases from Theorem \ref{TransTheorem}. Thus, either $n=2^{m}5$ and $17\le m\le 26$, or $n=2^{m}15$ and $15\le m\le 35$. Note that $0\le \bl_{2}(G)\le m$. If $\bl_2(G)=0$ then (\ref{derekbd}) for $r>16$, and (\ref{nonderekbd}) for $2<r\le 16$, as in our proofs in {\bf (i) and (ii)} above yields the required bounds in each case. 

So assume that $\bl_2(G)\ge 1$. Then 
\begin{align}\label{fat} d(G)\le \sum_{i=1}^{\bl_{2}(G)} E(2^{m-i}5,2)+d(\widetilde{S})\end{align} 
where $\widetilde{S}$ is transitive of degree $2^{m-\bl_{2}(G)}v$, by Corollary \ref{chiefCor} Part (ii). 

Now, fix a transitive permutation group $G$ of degree $n$ where $n$ is one of the values from the first column of Table A.2. Suppose first that $\bl_2(G)\le f$, where $f$ is the corresponding value to $n$ in the second column of Table A.2. To bound $d(\widetilde{S})$ above, we use the database of transitive permutation groups of degree up to $32$ in MAGMA (see \cite{CanHol}) if $2\le 2^{m-\bl_{2}(G)}v\le 32$; otherwise, we use the previous rows of Tables A.1 and A.2. Combining these bounds for $d(\widetilde{S})$ with (\ref{fat}) yields $d(G)\le \lfloor c_1n/\sqrt{\log{n}}\rfloor$ in each case, as required. 

If $G$ contains a soluble transitive subgroup, then the bound at (\ref{fat}) with $E$ replaced by $E_{sol}$ holds, and yields $d(G)\le \lfloor c_1n/\sqrt{\log{n}}\rfloor$ in each case, as needed. 

So we may assume that $\bl_2(G)>f$, and that $G$ contains so soluble transitive subgroups. In particular, the bound at (\ref{fat}) again holds. If $\widetilde{S}$ is primitive of degree $2^{m-\bl_{2}(G)}v$, then the bound $d(\widetilde{S})\le\lfloor\log{(2^{m-\bl_{2}(G)}v)}\rfloor$ of Theorem \ref{derekthm} gives us the required bound in Table A.2 in each case. So assume that $\widetilde{S}$ is imprimitive, with minimal block size $\widetilde{r}>2$. Also, write $\widetilde{s}:=2^{m-f_{G}}v/\widetilde{r}$. With $(r,s)$ replaced by $(\widetilde{r},\widetilde{s})$, we can now apply (\ref{derekbd}) if $\widetilde{r}>16$, and (\ref{nonderekbd}) for $2<r\le 16$, as in cases (i) and (ii) above. (Note that $d(\widetilde{S})$ is bounded above using the database of transitive permutation groups of degree up to $32$ in MAGMA (see \cite{CanHol}) if $2\le \widetilde{s}\le 32$). This gives us the required bound in Table A.2 in each case. (We perform these calculations for each possible value of $f_{G}$, and each pair $(\widetilde{r},\widetilde{s})$ with $\widetilde{r}>2$ and $2^{m-f_{G}}v=\widetilde{r}\widetilde{s}$.) This completes the proof. \end{description}\end{proof}   

We are now ready to prove Theorem \ref{TransTheorem}.
\begin{proof}[Proof of Theorem \ref{TransTheorem}] The proof is by induction on $n$. Suppose first that $G$ is primitive. The result clearly holds when $n\le 3$. When $n\ge 4$, we have $\log{n}\le c_1n/\sqrt{\log{n}}$, so the result follows immediately from Theorem \ref{derekthm}. This can serve as the initial step.

The inductive step concerns imprimitive $G$. For this, we now use the notation introduced immediately following Theorem \ref{MinTheorem}. Write $V_i$ for the abelian chief factors of $R$, and write $|V_i|=p_i^{a_i}$. Recall that $a(R)$ denotes the composition length of $R$. In particular, $a(R)\ge \sum_{i} a_i+\nab{(R)}$. The inductive hypothesis, together with the bounds obtained in Corollaries \ref{pq} and \ref{derek}, give
\begin{align}\label{1260bd} d(G) &\le \left\lfloor \dfrac{2a(R)s}{c'\log{s}}\right\rfloor+\left\lfloor \dfrac{c_{1}s}{\sqrt{\log{s}}}\right\rfloor &\text{(if $2\le s\le 1260$)}\\
\label{1261bd} d(G) &\le \left\lfloor \dfrac{a(R)b\sqrt{2}s}{\sqrt{\log{s}}}\right\rfloor+\left\lfloor \dfrac{cs}{\sqrt{\log{s}}}\right\rfloor &\text{(if $s\ge 1261$)}\\
\label{shitbd}d(G) &\le \left\lfloor \dfrac{a(R)\frac{2}{c'}s}{\sqrt{\log{s}}}\right\rfloor +\left\lfloor \dfrac{cs}{\sqrt{\log{s}}}\right\rfloor &\text{(for all $s\ge 2$)}\\
\label{derekbd2}d(G) &\le s\lfloor \log{r}\rfloor +\left\lfloor \dfrac{cs}{\sqrt{\log{s}}}\right\rfloor &\text{(for $r\ge 4$, $s\ge 2$)} 
\end{align}respectively. Note that (\ref{1260bd}) and (\ref{1261bd}) follow from Corollaries \ref{pq} and \ref{chiefCor} Part (i), and together imply (\ref{shitbd}), while (\ref{derekbd2}) follows from Corollary \ref{derek}. Recall that we need to prove that $d(G)\le c_1rs/\sqrt{\log{rs}}$ for all cases apart from those listed in Theorem \ref{TransTheorem} Part (2).

Suppose first that $r\ge 481$. Then (\ref{shitbd}), together with Theorem \ref{pyber}, gives $$d(G)\le \frac{([(2+c_{0})\log{r}-(1/3)\log{24}]\frac{2}{c'}+c)s}{\sqrt{\log{s}}}.$$ This is less than $c_{1}rs/\sqrt{\log{rs}}$ for $r\ge 481$ and $s\ge 2$, which gives us what we need. 

So we may assume that $2\le r\le 480$. Suppose first that $10\le r\le 480$, and consider the function 
$$f(e,z,w)=\frac{(eb\sqrt{2}+c)\sqrt{z+w}}{2^{z}\sqrt{w}}$$ defined on triples of positive real numbers. Clearly when the pair $(e,z)$ is fixed, $f$ becomes a decreasing function of $w$. We distinguish two sub-cases:\begin{enumerate}[(a)] 
\item $s\ge 1261$. For each of the cases $10\le r \le 480$, we compute the maximum value $\as(r)$ of the composition lengths of the primitive groups of degree $r$, using MAGMA. Each time, we get $f(\as(r),\log{r},\log{s})\le f(\as(r),\log{r},\log{1261})$\\$<c_{1}$, and the result then follows, in each case, from (\ref{1261bd}).
\item $2\le s\le 1260$. For each fixed $r$, $10\le r\le 480$, and each $s$, $2\le s\le 1260$, we explicitly compute $\min{\left\{{\lfloor 2\as(r)s/(c'\log{s})\rfloor,s\lfloor\log{r}\rfloor}\right\}} +\lfloor c_{1}s/\sqrt{\log{s}}\rfloor$. Each time, except when $r=16$ and $72\le s\le 1260$, this integer is less than or equal to $\lfloor c_{1}rs/\sqrt{\log{rs}}\rfloor$, which, after appealing to the inequalities at (\ref{1260bd}) and (\ref{derekbd2}), gives us what we need. If $r=16$, and $72\le s\le 1260$, we have $d(G)\le 7E(s,2)+2E(s,3)+\lfloor c_{1}s/\sqrt{\log{s}}\rfloor$, by Corollary \ref{chiefCor} Part (i), and this gives the required bound in each case (the chief factors of the primitive groups of degree $16$ are computed using MAGMA - see Table B.2).\end{enumerate}

Finally, we deal with the cases $2\le r\le 9$. In considering each of the relevant cases, we take the possible lists of chief factors of $R$ from the MAGMA database. In each case, we bound $d(S)$ above by using the database of transitive permutation groups of degree up to $32$ in MAGMA (see \cite{CanHol}) if $2\le s\le 32$, Lemma \ref{48} if $s$ is in the left hand column of Table A.1 or Table A.2, or the inductive hypothesis otherwise.\begin{enumerate}[(a)]
\item $r=2$. Corollary \ref{chiefCor} Part (i) gives $d(G)\le E(s,2)+d(S)$. Write $s=2^{m}q$, where $q$ is odd, and assume first that $s<10^{66}$. Assume first that $\lpp(q)\ge 19$. Then $d(G)\le s/19+d(S)$, and the bounds on $d(S)$ described above, yield $d(G)\le 2c_{1}s/\sqrt{\log{2s}}$ for $s<10^{66}$. So assume further that $\lpp(q)\le 17$. Then $q$ is of the form $q=3^{l_{3}}5^{l_{5}}7^{l_{7}}11^{l_{11}}13^{l_{13}}17^{l_{17}}$, where $0\le l_{3}\le 2$, and $0\le l_{i}\le 1$, for $i=5$, $7$, $11$, $13$  and $17$. Fix one such $q$. Then $0\le m\le m(q):=\lfloor\log{(10^{66}/q)}\rfloor$, and $d(G)\le E(2^{m}q,2)+d(S)$. Now, by using the upper bounds on $d(S)$ described above, we get $d(G)\le 2c_{1}s/\sqrt{\log{2s}}$, for each of the $96$ possible values of $q$, and each $0\le m\le m(q)$. This gives us what we need.

Thus, we may assume that $s\ge 10^{66}$. We distinguish two sub-cases.\begin{enumerate}[(i)]
\item $s_{2}\ge s^{858/1000}$. Then $E(s,2)\le bs/\sqrt{\log{s_{2}}}\le bs\sqrt{1000/858}/\sqrt{\log{s}}$. Hence, $d(G)\le bs\sqrt{1000/858}/\sqrt{\log{s}}+c_{1}s/\sqrt{\log{s}}$, and this is less than or equal to $2c_{1}s/\sqrt{\log{2s}}$ for $s\ge 10^{66}$, as required. 
\item $s/s_{2}\ge s^{142/1000}$. Then, by Lemma \ref{primecount}, we have 
$$E(s,2)\le s/(c'\log{(s/s_{2})})\le (1000/142)s/c'\log{s},$$ and hence $d(G)\le (1000/142)s/(c'\log{s})+c_{1}s/\sqrt{\log{s}}$. Again, this is less than or equal to $2c_{1}s/\sqrt{\log{2s}}$, for $s\ge 10^{66}$.\end{enumerate}
\item $r=3$. Here, Corollary \ref{chiefCor} Part (i)  gives $d(G)\le E(s,3)+E(s,2)+d(S)$. Using the bounds for $d(S)$ described above, this gives us what we need whenever $2\le s\le 5577$, and whenever $S$ is one of the exceptional cases listed in Theorem \ref{TransTheorem} Part (2) )in these cases, we take the bounds for $d(S)$ from Table A.2). Otherwise, $s\ge 5578$, and we use Corollary \ref{pq} to distinguish two cases, with $\alpha=1/3$.\begin{enumerate}[(i)]
\item $s_{2}$, $s_{3}\le s^{1/3}$. Then $d(G)\le 3s/(c'\log{s})+c_{1}s/\sqrt{\log{s}}$, and this is less than or equal to $3c_{1}s/\sqrt{\log{3s}}$ for $s\ge 3824$.
\item $s_{2}\ge s^{1/3}$, or $s_{3}\ge s^{1/3}$. Then $\lpp{(s/s_3)}\ge s^{1/3}$ or $\lpp{(s/s_2)}\ge s^{1/3}$, so $d(G)\le b\sqrt{3}s/\sqrt{\log{s}}+{s}^{2/3}+c_{1}s/\sqrt{\log{s}}$, and this is at most $3c_{1}s/\sqrt{\log{3s}}$, for $s\ge 5578$.\end{enumerate}  
\item $r=4$. Here Corollary \ref{S4NewBoundCor} implies that 
\begin{align}\label{aye} d(G)\le E(s,2)+\min\left\{\frac{bs}{\sqrt{\log{s_2}}},\frac{s}{s_3}\right\} +E(s,3)+d(S).\end{align}
Using the bounds on $d(S)$ described above, this yields the required upper bound whenever $S$ is one of the exceptional cases of Theorem \ref{TransTheorem} Part (2), and whenever $7\le s\le 49435925$. When $2\le s\le 6$, $G$ is transitive of degree $4s$, and the result follows by using Table B.1. So assume that $s\ge 115063$, and that $s$ is not one of those cases listed in Theorem \ref{TransTheorem} Part (2). We distinguish three cases.\begin{enumerate}[(i)]
\item $s_{2}$, $s_{3}\le s^{21/50}$. Then $d(G)\le (200/29)s/(c'\log{s})+c_1s/\sqrt{\log{s}}$ by Corollary \ref{pq} (with $alpha=21/50$), and this is less than or equal to $4c_{1}s/\sqrt{\log{4s}}$ for $s\ge 49435925$, as needed.
\item $s_{2}\ge s^{21/50}$. Then $E(s,2)\le\sqrt{50/21}bs/\sqrt{\log{s}}$, and $E(s,3)\le s/\lpp{(s/s_3)}\le s/s_2\le s^{29/50}$. Hence, $d(G)\le 2\sqrt{50/21}bs/\sqrt{\log{s}}+s^{29/50}+c_{1}s/\sqrt{\log{s}}$ by (\ref{aye}). This is at most $4c_{1}s/\sqrt{\log{4s}}$, for $s\ge 28090868$.
\item $s_{3}\ge s^{21/50}$. Then $d(G)\le \sqrt{50/21}bs/\sqrt{\log{s}}+2s^{29/50}+c_{1}s/\sqrt{\log{s}}$ using a similar argument to (ii) above. This is less than or equal to $4c_{1}s/\sqrt{\log{4s}}$, for $s\ge 56$. This completes the proof of the theorem in the case $r=4$.\end{enumerate}
\item $r=5$. Corollary \ref{chiefCor} Part (i) gives $d(G)\le E(s,5)+2E(s,2)+d(S)$. Again, this gives us what we need for each $s$ in the range $3\le s\le 552$, and each exceptional $S$. Also, $s=2$ implies that $G$ is transitive of degree $10$, and the result follows from Table B.1. Thus, we may assume that $s\ge 553$. Applying Corollary \ref{pq}, with $\alpha=2/5$, yields three cases.\begin{enumerate}[(i)]
\item $s_{2}$, $s_{5}\le s^{2/5}$. Then $d(G)\le 5s/(c'\log{s})+c_{1}s/\sqrt{\log{s}}$, which is less than or equal to $5c_{1}s/\sqrt{\log{5s}}$ for $s\ge 553$, as required.
\item $s_{2}\ge s^{2/5}$. Then $d(G)\le 2b\sqrt{5/2}s/\sqrt{\log{s}}+{s}^{3/5}+c_{1}s/\sqrt{\log{s}}$, and this is no greater than $5c_{1}s/\sqrt{\log{5s}}$ when $s\ge 139$.
\item $s_{5}\ge s^{2/5}$. Then $d(G)\le b\sqrt{5/2}s/\sqrt{\log{s}}+2{s}^{3/5}+c_{1}s/\sqrt{\log{s}}$, which is less than or equal to $5c_{1}s/\sqrt{\log{5s}}$ for $s\ge 17$.\end{enumerate}
\item $r=6$. Here, Corollary \ref{chiefCor} Part (i), together with the inductive hypothesis, gives $d(G)\le E(s,2)+1+d(S)$. Using the usual bounds on $d(S)$, this is at most $\lfloor 6cs/\sqrt{\log{6s}}\rfloor$ for $2\le s\le 1260$, and whenever $S$ is one of the exceptional cases. Otherwise, $s\ge 1261$, and $d(S)\le c_{1}s/\sqrt{\log{s}}$. Hence, by Corollary \ref{pq} Part (iii), $d(G)\le b\sqrt{2}s/\sqrt{\log{s}}+1+c_1s/\sqrt{\log{s}}$, which is less than or equal to $6c_{1}s/\sqrt{\log{6s}}$ for $s\ge 2$. This completes the proof of the theorem in the case $r=6$.
\item $r=7$. Here, $d(G)\le E(s,2)+E(s,3)+E(s,7)+d(S)$, again using Corollary \ref{chiefCor} Part (i). Bounding $d(S)$ as described previously, this is at most $\lfloor 7c_{1}s/\sqrt{\log{7s}}\rfloor$ for each $s$ in the range $2\le s\le 1260$, and each exceptional $S$. Otherwise, $s\ge 1261$, and by Corollary \ref{pq} Part (iii) $d(G)\le 3b\sqrt{2}s/\sqrt{\log{s}}+ c_{1}s/\sqrt{\log{s}}$. This is less than $7c_{1}s/\sqrt{\log{7s}}$ for $s\ge 7$, and, again, we have what we need.
\item $r=8$. Using Corollary \ref{chiefCor} Part (i), $d(G)\le 3E(s,2)+E(s,3)+E(s,7)+d(S)$. In each of the cases $2\le s\le 272$, and each exceptional case, this bound, together with the bounds on $d(S)$ described above, give us what we need. Thus, we may assume that $s\ge 273$. Then the inductive hypothesis gives $d(S)\le c_{1}s/\sqrt{\log{s}}$, and applying Corollary \ref{pq}, with $\alpha=37/100$, yields three cases.\begin{enumerate}[(i)]
\item $\max\left\{s_{2},s_{3},s_{7}\right\}\le s^{37/100}$. Then $d(G)\le (500/63)s/(c'\log{s})+c_{1}s/\sqrt{\log{s}}$, which is less than or equal to $8c_{1}s/\sqrt{\log{8s}}$ for $s\ge 273$, as required.
\item $s_{2}\ge s^{37/100}$. Then $d(G)\le 3b\sqrt{100/37}s/\sqrt{\log{s}}+2{s}^{63/100}+c_{1}s/\sqrt{\log{s}}$, and this is no greater than $8c_{1}s/\sqrt{\log{8s}}$ when $s\ge 98$.
\item $\max\left\{s_{3},s_{7}\right\}\ge s^{37/100}$. Then $d(G)\le 2b\sqrt{100/37}s/\sqrt{\log{s}}+3{s}^{63/100}+c_{1}s/\sqrt{\log{s}}$, which is less than or equal to $8c_{1}s/\sqrt{\log{8s}}$ for $s\ge 27$.\end{enumerate}  
\item $r=9$. By Corollary \ref{chiefCor} Part (i), $d(G)\le 4E(s,2)+3E(s,3)+d(S)$. When $3\le s\le 2335$, and when $S$ is one of the exceptional cases, this bound, together with the usual bounds on $d(S)$, give us what we need. If $s=2$, then $G$ is transitive of degree $18$, and the result follows from Table A.1. Otherwise, $s\ge 2336$, and $d(S)\le c_{1}s/\sqrt{\log{s}}$, using the inductive hypothesis. We now use Corollary \ref{pq} to distinguish three cases, with $\alpha=37/100$.\begin{enumerate}[(i)]  
\item $s_{2}$, $s_{3}\le {s}^{37/100}$. Then $d(G)\le (700/63)s/(c'\log{s})+c_{1}s/\sqrt{\log{s}}$, and this is less than or equal to $9c_{1}s/\sqrt{\log{9s}}$ for $s\ge 2336$, as needed.
\item $s_{2}\ge {s}^{37/100}$. Then $d(G)\le 4b\sqrt{100/37}s/\sqrt{\log{s}}+3{s}^{63/100}+c_{1}s/\sqrt{\log{s}}$, which is no larger than $9cs/\sqrt{\log{9s}}$, whenever $s\ge 1197$.
\item $s_{3}\ge {s}^{37/100}$. Here, $d(G)\le 3b\sqrt{100/37}s/\sqrt{\log{s}}+4{s}^{63/100}+c_{1}s/\sqrt{\log{s}}$, and this is less than or equal to $9c_{1}s/\sqrt{\log{9s}}$ for $s\ge 148$.\end{enumerate}\end{enumerate}
This completes the proof of Theorem \ref{TransTheorem}.\end{proof}

\section{The proof of Theorem \ref{TransOrderTheorem}}\label{TransOrderChapter}
In proving Theorem \ref{TransOrderTheorem}, we will omit reference to the constant $C$, and just use the Vinogradov notation defined immediately after Definition \ref{HypA}. We will now restate some results from Sections 2, 3 and 4 in this language for the convenience of the reader. 

We begin with Theorems \ref{pyber} and \ref{TransTheorempre}.
\begin{Theorem}\label{pyber1} Let $R$ be a primitive permutation group of degree $r$. Then $a{(R)}\ll \log{r}$.\end{Theorem}

\begin{Theorem}\label{TransTheorem1} Let $S$ be a transitive permutation group of degree $s\ge 2$. Then $d(S)\ll  s/\sqrt{\log{s}}$.\end{Theorem}

We also note the following useful consequence of Corollaries \ref{chiefpreCor} and \ref{pq}, and Theorem \ref{TransTheorem1}. 
\begin{Corollary}\label{UsefulCor} Let $R$ be a finite group, let $S$ be a transitive permutation group of degree $s\ge 2$, and let $G$ be a large subgroup of the wreath product $R\wr S$. Then
$$d(G)\ll \frac{a(R)s}{\sqrt{\log{s}}}.$$\end{Corollary}

Theorem \ref{derekthm} reads as follows in Vinogradov notation.
\begin{Theorem}[{\bf\cite{derek}, Theorem 1.1}]\label{derekthm1} Let $H$ be a subnormal subgroup of a primitive permutation group of degree $r$. Then $d(H)\ll \log{r}$.\end{Theorem}

Finally, we will need the following theorem of Cameron, Solomon and Turull; note that we only give a simplified version of their result here. 
\begin{Theorem}[{\bf\cite{Cam}, Theorem 1}]\label{Cam} Let $G$ be a permutation group of degree $n\ge 2$. Then $a{(G)}\ll n$.\end{Theorem}

\subsection{Orders of transitive permutation groups}
We now turn to bounds on the order of a transitive permutation group $G$, of degree $n$. First, we fix some notation which will be retained for the remainder of the section. Let $G$ be a transitive permutation group of degree $n$, and let $(R_{1},\hdots,R_t)$ be a tuple of primitive components for $G$, where each $R_i$ is primitive of degree $r_i$, and $\prod_i r_i=n$. Furthermore, we will write $\pi_{1}$ for the identity map $G\rightarrow G$, and for $i\ge 2$, we will write $\pi_{i}$ to denote the projection $\pi_{i}:G\pi_{i-1}\le R_{i-1}\wr (R_{i}\wr R_{i+1}\wr\hdots\wr R_{t})\rightarrow R_{i}\wr R_{i+1}\wr \hdots\wr R_{t}$.

The following is a simplified version of a theorem of C. Praeger and J. Saxl \cite{PraeSax} (which was later improved by A. Mar\'{o}ti in \cite{AMaroti}).
\begin{Theorem}[{\bf\cite{PraeSax}, Main Theorem}]\label{Maroti} Let $G$ be a primitive permutation group of degree $r$, not containing $\Alt(r)$. Then $\log{|G|}\ll r$.\end{Theorem}

Since the symmetric and alternating groups are $2$-generated, the next corollary follows immediately from Theorems \ref{derekthm1} and \ref{Maroti}.
\begin{Corollary}\label{PrimOrderBound} Let $G$ be a subnormal subgroup of a primitive permutation group of degree $r$. Then $d(G)\log{|G|}\ll r\log{r}$.\end{Corollary}

\subsection{The proof of Theorem \ref{TransOrderTheorem}}
Before proceeding to the proof of Theorem \ref{TransOrderTheorem}, we require an application of the results in Section \ref{WreathAppSection}. First, we need a preliminary lemma.
\begin{Lemma}\label{Chiefit}Let $R$ and $S$ be transitive permutation groups of degree $r\ge 2$ and $s\ge 1$ respectively, let $D$ be a subgroup of $\Sym(d)$ containing $\Alt(d)$, let $P$ be a large subgroup of the wreath product $D\wr S$, and let $G$ be a large subgroup of $R\wr P$. Also, write $U_{i}$ for the abelian chief factors of $R$. Suppose that $d\ge 5$. Then\begin{enumerate}[(i)]
\item There exists a large subgroup $Q$ of the wreath product $R\wr D$, and an embedding $\theta:G\rightarrow Q\wr S$, such that $G\theta$ is a large subgroup of $Q\wr S$.
\item Let $H:=N_{Q}(R_{(1)})$. Then $Q$ has a normal series
$$1=N_{0}\le N_{1}\le \hdots<N_{t}<N_{t+1}\le N_{t+2}=Q,$$ where for each abelian $U_i$ with $i\le t$, $N_{i}/N_{i-1}$ is contained in the $Q$-module $U_{i}\uparrow^Q_H$; and for each non-abelian $U_i$ with $i\le t$, $N_i/N_{i-1}$ is either trivial or a non-abelian chief factor of $Q$. Also, $N_{t+1}/N_{t}\cong \Alt(d)$, and $|N_{t+2}/N_{t+1}|\le 2$. \end{enumerate}
\end{Lemma}
\begin{proof} Note first that $G$ is an imprimitive permutation group of degree $rds$, with a block $\Delta_1$ of size $r$, by Remark \ref{WreathBlockRemark}. Now, by Remark \ref{WreathRemark}, $G$ is also a subgroup of the wreath product $X:=(R\wr D)\wr S$. Hence, $G$ also has a block of size $rd$, again using Remark \ref{WreathBlockRemark}. Let $\Delta$ be a block of size $rd$ containing $\Delta_1$. Let $H_1:=\Stab_G(\Delta_1)$ and $H:=\Stab_G(\Delta)=N_Q(R_{(1)})$. Then $H_1\le H$, and $\Delta_1$ is a block for $H^{\Delta}$ of size $r$, with block stabiliser $H_{1}^{\Delta}$. Let $\Gamma_1$ be the set of $H$-translates of $\Delta_1$, and let $\Gamma$ be the set of $G$-translates of $\Delta$. Then $G$ is a large subgroup of $H^\Delta\wr G^\Gamma$, while $H^{\Delta}$ is a large subgroup of $H_1^{\Delta_1}\wr H^{\Gamma_1}$, by Theorem \ref{SupPerm}. By Definition \ref{LargeDef}, $H_1^{\Delta_1}\cong R$. Thus, to complete the proof of Part (i) we just need to show that $H^{\Gamma_1}\cong D$ and $G^{\Gamma}\cong S$ (we then take $Q=H^{\Delta}$).

First, let $\pi:G\le R\wr P\rightarrow P$ denote projection over the top group. Note that $H\pi\le P$ is a permutation group of degree $ds$, stabilising a block of size $d$. Furthermore, since $\Ker(\pi)=\core_G(H_1)\le H_1\le H$, we have $s=|G:H|=|G\pi:H\pi|$. Thus, $H\pi$ is the full (set-wise) stabiliser of a block for $P$ of size $d$. It follows that $H^{\Gamma_1}\cong D$, since $P$ is large in $D\wr S$. 

Since $\Ker(\pi)=\Ker_G(\Delta_1^G)\le \Ker_G(\Gamma)$, we have $G^{\Gamma}\cong \pi(G)^{\Gamma}=P^{\Gamma}=S$, as needed. Finally, since $Q$ is a large subgroup of $R\wr D$, and $D\cong \Alt(d)$ or $D\cong\Sym(d)$, Part (ii) follows from Lemma \ref{prechief}.
\end{proof}      

The mentioned application can now be given as follows.
\begin{Proposition}\label{WreathApp}Let $R$ be a finite group, let $S$ be a transitive permutation group of degree $s\ge 2$, let $D$ be a subgroup of $\Sym(d)$ containing $\Alt(d)$, let $P$ be a large subgroup of the wreath product $D\wr S$, and let $G$ be a large subgroup of $R\wr P$. Also, let $K_1$ be the kernel of the action of $P\le D\wr S$ on a set of blocks of size $d$, and let $A$ be the induced action of $K_1$ on a fixed block $\Delta$ for $P$. Assume that $A\neq 1$, that $d\ge 5$, and set $g(d,s):=\max\{1,\frac{d}{\sqrt{\log{s}}}\}$. Then\begin{enumerate}[(i)]
\item $d(G)\ll a(R)s$; and
\item $d(G)\ll\frac{a(R)g(d,s)s}{\sqrt{\log{s}}}$.\end{enumerate}
\end{Proposition}
\begin{proof} Let $U_{i}$, for $1\le i\le t$ say, denote the chief factors of $R$. Also, if $U_{i}$ is abelian, write $|U_{i}|=p_{i}^{a_{i}}$, for $p_{i}$ prime. By Lemma \ref{Chiefit} Part (i), $G$ is a large subgroup of $Q\wr S$, where $Q$ is a large subgroup of $R\wr D$. Let $H_1:=N_Q(R_{(1)})$. By Lemma \ref{Chiefit} Part (ii), $Q$ has a normal series
$$1=N_{0}\le N_{1}\le \hdots\le N_{t}<N_{t+1}\le N_{t+2}=Q,$$ where each abelian factor $N_{i}/N_{i-1}$, for $i\le t$, is contained in the $Q$-module $U_{i}\uparrow^Q_{H_1}$, and each nonabelian factor is a chief factor of $Q$. Also, $N_{t+1}/N_{t}\cong \Alt(d)$, and $|N_{t+2}/N_{t+1}|\le 2$. In particular, \begin{align}\label{First} \nab{(Q)}\le \nab{(R)}+1.\end{align}
 Denote by $B$ the base group of $Q\wr S$, and consider the corresponding normal series
\begin{align}\label{fo} 1 &=G\cap B_{N_{0}}\le G\cap B_{N_{1}}\le G\cap B_{N_{2}} \le \hdots\le G\cap B_{N_{t}}\\
 & < G\cap B_{N_{t+1}}\le G\cap B_{N_{t+2}}=G\cap B\end{align}
for $G\cap B$. Let $M_{i}$ be the abelian factors in (\ref{fo}). Then
\begin{align}\label{Second}
d(G)\ll \sum_{U_{i}\text{ abelian}} d_{G}(M_{i})+\nab{(R)}+\frac{s}{\sqrt{\log{s}}}\end{align}
by Corollary \ref{chiefpreCor} and Theorem \ref{TransTheorem1}. Viewing $G$ as a subgroup of $Q\wr S$, let $H:=N_{G}(Q_{(1)})$. Also, let $\pi:R\wr P\rightarrow P$ denote projection over the top group. Since $H\pi\le P$ stabilises a block of size $d$, we may assume, without loss of generality, that 
$$H\pi={\Stab}_P(\Delta)$$
(recall that $\Delta$ is a block of size $d$ for $P\le D\wr S$). Note also that $M_i$ is a submodule of the induced module $U_i\uparrow^H_{H_1}\uparrow^G_H\cong U_i\uparrow^G_{H_1}$, by Lemmas \ref{prechief} and \ref{First}. 

Fix $i$ in the range $1\le i\le t$ such that $U_i$ is abelian. Suppose first that $s_{p_{i}}\le \sqrt{s}$. Then Corollary \ref{pq} Part (ii), with $\alpha:=1/2$, gives
\begin{align}\label{Third} d_{G}(M_{i})\ll \frac{a_{i}ds}{\log{s}}\le\frac{a_{i}g(d,s)s}{\sqrt{\log{s}}}\end{align}
Assume next that $s_{p_{i}}>\sqrt{s}$ for some fixed $i$. Let $K:=\core_{G}(H)$. Note that $K\pi=K_1\le P$, since $H\pi=\Stab_P(\Delta)$ is a block stabiliser. Then $$1<A=(K\pi)^\Delta\unlhd (H\pi)^\Delta=D,$$ so $(K\pi)^{\Delta}\ge \Alt(d)$. Hence, Proposition \ref{MainModuleTheorem} Part (ii) implies that
\begin{align}\label{Fourth} d_{G}(M_{i})\ll \frac{a_{i}s}{\sqrt{\log{s_{p_{i}}}}}\le \frac{\sqrt{2}a_{i}s}{\sqrt{\log{s}}}\ll\frac{a_{i}g(d,s)s}{\sqrt{\log{s}}}.\end{align}

Thus, (\ref{Second}), (\ref{Third}) and (\ref{Fourth}) yield:\begin{align*}
d(G) &\ll \sum_{U_{i}\text{ abelian}} \frac{a_{i}g(d,s)s}{\sqrt{\log{s}}}+\nab{(R)}+\frac{s}{\sqrt{\log{s}}}\\
&\ll \frac{a(R)g(d,s)s}{\sqrt{\log{s}}}+\frac{s}{\sqrt{\log{s}}}\\
&\ll \frac{a(R)g(d,s)s}{\sqrt{\log{s}}}+\frac{g(d,s)s}{\sqrt{\log{s}}}\ll \frac{a(R)g(d,s)s}{\sqrt{\log{s}}}\end{align*}
and this proves Part (ii).

Finally, \ref{Second} and Proposition \ref{MainModuleTheorem} Part (i) give 
\begin{align*}
d(G) &\ll \sum_{U_{i}\text{ abelian}} a_{i}s+\nab{(R)}+\frac{s}{\sqrt{\log{s}}}\\
&\ll {a(R)s}+\frac{s}{\sqrt{\log{s}}}\ll a(R)s\end{align*}
and this completes the proof.\end{proof}

We are now ready to prove Theorem \ref{TransOrderTheorem}.       
\begin{proof}[Proof of Theorem \ref{TransOrderTheorem}] Let $f(G)=d(G)\log{|G|}\sqrt{\log{n}}/n^{2}$. We will prove, by induction on $n$, that $f(G)\ll 1$. If $G$ is primitive, then $f(G)\ll (\log{n})^{3/2}/n$ by Corollary \ref{PrimOrderBound}, and the claim follows.

For the inductive step, assume that $G$ is imprimitive. Fix a tuple \\$(R_{1}, R_{2},\hdots,R_{t})$ of primitive components for $G$, where each $R_{i}$ is primitive of degree $r_{i}$, say. Also, for $1\le i\le t-1$, let $\Delta_{i}$ be a block of size $r_{i}$ for $\pi_{i}(G)\le$ $R_{i}\wr \pi_{i+1}{(R_{i})}$, and denote by $A_{i}$ the induced action of $\Ker_{\pi_{i}(G)}( \{{\Delta_i}^{g}\text{ : }g\in \pi_{i}(G)\})$ on $\Delta_i$ (in particular, note that $A_{i}\unlhd R_{i}$). Finally, set $A_{t}:=\pi_{t}(G)$. Then
\begin{align}\label{OrderBound}|G|\le \prod_{i=1}^{t}|A_{i}|^{\frac{n}{r_{1}\hdots r_{i}}}\end{align}

Next, for $1\le i\le t$, we define the functions $f_{i}$ as follows
\begin{align}\label{MainBoundfi} f_{i}(G):=\frac{d(G)n\log{|A_{i}|}\sqrt{\log{n}}}{r_{1}r_{2}\hdots r_{i}n^{2}}=\frac{d(G)\log{|A_{i}|}\sqrt{\log{n}}}{r_{1}r_{2}\hdots r_{i}n}\end{align}
The inequality at \ref{OrderBound} then yields $f(G)\le \sum_{i=1}^{t} f_{i}(G)$. We claim that $f_{i}(G)\ll\frac{(i-1)}{2^{i-1}}$ for $2\le i\le t$, and that $f_{1}(G)\ll 1$ (the implied constants here are independent on $i$). The result will then follow. Indeed, in this case, $f(G)\ll \sum_{i=1}^{\infty}\frac{{i-1}}{2^{i-1}}\ll 1$.

To this end, first fix $i$ in the range $2\le i\le t$. Clearly we may assume that $A_{i}$ is non-trivial. Let $D=R_{i}$, $S:=\pi_{i}(G)$, and note that $G$ is a large subgroup of a wreath product $R\wr P$, where $R$ is transitive of degree $r:=r_{1}r_{2}\hdots r_{i-1}$, and $P$ is a large subgroup of $D\wr S$. Set $d:=r_{i}$, $s:=r_{i+1}\hdots r_{t}$, and $m:=\max\left\{r,d,s\right\}$. Suppose first that $d\ge 5$ and that $D$ contains the alternating group $\Alt(d)$. (In particular, we are in the ``bottom heavy" situation of Proposition \ref{WreathApp}.) Then $A_i$, being a nontrivial normal subgroup of $D$, also contains $\Alt(d)$. Note that $|A_{i}|\le d^d$. We distinguish two cases. Note throughout that $\log{n}\le \log{m^3}\ll \log{m}$. \begin{enumerate}
\item $s\le 2^{(\log{d})^{2}}$. Then $n=rds\le m_{1}^{2}2^{(\log{m_{1}})^{2}}$, where $m_{1}:=\max\left\{r,d\right\}$. Thus, $\log{n}\le 2\log{m_{1}}+(\log{m_{1}})^{2}\ll (\log{m_{1}})^{2}$. Since $a(R)\ll r$ by Theorem \ref{Cam}, Proposition \ref{WreathApp} Part (i) then implies that $d(G)\ll rs$. Hence, from \ref{MainBoundfi} we deduce 
\begin{align*}f_{i}(G)&\ll \frac{rsd\log{d}\log{m_{1}}}{r^{2}d^{2}s}
=\frac{\log{d}\log{m_{1}}}{rd}
\ll \frac{\log{r}}{r}\le \frac{(i-1)}{2^{i-1}}\end{align*}
since $r\ge 2^{i-1}$, and this gives us what we need.

\item $s> 2^{(\log{d})^{2}}$. Note that $m\in \{r,s\}$ in this case. Set $g(d,s):=\max\left\{1,\frac{d}{\sqrt{\log{s}}}\right\}$. Then 
\begin{align}\label{ASTON}g(d,s)\log{d}&\le d \end{align}
since $\sqrt{\log{s}}>\log{d}$. Now, Theorem \ref{Cam} gives $a(R) \ll r$. Hence, Proposition \ref{WreathApp} Part (ii) gives $d(G)\ll \frac{rg(d,s)s}{\sqrt{\log{s}}}$. Hence, since $n\le m^{3}$, we have 
\begin{align*}f_{i}(G)&\ll \frac{rg(d,s) sd\log{d}\sqrt{\log{m}}}{r^{2}d^{2}s\sqrt{\log{s}}}\\
&= \frac{ g(d,s)\log{d}\sqrt{\log{m}}}{rd\sqrt{\log{s}}}\\
&\le \frac{ d\sqrt{\log{m}}}{rd\sqrt{\log{s}}}&\text{by }(\ref{ASTON}),\\
&\le \frac{\sqrt{\log{r}}}{r}\le \frac{\sqrt{i-1}}{2^{i-1}}&\text{since }m\in\{r,s\}.\end{align*}
This gives us what we need.\end{enumerate}

Next, suppose that either $d\le 4$, or that $D$ does not contain $\Alt(d)$. Then $\log{|A_{i}|}\ll {d}$ by Theorem \ref{Maroti}. Now, $G$ is a large subgroup of $R\wr P$, where $P$ is transitive of degree $ds$. Also, $a(R)\ll r$ by Theorem \ref{Cam}. Then, by Corollary \ref{UsefulCor} we have
$$d(G)\ll \frac{rds}{\sqrt{\log{ds}}}.$$Thus
\begin{align*}f_i(G)\ll \frac{rdsd\sqrt{\log{m}}}{r^{2}d^{2}s\sqrt{\log{ds}}}
= \frac{\sqrt{\log{m}}}{r\sqrt{\log{ds}}}
\le \frac{\sqrt{\log{r}}}{r}\le \frac{\sqrt{i-1}}{2^{i-1}}\end{align*}
and again this gives us what we need.
  
Finally, we deal with the case $i=1$. Here, set $r:=r_{1}$, $s:=r_{2}r_{3}\hdots r_{t}$, and $m=\max\left\{r,s\right\}$. Then $|A_{i}|\le r^{r}$ and $\log{n}\ll \log{m}$. Also, $G$ is a large subgroup of a wreath product $R\wr S$, where $R$ is primitive of degree $r$, and $S$ is transitive of degree $s$. Thus, $a(R)\ll \log{r}$ by Theorem \ref{pyber1}. Thus, Corollary \ref{UsefulCor} implies that $d(G)\le s\log{r}/\sqrt{\log{s}}$, and hence 
$$f_{i}(G)\ll \frac{(\log{r})sr\log{r}\sqrt{\log{m}}}{r^{2}s\sqrt{\log{s}}}=\frac{(\log{r})^{2}\sqrt{\log{m}}}{r\sqrt{\log{s}}}\le \frac{(\log{r})^{5/2}}{r}\ll 1.$$
This completes the proof.\end{proof}

We conclude with an example which shows that the bounds of Theorems \ref{TransTheorem} and \ref{TransOrderTheorem} are asymptotically best possible.
\begin{Example}\label{TransExample} Let $A$ be an elementary abelian group of order $2^{2k-1}$, and write $R$ for the radical of the group algebra $\mathbb{F}_{2}[A]$. Consider the $2$-group $G:=R^{k-1}\rtimes A$.

The largest trivial submodule of $\mathbb{F}_{2}[A]$ is $1$-dimensional, while $\dim{(R^{k-1})}>1$, by \cite[3.2]{KovNew}. Hence, the centraliser $C_{A}(R^{k-1})$ of $R^{k-1}$ in $A$ is a proper characteristic subgroup of $A$; since $A$ is characteristically simple, it follows that $C_{A}(R^{k-1})=1$. Thus, $C_{G}(R^{k-1})=R^{k-1}$, so $Z:=Z(G)=C_{R^{k-1}}(A)$. Again, since the largest trivial submodule of $\mathbb{F}_{p}[A]$ is $1$-dimensional, and $Z$ is nontrivial, it follows that $Z$ has order $2$, and hence $Z$ is the unique minimal normal subgroup of $G$. Let $H$ be a subspace complement to $Z$ in $R^{k-1}$. Then $H$ has codimension $1$ in $R^{k-1}$, and hence has index $2^{2k}$ in $G$. It is also clear that $H$ is core-free in $G$, so $G$ is a transitive permutation group of degree $2^{2k}$.

Next, note that $$\sqrt{2k}\binom{2k}{k}\frac{1}{4^{k}}=\left[\frac{1}{2}\left(\frac{3}{2}\frac{3}{4}\right)\left(\frac{5}{4}\frac{5}{6}\right)\hdots\left(\frac{2k-1}{2k-2}\frac{2k-1}{2k}\right)\right]^{1/2}=\left[\frac{1}{2}\prod_{j=2}^{k}\left(1+\frac{1}{4j(j-1)}\right)\right]^{1/2}.$$ As in the proof of Theorem \ref{usefulposet}, the expression in the middle converges to $b=\sqrt{2/\pi}$, by Wallis' formula. Hence, since the expression on the right is increasing, we conclude that for all $\epsilon>0$, there exists a positive integer $k$ such that $\sqrt{2k}\binom{2k}{k}\frac{1}{4^{k}}\ge b-\epsilon$, that is, $\binom{2k}{k}\ge (b-\epsilon)4^{k}/\sqrt{2k}$.

Now, the derived subgroup $G'$ of $G$ is $R^{k}$, and $G/G'\cong (R^{k-1}/R^{k})\times A$ is elementary abelian of rank $\binom{2k-1}{k-1}+2k-1$, again using \cite[3.2]{KovNew}. Thus, for large enough $k$ we have
\begin{align*}d(G)= \binom{2k-1}{k-1}+2k-1 =\frac{1}{2}\binom{2k}{k}+2k-1\ge \frac{(b-\epsilon )2^{2k}}{2\sqrt{2k}}+2k-1.\end{align*}
Furthermore, $|R^{k-1}|=2^{\sum_{i=k-1}^{2k-1}\binom{2k-1}{i}}=2^{2^{2k-1}-2^{k-2}}\sim 2^{n/2}$. Hence, $|G|\sim 2^{n-1}$, which shows that $d(G)\log{|G|}$ is at least a constant times $n^{2}/\sqrt{\log{n}}$. 
\end{Example}

\newpage \begin{appendices}
\section{Upper bounds for $d(G)$ for some transitive groups of small degree}
The groups $G$ in the right hand column of Table A.1 below are transitive permutation groups of degree $d$, where $d$ is as specified in the left hand column. In Table A.2, the groups are transitive permutation groups of degree $d$ which have at least $f$ $2$-blocks (see Section 1).
  
  \begin{tabular}[t]{|c|p{1.5cm}|}
  \hline
  \multicolumn{2}{ |c| }{Table A.1}\\ 
  \hline
  $d$ & $d(G)\le$ \\
  \hline\hline
  $48$ &  $16$ \\
  \hline
  $64$ &  $20$ \\
  \hline
  $96$ &  $31$ \\
  \hline
  $128$ &  $40$ \\
  \hline
  $192$ &  $57$ \\
  \hline
  $256$ &  $75$ \\
  \hline
  $384$ &  $109$ \\
  \hline
  $512$ &  $145$ \\
  \hline 
  $2^{8}3$ &  $203$ \\
  \hline
  $2^{10}$ &  $271$ \\
  \hline
  $2^{9}3$ &  $392$ \\
  \hline
  $2^{11}$ &  $523$ \\
  \hline
  $2^{10}3$ &  $738$ \\
  \hline
   \end{tabular} \quad
  \begin{tabular}[t]{|c|p{1.5cm}|}
  \hline
  \multicolumn{2}{ |c| }{Table A.1 ctd}\\ 
  \hline
  $d$ & $d(G)\le$ \\
  \hline\hline
  $2^{11}3$ & $1431$ \\
  \hline
  $2^{12}3$ & $2718$ \\
  \hline
  $2^{13}3$ & $5292$ \\
  \hline
  $2^{14}3$ & $10118$ \\
  \hline
  $2{15}3$ & $19770$ \\
  \hline
  $2^{16}3$ & $38002$ \\
  \hline
  $2^{17}3$ & $74467$ \\
  \hline
  $2^{18}3$ & $143750$ \\
  \hline
  $2^{19}3$ & $282317$ \\
  \hline
  $2^{20}3$ & $546854$ \\
  \hline 
  $2^{3}5$ & $9$  \\
  \hline
   $2^{4}5$ & $18$  \\
  \hline
   $2^{5}5$ & $34$ \\
  \hline \end{tabular}\quad
 \begin{tabular}[t]{|c|p{1.5cm}|}
  \hline
  \multicolumn{2}{ |c| }{Table A.1 ctd}\\ 
  \hline
  $d$  & $d(G)\le$ \\
  \hline\hline
   $2^{6}5$  & $66$ \\
  \hline
   $2^{7}5$  & $130$ \\
  \hline
   $2^{8}5$  & $258$ \\
  \hline
   $2^{9}5$ & $514$ \\
  \hline
   $2^{10}5$  & $1026$ \\
  \hline
   $2^{11}5$  & $2050$ \\
  \hline
   $2^{12}5$  & $4098$ \\
  \hline
   $2^{13}5$  & $8194$ \\
  \hline
   $2^{14}5$  & $16386$ \\ \hline
   $2^{15}5$  & $32770$ \\
  \hline
   $2^{16}5$  & $65538$ \\
  \hline
   $2^{2}15$  & $15$  \\
  \hline   \end{tabular} \quad
   \begin{tabular}[t]{|c|p{1.5cm}|}
  \hline
  \multicolumn{2}{ |c| }{Table A.1 ctd}\\ 
  \hline
  $d$ & $d(G)\le $ \\
  \hline\hline
  $2^{3}15$ & $27$  \\
  \hline
   $2^{4}15$ & $52$  \\
  \hline
   $2^{5}15$ & $100$ \\
  \hline
   $2^{6}15$ & $196$ \\
  \hline
   $2^{7}15$ & $388$ \\
  \hline
   $2^{8}15$ & $772$ \\
  \hline
   $2^{9}15$ & $1540$ \\
  \hline
   $2^{10}15$ & $3076$ \\
  \hline
   $2^{11}15$ & $6148$ \\
  \hline
   $2^{12}15$ & $12292$ \\
  \hline
   $2^{13}15$ & $24580$ \\
  \hline
   $2^{14}15$ & $49156$ \\
  \hline \end{tabular}
   
\vspace{5mm}
   \begin{tabular}[t]{|c|p{0.5cm}|p{1.5cm}|}
  \hline
  \multicolumn{3}{ |c| }{Table A.2}\\ 
  \hline
  $d$ & $f$ & $d(G)\le $ \\
  \hline\hline
   $2^{17}5$ & $5$ & $130900$ \\
  \hline
   $2^{18}5$ & $4$ & $257722$ \\
  \hline
   $2^{19}5$ & $4$ & $504220$ \\
  \hline
   $2^{20}5$ & $4$ & $984067$ \\
  \hline
   $2^{21}5$ & $4$ & $1919461$ \\
  \hline
   $2^{22}5$ & $4$ & $3745164$ \\
  \hline
   $2^{23}5$ & $5$ & $7312620$ \\
  \hline
   $2^{24}5$ & $5$ & $14290701$ \\
  \hline
   $2^{25}5$ & $6$ & $27953017$ \\
  \hline
   $2^{26}5$ & $7$ & $54725580$ \\
  \hline
   $2^{15}15$ & $6$& $98308$ \\
  \hline \end{tabular}\quad
  \begin{tabular}[t]{|c|p{0.5cm}|p{1.5cm}|}
  \hline
  \multicolumn{3}{ |c| }{Table A.2 ctd}\\ 
  \hline
  $d$ & $f$ & $d(G)\le$ \\
  \hline\hline
   $2^{16}15$ & $4$& $196612$ \\
  \hline
   $2^{17}15$ & $3$ & $392700$ \\
  \hline
   $2^{18}15$ & $3$ & $773166$ \\
  \hline
   $2^{19}15$ & $3$ & $1512660$ \\
  \hline
   $2^{20}15$ & $3$ & $2952202$ \\
  \hline
   $2^{21}15$ & $3$ & $5758386$ \\
  \hline
   $2^{22}15$ & $3$ & $11235497$ \\
  \hline
   $2^{23}15$ & $3$ & $21937865$ \\
  \hline
   $2^{24}15$ & $3$ & $42872110$ \\
  \hline
   $2^{25}15$ & $3$ & $83859059$ \\
  \hline
    \end{tabular}\quad
  \begin{tabular}[t]{|c|p{0.5cm}|p{2.0cm}|}
  \hline
  \multicolumn{3}{ |c| }{Table A.2 ctd}\\ 
  \hline
  $d$ & $f$ & $d(G)\le$ \\
  \hline\hline
  $2^{26}15$ & $4$ & $164176748$ \\
  \hline
  $2^{27}15$ & $4$ & $321692696$ \\
  \hline
   $2^{28}15$ & $4$ & $630835627$ \\
  \hline
   $2^{29}15$ & $4$ & $1237980292$ \\
  \hline
   $2^{30}15$ & $5$ & $2431149936$ \\
  \hline
   $2^{31}15$ & $5$ & $4777379825$ \\
  \hline
   $2^{32}15$ & $5$ & $9393534359$ \\
  \hline
   $2^{33}15$ & $6$ & $18480443646$ \\
  \hline
   $2^{34}15$ & $7$ & $36376783048$ \\
  \hline
   $2^{35}15$ & $8$ & $71639170628$ \\
  \hline
\end{tabular}
 \end{appendices}

\end{document}